\DeclareMathAlphabet{\mathcal}{LS1}{stixscr}{m}{n}
\newcommand{\A}{\mathscr A}
\newcommand{\F}{\mathcal F}
\newcommand{\I}{\mathcal I}
\newcommand{\J}{\mathcal J}
\newcommand{\U}{\mathcal U}
\newcommand{\C}{\mathcal C}
\newcommand{\V}{\mathcal V}
\newcommand{\M}{\mathcal M}
\newcommand{\N}{\mathcal N}
\newcommand{\K}{\mathcal K}
\newcommand{\w}{\omega}
\newcommand{\IN}{\mathbb N}
\newcommand{\IR}{\mathbb R}
\newcommand{\Ra}{\Rightarrow}
\newcommand{\IQ}{\mathbb Q}
\newcommand{\Bo}{\mathcal{B\!o}}
\newcommand{\Ba}{\mathcal{B\!a}}
\newcommand{\BaI}{\mathcal{B\!a}^\pm\!\I}
\newcommand{\BoI}{\mathcal{B\!o}^\pm\!\I}
\newcommand{\AI}{\A^\curlyvee\!\I}
\newcommand{\defeq}{\overset{\mbox{\tiny\sf def}}=}
\newcommand{\Ker}{\mathrm{Ker}}
\newtheorem{theorem}{Theorem}[section]
\newtheorem{lemma}[theorem]{Lemma}
\newtheorem{corollary}[theorem]{Corollary}
\newtheorem{proposition}[theorem]{Proposition}
\newtheorem{problem}[theorem]{Problem}
\theoremstyle{definition}
\newtheorem{definition}[theorem]{Definition}
\newtheorem{remark}[theorem]{Remark}
\title[Automatic continuity of measurable homomorphisms on topological groups]{Automatic continuity of measurable homomorphisms on \v Cech-complete topological groups}
\author{Taras Banakh}
\address{T.Banakh: Ivan Franko National University of Lviv (Ukraine) and Jan Kochanowski University in Kielce (Poland)}
\email{t.o.banakh@gmail.com}
\keywords{$K$-analytic space,  locally compact topological group, Haar measure, Haar-measurable homomorphism, universally measurable homomorphism, Baire Property, automatic continuity,}
\subjclass[2020]{03E15, 03E75, 22A10, 22D05, 28A05, 54C05, 54C08, 54D45, 54E52, 54H05, 54H11}
\begin{document}

\begin{abstract} We prove that a homomorphism $h:X\to Y$ from a (locally compact)  \v Cech-complete topological group $X$ to a topological group $Y$ is continuous if and only if $h$ is Borel-measurable if and only if $h$ is universally measurable (if and only if $h$ is Haar-measurable). This answers a problem of Kuznetsova and extends a result of Kleppner on the continuity of Haar-measurable homomorphisms between locally compact groups and a result of Rosendal on the continuity of universally measurable homomorphisms between Polish groups. 
\end{abstract}
\maketitle

\section{Introduction}

The problem of automatic continuity of homomorphisms between topological groups traces its history back to Cauchy \cite{Cauchy} who proved that each continuous additive function $f:\IR\to\IR$ is linear and posed the problem of existence of discontinuous additive functions on the real line. This problem was resolved in 1905 by Hamel who introduced the notion of a Hamel basis and using this notion constructed many (namely $2^{\mathfrak c}$) discontinuous additive functions on the real line. In 1913 Fr\'echet \cite{Frechet} proved  that every Lebesgue measurable additive function on the real line is continuous. In the first issue of Fundamenta Mathematicae three papers (of Banach \cite{Banach}, Sierpinski \cite{Sierpinski} and Steinhaus \cite{Steinhaus}) were devoted to the problem of automatic continuity of Lebesgue measurable additive functions on the real line. Those results were further developed in the framework of the  theory of functional equations and inequalities \cite{Kuczma}.

One of classical results of this theory (see \cite[Ch.5]{Chris}, \cite[9.10]{Ke}, \cite{Ros09}) says that a homomorphism $h:X\to Y$ between (locally compact) Polish groups is continuous if and only if $h$ is Borel-measurable (if and only if $h$ is Haar-measurable). A function $f:X\to Y$ from a locally compact topological group $X$ to a topological space $Y$ is {\em Haar-measurable} if for any open set $U\subseteq Y$ the preimage $f^{-1}[U]$ belongs to the $\sigma$-algebra  of measurable sets with respect to a Haar-measure on $X$ (which is known to be unique up to a multiplicative constant). % In \cite{Chris} Christensen proved that under the assumption $2^{\w_1}>2^{\w_0}$ every Borel-measurable homomorphism from an Abelian complete metric group to any topological group is continuous.

In \cite{Klep1}, \cite{Klep2} Kleppner proved that any Haar-measurable homomorphism between locally compact topological groups is continuous.    In \cite{Kuz} Kuznetsova applied  Martin's Axiom to show that every Haar-measurable homomorphism $h:X\to Y$ from a locally compact topological group $X$ to any topological group $Y$ is continuous, and asked whether Martin's Axiom can be removed from her result.  

This indeed can be done as shown by the following theorem, which is one of four  principal results of this paper.

\begin{theorem}\label{t:main-H} Every Haar-measurable homomorphism $h:X\to Y$ from a locally compact topological group $X$ to any topological group $Y$ is continuous.
\end{theorem}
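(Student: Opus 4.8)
The plan is to reduce the statement to continuity of $h$ at the neutral element $e_X$ of $X$, which by translation yields continuity everywhere. Fix a left Haar measure $\mu$ on $X$ and suppose, towards a contradiction, that $h$ is discontinuous at $e_X$, so there is an open neighbourhood $W\ni e_Y$ with $h[U]\not\subseteq W$ for every neighbourhood $U\ni e_X$. Choose a symmetric open neighbourhood $V\ni e_Y$ with $V^4\subseteq W$.

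The first ingredient is the Steinhaus--Weil theorem: every Haar-measurable set $A\subseteq X$ with $\mu(A)>0$ has $A^{-1}A$ a neighbourhood of $e_X$. Since $h$ is Haar-measurable and $V^2$ is open, $A:=h^{-1}[V^2]$ is Haar-measurable, and, as $V$ is symmetric, $A^{-1}A\subseteq h^{-1}[V^4]\subseteq h^{-1}[W]$; were $\mu(A)>0$, this would be a neighbourhood of $e_X$ mapped by $h$ into $W$ --- contradicting the choice of $W$. Hence $\mu\bigl(h^{-1}[V^2]\bigr)=0$. I then replace $X$ by the open $\sigma$-compact subgroup $X_0$ generated by a compact neighbourhood of $e_X$, and $Y$ by the subgroup $h[X_0]$ with its subspace topology: $X_0$ is a neighbourhood of $e_X$, so continuity of $h|_{X_0}$ at $e_X$ gives continuity of $h$ at $e_X$, and passing to a subgroup of $Y$ preserves Haar-measurability of $h$. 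Thus I may assume $X$ is $\sigma$-compact --- hence Lindel\"of, with $\mu$ $\sigma$-finite and Radon --- and that $h$ is surjective.

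Now $X=\bigcup_{y\in Y}h^{-1}[yV]$, and for any $y$ and any $a\in h^{-1}[yV]$ one has $h^{-1}[yV]\subseteq a\,h^{-1}[V^{-1}V]=a\,h^{-1}[V^2]$; so $X$ is covered by left translates of the Haar-null set $h^{-1}[V^2]$, the translates being indexed by the open cover $\{yV:y\in Y\}$ of $Y$. If this cover of $Y$ had a countable subcover, $X$ would be a countable union of Haar-null sets, giving $\mu(X)=0$, which is absurd. So the whole argument reduces to covering $Y=h[X]$ by countably many translates of $V$. When $Y$ is separable metrizable this is immediate --- the setting of Kleppner's and Rosendal's theorems --- and the genuine difficulty, precisely the gap Kuznetsova left open, is that $Y$ is an arbitrary topological group; I expect this to be the main obstacle. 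I would overcome it by showing that $h[X]$ is Lindel\"of, in fact $K$-analytic: exploiting the $\sigma$-compactness of $X$, the Radon regularity of $\mu$, and Haar-measurability to approximate each $h^{-1}[U]$ from within by $\sigma$-compact sets modulo a $\mu$-null set, one should be able to transfer a $K$-analytic structure from $X$ along $h$ to $h[X]$ (alternatively, one passes to a metrizable separable quotient of $Y$ still witnessing the discontinuity). Once a countable subcover is available, the contradiction above completes the proof, and $h$ is continuous.
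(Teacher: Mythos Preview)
Your reductions are sound up to the point you yourself flag: once $X$ is $\sigma$-compact and $h$ surjective, you have $X$ covered by the family $\{h^{-1}[yV]:y\in Y\}$ of Haar-null sets, and the whole proof hinges on extracting a countable subfamily that still covers (or, equivalently, contradicting Haar-measurability). The difficulty you identify is real, but neither of the routes you sketch closes it. Transferring a $K$-analytic or Lindel\"of structure to $h[X]$ cannot work, because you are arguing by contradiction under the assumption that $h$ is \emph{discontinuous}: images of compact sets under $h$ need not be compact, and approximating $h^{-1}[U]$ from inside by $\sigma$-compact sets tells you only about the domain, not about $h[X]$. Likewise, passing to a separable metrizable quotient of $Y$ that still witnesses discontinuity presupposes that $h[X]$ is $\w$-narrow (Guran's theorem), which is exactly what is at stake.

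The paper's proof does not try to make $h[X]$ small. Instead it replaces your cover $\{yV\}_{y\in Y}$ by a \emph{$\sigma$-discrete} open cover $\V=\bigcup_n\V_n$ of $Y$ by sets of small diameter, obtained from a left-invariant continuous pseudometric and the paracompactness of the associated metric quotient. Each $h^{-1}[V]$ is Haar-null by your Steinhaus--Weil argument, and for some $n$ the union $\bigcup_{V\in\V_n}h^{-1}[V]$ has positive measure. At this point the paper invokes a genuinely new ingredient, the theorem of Banakh--Ra\l owski--\.Zeberski (Theorem~\ref{t:BRZ}, a strengthening of the Brzuchowski--Cicho\'n--Grzegorek--Ryll-Nardzewski theorem): any point-finite family of $\I$-small sets with $\I$-large union admits a subfamily whose union is not $\AI$-semimeasurable. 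Applied to $\{h^{-1}[V]:V\in\V_n\}$, this yields a subfamily $\V'\subseteq\V_n$ with $h^{-1}[\bigcup\V']$ non-measurable; but $\bigcup\V'$ is open in $Y$, contradicting Haar-measurability of $h$. This nonmeasurable-union step is the missing idea in your proposal and is precisely what removes Kuznetsova's Martin's Axiom hypothesis.
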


Theorem~\ref{t:main-H} will be applied to prove the continuity of universally measurable homomorphisms on \v Cech-complete groups. 

A topological group $X$ is {\em \v Cech-complete} if its underlying topological space is \v Cech-complete, i.e. $X$ is homeomorphic to a $G_\delta$-subset of some compact Hausdorff space. It is well-known \cite[4.3.26]{Eng} that a metrizable space is \v Cech-complete if and only if its topology is generated by a complete metric. 

A function $f:X\to Y$ between topological spaces is {\em universally measurable} if for every open set $U\subseteq Y$ the preimage $f^{-1}[U]$ is measurable with respect to any probability Radon measure on $X$. 

In \cite{Ros19} Rosendal proved that every universally measurable homomorphism between Polish groups is continuous, thus resolving an old problem of Christensen.  In the following theorem we extend this result of Rosendal to all \v Cech-complete groups.

\begin{theorem}\label{t:main-UM} Every universally measurable homomorphism $h:X\to Y$ from a \v Cech-complete topological group $X$ to an arbitrary topological group $Y$ is continuous.
\end{theorem}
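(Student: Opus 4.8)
The plan is to reduce Theorem~\ref{t:main-UM} to Theorem~\ref{t:main-H} by finding, inside an arbitrary \v Cech-complete topological group $X$, a rich supply of locally compact subgroups on which the restriction of $h$ is Haar-measurable, and then to glue the resulting local continuity back together. The starting observation is that a \v Cech-complete topological group need not be locally compact, so we cannot invoke Theorem~\ref{t:main-H} directly; however, a \v Cech-complete group is a Baire space, and more is true — by the classical structure theory (going back to work on \v Cech-complete and feathered groups), a \v Cech-complete topological group $X$ contains a compact subgroup $K$ such that the quotient space $X/K$ is completely metrizable (in fact $X/K$ is a \v Cech-complete, hence completely metrizable, group if $K$ is normal; in general one works with the coset space). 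This lets us factor the problem: continuity of $h$ on $X$ will follow from continuity of $h$ on $K$ together with continuity of the induced map on a completely metrizable quotient.

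First I would handle the compact piece: $K$ is a compact, hence locally compact, topological group, and the restriction $h\restriction K$ is universally measurable, in particular measurable with respect to the normalized Haar measure of $K$, so $h\restriction K$ is Haar-measurable and Theorem~\ref{t:main-H} gives that $h\restriction K$ is continuous. Second I would treat the metrizable quotient: here the goal is to show that universal measurability of $h$ on $X$ descends to an appropriate measurability property of the induced map $X/K\to Y$ (or of $h$ along a completely metrizable cross-section), and then apply the Polish-group case of Rosendal's theorem — or, more self-containedly, apply Theorem~\ref{t:main-H} again after noting that a completely metrizable group which is additionally locally compact is covered, while the general completely metrizable case is exactly Rosendal's result that we are extending. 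To make the paper self-contained one would instead want to push the argument through the Baire-category machinery: a universally measurable subset of a \v Cech-complete group has the Baire property modulo a meager set in a strong enough sense (via the fact that \v Cech-complete spaces are Baire and that Radon measures on them are plentiful), so $h^{-1}[U]$ has the Baire property for every open $U\subseteq Y$, and then a Pettis-type argument shows that a homomorphism with the Baire property between a Baire topological group and any topological group is continuous.

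The cleanest route, and the one I would actually carry out, is therefore: (1) show every universally measurable subset of a \v Cech-complete group has the Baire property — this uses that \v Cech-complete spaces carry enough Radon measures, or alternatively a direct category argument exploiting the $G_\delta$-embedding into a compact space; (2) deduce that $h$ is Baire-measurable, i.e. $h^{-1}[U]$ has the Baire property for each open $U$; (3) invoke the Pettis/Banach–Kuratowski–Ulam theorem: in a Baire topological group, for any neighborhood basis the preimages $h^{-1}[V]$ are non-meager with the Baire property, so $h^{-1}[V]\cdot h^{-1}[V]^{-1}$ is a neighborhood of the identity, which forces continuity of $h$ at $e$ and hence everywhere by the homomorphism property. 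The role of Theorem~\ref{t:main-H} in this scheme is to supply, via the locally compact case, the bridge lemma that a universally measurable (equivalently Haar-measurable) set behaves well under the relevant measures; concretely, one restricts to compact subgroups to get Haar-measurability for free and then transports.

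The main obstacle I anticipate is step~(1): establishing that universal measurability implies the Baire property on a \v Cech-complete, possibly non-metrizable and non-$\sigma$-compact, group. For Polish spaces this is classical (universally measurable sets are precisely those measurable with respect to every Borel probability measure, and one uses a category argument with a suitable measure), but in the non-metrizable \v Cech-complete setting one must be careful about which Radon measures are available and whether the Baire property can be localized — this is presumably where the bulk of the technical work and the reliance on Theorem~\ref{t:main-H} (through locally compact, in particular compact, subgroups) will be concentrated. Once $h$ is known to be Baire-measurable, the passage to continuity via Pettis' theorem is routine, since \v Cech-complete groups are Baire and the homomorphism property upgrades continuity at the identity to global continuity.
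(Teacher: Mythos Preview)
Your committed ``cleanest route'' has two genuine gaps. Step~(1) --- that every universally measurable subset of a \v Cech-complete group has the Baire Property --- is not a theorem and is not what the paper proves; universal measurability and the Baire Property are independent regularity notions, and no such implication is available in ZFC. Indeed the paper's BP-result, Theorem~\ref{t:main-BP}, carries an extra $\omega$-narrowness hypothesis on $X$, and whether that hypothesis can be dropped is left as an open problem, so the paper certainly does not reduce Theorem~\ref{t:main-UM} to a Baire-property argument. More fundamentally, your step~(3) simply asserts that the preimages $h^{-1}[V]$ are \emph{non-meager} and then invokes Pettis; but when $Y$ is an arbitrary topological group --- possibly not $\omega$-narrow, not second-countable --- there is no a priori reason $h^{-1}[V]$ should be non-meager (or non-Haar-null, or $\mathcal I$-positive for any given ideal). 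Establishing exactly this ``largeness'' of $h^{-1}[V]$ is the crux of the whole proof, and your outline contains no mechanism for it.

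Your earlier sketch --- compact $K$ with metrizable quotient $X/K$, apply Theorem~\ref{t:main-H} on $K$ and Rosendal on the quotient --- is much closer to what the paper does, but is also incomplete: the quotient $X/K$ is completely metrizable yet Polish only when $X$ is $\omega$-narrow, and Rosendal's theorem is for Polish groups. The paper first reduces to the $\omega$-narrow case via a $k$-space argument (restrict to closures of $\sigma$-compact subgroups), then uses the compact-by-Polish structure of $\omega$-narrow \v Cech-complete groups together with Theorem~\ref{t:main-H} on the compact kernel to pass to a Polish quotient. The step you are missing entirely is supplied by Lemma~\ref{l:Haar}: in the Polish case $h^{-1}[V]$ is shown not to be left Haar-null, via the Brzuchowski--Cicho\'n--Grzegorek--Ryll-Nardzewski nonmeasurable-union theorem, and a lemma of Rosendal then forces the image $h[X]$ to be $\omega$-narrow, after which Guran's theorem reduces everything to Rosendal's Polish result.
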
 

Also we prove an analogous continuity criterion for (universally) BP-measurable homomorphisms on $\w$-narrow \v Cech-complete groups.

A topological group $X$ is {\em $\w$-narrow} if for every nonempty open set $U$ in $X$ there exists a countable set $C\subseteq X$ such that $X=CU=UC$. This important class of topological groups was introduced and studied by Guran \cite{Guran}, see also \cite[\S3.4]{AT}.

A function $f:X\to Y$ between topological spaces is called {\em BP-measurable} if for any open set $U\subseteq Y$ the preimage $f^{-1}[U]$ belongs to the $\sigma$-algebra of sets with the Baire Property in $X$. This $\sigma$-algebra is defined as the smallest $\sigma$-algebra that contains all open and all meager subsets of $X$.
A classical result of Pettis \cite{Pettis} implies that {\em any BP-measurable homomorphism $h:X\to Y$ from a Baire topological group to an $\w$-narrow topological group $Y$ is continuous}, see Theorem 2.2 in \cite{Ros09}. Our third principal result is the following automatic continuity criterion.

\begin{theorem}\label{t:main-BP} Every BP-measurable homomorphism $h:X\to Y$ from an $\w$-narrow \v Cech-complete topological group $X$ to any topological group $Y$ is continuous.
\end{theorem}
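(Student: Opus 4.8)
The plan is to deduce this from Theorem~\ref{t:main-UM}, or more precisely, to run the BP-analogue of whatever argument proves Theorem~\ref{t:main-UM}, replacing "Radon measure" throughout by "Baire category" and using the Pettis theorem in place of the Steinhaus-type lemma. The starting observation is that a BP-measurable homomorphism $h:X\to Y$ need not have BP-measurable composition with an arbitrary continuous map into a third group, but since any topological group $Y$ embeds into a product of metrizable (indeed, first-countable) groups, it suffices to treat the case where $Y$ is metrizable: if $p_\alpha:Y\to Y_\alpha$ are continuous homomorphisms to metrizable groups separating points of $Y$, then $h$ is continuous iff each $p_\alpha\circ h$ is continuous, and each $p_\alpha\circ h$ is again BP-measurable. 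So assume $Y$ is metrizable.

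Next I would use $\w$-narrowness of $X$ to further reduce to the case where $Y$ is separable metrizable, hence Polish after completion. Indeed, the image $h[X]$ is an $\w$-narrow subgroup of the metrizable group $Y$, and $\w$-narrow metrizable groups are separable; so we may replace $Y$ by the closure of $h[X]$ and assume $Y$ is a separable metrizable, and then a Polish, group. Now the key structural input: a \v Cech-complete topological group is a Baire space, and more than that, it is "almost metrizable" in Pasynkov's sense — it contains a compact subgroup $K$ such that $X/K$ is metrizable and the quotient map $q:X\to X/K$ is open and perfect. Since $X$ is \v Cech-complete and $\w$-narrow, $X/K$ is a \v Cech-complete $\w$-narrow metrizable group, hence a separable completely metrizable group, i.e. a Polish group. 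The composition $h$ need not factor through $q$, but I would instead argue locally: pick a neighborhood base and use that $h$ restricted to $q^{-1}[V]$ for small $V$ is controlled.

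The cleaner route, and the one I would actually carry out, is to reduce directly to Rosendal's Polish-group theorem (quoted in the excerpt) via the factor $X/K$. Fix a countable local base $(V_n)$ at the identity of $Y$. For the Pettis/Banach–Kuratowski argument it is enough to show $h$ is continuous at the identity, i.e. that for each $n$ the set $h^{-1}[V_n]$ is non-meager with the Baire Property at the identity; BP-measurability gives the Baire Property, and non-meagerness follows because $X=\bigcup_{y}h^{-1}[V_n y]$ is a countable (by $\w$-narrowness, applied to the Polish-group image) union of translates and $X$ is Baire. Then Pettis's theorem gives that $(h^{-1}[V_n])(h^{-1}[V_n])^{-1}\supseteq h^{-1}[V_nV_n^{-1}]$ is a neighborhood of the identity, and letting $n\to\infty$ yields continuity at $1$, hence everywhere. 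The only place \v Cech-completeness (beyond Baireness) is genuinely needed is to guarantee that the image group, after the metrizability reductions, is Polish so that Rosendal's result — or directly the countability of the covering family — applies; that is the step I expect to be the main obstacle, since a general \v Cech-complete $\w$-narrow group is Polish only after quotienting by a compact subgroup, and one must check the homomorphism behaves well with respect to that quotient. I would handle this by noting that $K$ is compact, hence $h[K]$ is a compact, so in particular BP-measurable-image, subgroup of $Y$, and that continuity of $h$ on the \v Cech-complete metrizable (Polish) group $X/K$ together with continuity of $h|_K$ — which is automatic since $K$ is compact and $h|_K$ is BP-measurable into a metrizable group, again by Pettis — assembles to continuity of $h$ on all of $X$ by a standard open-mapping/perfect-map argument.
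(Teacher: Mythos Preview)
Your argument has a circular gap at the crucial step. You write that ``the image $h[X]$ is an $\w$-narrow subgroup of the metrizable group $Y$,'' and later that the covering $X=\bigcup_y h^{-1}[V_n y]$ is countable ``by $\w$-narrowness, applied to the Polish-group image.'' But $\w$-narrowness of $X$ transfers to $h[X]$ (with the subspace topology from $Y$) only along \emph{continuous} homomorphisms: for an open $W\subseteq Y$ the preimage $h^{-1}[W]$ is not open in $X$, so the hypothesis on $X$ gives nothing. Thus you are assuming exactly what the Pettis argument needs to conclude continuity. (A secondary issue: the assertion that an arbitrary topological group $Y$ embeds into a product of metrizable groups is also false in general---this characterizes $\w$-balanced groups---so the very first reduction already requires a side condition on $Y$ that is not available.) The same circularity reappears in the final paragraph: to get continuity of $h{\restriction}_K$ by Pettis you again need $Y$ (or $h[K]$) to be $\w$-narrow.

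What you have effectively rederived is precisely the classical Pettis result quoted in the introduction: a BP-measurable homomorphism from a Baire group to an \emph{$\w$-narrow} target is continuous. The content of Theorem~\ref{t:main-BP} is removing the $\w$-narrowness assumption on $Y$, and this is where the paper's machinery enters. The paper shows (Theorem~\ref{t:Cech}) that an $\w$-narrow \v Cech-complete group is Baire, $K$-analytic and countably cellular, verifies (Lemma~\ref{l:M-fBb}) that the meager ideal $\M$ then satisfies the hypotheses of Theorem~\ref{t:KA3}, and applies that theorem. The key device replacing your missing countable cover is Theorem~\ref{t:BRZ} (Banakh--Ra\l owski--\.Zeberski): one takes a $\sigma$-discrete open cover of $Y$ by sets of small diameter (available for any $Y$ via a left-invariant pseudometric), obtains a \emph{point-finite} family $\{h^{-1}[V]\}$ of $\I$-small sets covering an $\I$-positive set, and extracts a subfamily whose union fails $\AI$-semimeasurability, contradicting BP-measurability of $h$. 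This is what substitutes for the countable-cover-plus-Baire step that your argument cannot supply.
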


This theorem will be applied to show that every universally BP-measurable homomorphism on a \v Cech-complete group is continuous. A function $f:X\to Y$ between topological spaces is {\em universally BP-measurable} if for every open set $U\subseteq Y$ the preimage $f^{-1}[U]$ is {\em universally BP-measurable} in the sense that for every closed subset $F\subseteq X$ the set $F\cap f^{-1}[U]$ has the Baire Property in $F$.

\begin{theorem}\label{t:main-uBP} Every universlly BP-measurable homomorphism $h:X\to Y$ from a \v Cech-complete topological group $X$ to any topological group $Y$ is continuous.
\end{theorem}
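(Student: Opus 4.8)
The plan is to derive Theorem~\ref{t:main-uBP} from Theorem~\ref{t:main-BP} by exploiting the structure of \v Cech-complete groups. Recall that every \v Cech-complete topological group $X$ is \emph{feathered}: it contains a compact subgroup $K$ such that the quotient space $X/K$ is metrizable (see \cite[\S4.3]{AT}); let $q\colon X\to X/K$ be the quotient homomorphism, which is continuous, open, and perfect, since its fibers are cosets of the compact group $K$. The first observation is that $h|_K$ is continuous: $K$ is compact, hence $\w$-narrow and \v Cech-complete, and $h|_K$ is BP-measurable because, for open $W\subseteq Y$, the set $K\cap h^{-1}[W]$ has the Baire property in $K$ by the universal BP-measurability of $h$ applied to the closed set $F=K$; so Theorem~\ref{t:main-BP} applies. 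More generally, for any countable $S\subseteq X$ the closed subgroup $H_S:=\overline{\langle K\cup S\rangle}\supseteq K$ has $q[H_S]$ contained in the closure of the countable subgroup $q[\langle S\rangle]$, so $q[H_S]$ is separable and metrizable; since $q|_{H_S}$ is a perfect map onto a Lindel\"of space, $H_S$ is Lindel\"of, hence $\w$-narrow, and it is \v Cech-complete as a closed subspace of $X$; the same argument as for $K$ shows $h|_{H_S}$ is BP-measurable, so $h|_{H_S}$ is continuous by Theorem~\ref{t:main-BP}.

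It remains to prove continuity of $h$ at $1_X$. Suppose not: fix an open neighborhood $V\ni 1_Y$ with $h^{-1}[V]$ not a neighborhood of $1_X$, so $1_X\in\overline{h^{-1}[Y\setminus V]}$, and choose a net $(x_\alpha)$ in $X$ with $x_\alpha\to 1_X$ and $h(x_\alpha)\notin V$ for all $\alpha$. Using the continuity of $h|_K$, pick an open $G'\subseteq X$ with $G'\cap K=\{k\in K:h(k)\in V\}$ and then (regularity of the topological group $X$) an open neighborhood $U_1\ni 1_X$ with $\overline{U_1}\subseteq G'$, so that $\overline{U_1}\cap K\subseteq h^{-1}[V]$. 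Fix a decreasing local base $(O_n)_{n\in\w}$ at the identity of $X/K$, and, since each $U_1\cap q^{-1}[O_n]$ is a neighborhood of $1_X$, extract indices so that the sequence $x_n:=x_{\alpha_n}$ satisfies $x_n\in U_1\cap q^{-1}[O_n]$ for every $n$. Then $q(x_n)\to 1_{X/K}$; by perfectness of $q$ the sequence $(x_n)$ lies in the compact set $q^{-1}\big[\{q(x_n):n\in\w\}\cup\{1_{X/K}\}\big]$, hence it has a cluster point $p$ in $X$. Continuity of $q$ gives $q(p)=1_{X/K}$, i.e.\ $p\in K$, and $p\in\overline{U_1}$; therefore $p\in\overline{U_1}\cap K$, so $h(p)\in V$.

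On the other hand, set $H:=\overline{\langle K\cup\{x_n:n\in\w\}\rangle}$, which is one of the subgroups $H_S$ above, so $h|_H$ is continuous. The point $p\in K\subseteq H$ is a cluster point of the sequence $(x_n)\subseteq H$ inside $H$, and $h|_H$ is continuous at $p$; hence for every neighborhood $V'$ of $1_Y$ the set $H\cap h^{-1}[h(p)V']$ is a neighborhood of $p$ in $H$ and so contains $x_n$ for infinitely many $n$. As $h(x_n)\notin V$ this gives $h(p)V'\cap(Y\setminus V)\neq\emptyset$ for every such $V'$, whence $1_Y\in\overline{h(p)^{-1}(Y\setminus V)}=h(p)^{-1}\overline{Y\setminus V}=h(p)^{-1}(Y\setminus V)$, i.e.\ $h(p)\notin V$. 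This contradicts $h(p)\in V$, so $h$ is continuous.

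The bulk of the work is the reduction above; the delicate point is that $X$ need not be first-countable at $1_X$, because the feather $K$ may fail to be metrizable, so one cannot simply thin the net $(x_\alpha)$ to a \emph{sequence} converging to $1_X$ and invoke Theorem~\ref{t:main-BP} on a countably generated subgroup. The device that gets around this is the choice of the neighborhood $U_1$ together with the already-established continuity of $h|_K$ (itself an instance of Theorem~\ref{t:main-BP}): it forces every cluster point of the extracted sequence into $\overline{U_1}\cap K\subseteq h^{-1}[V]$, which is precisely what collides with the continuity of $h$ on the $\w$-narrow subgroup $H$. Beyond this, only standard facts are needed and can be quoted rather than proved: that \v Cech-complete topological groups are feathered, that perfect preimages of Lindel\"of (in particular second-countable) spaces are Lindel\"of, that Lindel\"of topological groups are $\w$-narrow, and that countably generated subgroups of metrizable groups have separable closures.
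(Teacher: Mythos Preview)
Your argument ultimately works, but it contains a gap and is considerably more involved than the paper's.

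\textbf{The gap.} You call $q\colon X\to X/K$ the ``quotient homomorphism'' and claim that $q[H_S]$ lies in the closure of the ``countable subgroup $q[\langle S\rangle]$''. But the feathering theorem for \v Cech-complete groups does not guarantee that the compact subgroup $K$ can be taken \emph{normal}; in general $X/K$ is only a coset space, $q$ is not a homomorphism, and $\langle K\cup S\rangle$ need not lie in $\overline{\langle S\rangle K}$. For a concrete failure, take $X=(S_3)^{\w_1}$, $K=\langle(12)\rangle\times(S_3)^{\w_1\setminus\{0\}}$, and $S=\{((13),1,1,\dots)\}$: then $\langle K\cup S\rangle=X$ while $\langle S\rangle K$ is a proper closed subset. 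Your conclusion that $H_S$ is $\w$-narrow is nevertheless correct, because $\langle K\cup S\rangle=\bigcup_{n\in\w}(K\cup S\cup S^{-1}\cup\{1\})^{\cdot n}$ is $\sigma$-compact, hence $\w$-narrow, and closures of $\w$-narrow subgroups are $\w$-narrow.

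\textbf{Comparison with the paper.} Once the $\sigma$-compactness fix is in hand, your net-extraction and cluster-point apparatus becomes unnecessary. The paper's proof is a three-line reduction: \v Cech-complete spaces are $k$-spaces, so it suffices that $h{\restriction}_C$ be continuous for every compact $C\subseteq X$; the subgroup generated by such a $C$ is $\sigma$-compact, its closure $\bar H$ is therefore $\w$-narrow and (being closed in $X$) \v Cech-complete, the restriction $h{\restriction}_{\bar H}$ is BP-measurable by universal BP-measurability, and Theorem~\ref{t:main-BP} finishes. Your route replaces this $k$-space reduction by a direct attack on continuity at $1_X$ via the feathered structure, the extraction of a sequence from a net using the metrizability of $X/K$, and a compactness argument to pin down a cluster point in $K$. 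Both proofs ultimately invoke Theorem~\ref{t:main-BP} on an $\w$-narrow \v Cech-complete subgroup; yours reaches that invocation by a longer path with no evident gain.
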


\begin{proof} Since \v Cech-complete spaces are $k$-spaces \cite[3.9.5]{Eng}, the continuity of $h$ will follow as soon as we check that for every compact subset $K\subseteq X$ the restriction $h{\restriction}_K$ is continuous. Given any compact subset $K\subseteq X$, consider the $\sigma$-compact subgroup $H$ of $X$ generated by the compact set $K$. By \cite[3.4.6]{AT}, the $\sigma$-compact group $H$ is $\w$-narrow and so is its closure $\bar H$ in $X$, see \cite[3.4.9]{AT}. Since closed subspaces of \v Cech-complete spaces are \v Cech-complete \cite[3.9.6]{Eng}, the topological group $\bar H$ is \v Cech-complete. The universal BP-measurability of the homomorphism $h$ implies the BP-measurability of the restriction $h{\restriction}_{\bar H}:\bar H\to Y$. By Theorem~\ref{t:main-BP}, the homomorphism $h{\restriction}_{\bar H}$ is continuous and so is the restriction $h{\restriction}_K$.
\end{proof}

A function $f:X\to Y$ between topological spaces is called {\em Borel-measurable} if for every open set $U\subseteq Y$ the preimage $f^{-1}[U]$ is a Borel subset of $X$. Since Borel-measurable functions are both universally measurable and universally BP-measurable, Theorems~\ref{t:main-UM} or \ref{t:main-uBP} imply our last principal result.

\begin{corollary}\label{c:main} Every Borel-measurable homomorphism $h:X\to Y$ from a \v Cech-complete topological group $X$ to any topological group $Y$ is continuous.
\end{corollary}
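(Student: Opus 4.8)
The plan is to deduce this corollary directly from either Theorem~\ref{t:main-UM} or Theorem~\ref{t:main-uBP}, since no new work involving the group structure is required: once one knows that Borel-measurable functions fall under the hypotheses of those theorems, continuity is immediate. Concretely, I would verify the two elementary implications (i) Borel-measurable $\Rightarrow$ universally measurable, and (ii) Borel-measurable $\Rightarrow$ universally BP-measurable, and then invoke the corresponding theorem.

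For (i), let $h:X\to Y$ be Borel-measurable and let $U\subseteq Y$ be open, so that $h^{-1}[U]$ is a Borel subset of $X$. Given any Radon probability measure $\mu$ on $X$, the $\sigma$-algebra of $\mu$-measurable sets contains the Borel $\sigma$-algebra of $X$, hence $h^{-1}[U]$ is $\mu$-measurable; as $\mu$ is arbitrary, $h$ is universally measurable, and Theorem~\ref{t:main-UM} gives the continuity of $h$.

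For (ii), again take $U\subseteq Y$ open and a closed set $F\subseteq X$. Then $F\cap h^{-1}[U]$ is the trace on $F$ of the Borel set $h^{-1}[U]\subseteq X$, hence is Borel in the subspace $F$; since in any topological space the sets with the Baire Property form a $\sigma$-algebra containing the open sets and therefore all Borel sets, $F\cap h^{-1}[U]$ has the Baire Property in $F$. Thus $h$ is universally BP-measurable and Theorem~\ref{t:main-uBP} applies, giving the same conclusion.

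There is no genuine obstacle here; the substance of the result is carried entirely by the earlier theorems. The only point one must not overlook is that universal BP-measurability (and the analogous role of subspaces in universal measurability via restrictions to closed subgroups in the proof of Theorem~\ref{t:main-uBP}) is formulated relative to subspaces, so one uses that Borelness — and hence the Baire Property — is preserved under passing to traces on subspaces. With that remark, Corollary~\ref{c:main} follows at once.
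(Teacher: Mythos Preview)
Your proposal is correct and follows exactly the paper's approach: the paper simply notes that Borel-measurable functions are both universally measurable and universally BP-measurable, so Theorems~\ref{t:main-UM} or \ref{t:main-uBP} give the result. You have merely spelled out in detail the two elementary implications that the paper states in one line.
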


\begin{remark} Corollary~\ref{c:main} generalizes an old result of Christensen \cite{Chris71} who proved that under $2^{\w_1}>2^{\w_0}$ every Borel-measurable homomorphism $h:X\to Y$ from a first-countable \v Cech-complete Abelian topological group $X$ to any topological group $Y$ is continuous.
\end{remark}

We do not know whether the $\w$-narrowness of $X$ can be removed from Theorem~\ref{t:main-BP}.

\begin{problem} Let $h:X\to Y$ be a BP-measurable homomorphism from a \v Cech-complete topological group $X$ to a topological group $Y$. Is $h$ continuous?
\end{problem}

\begin{remark} The \v Cech-completeness cannot be removed from Theorems~\ref{t:main-UM}, \ref{t:main-BP}, \ref{t:main-uBP} or Corollary~\ref{c:main}: the homomorphism $$h:\IQ+\IQ\sqrt{2}\to\IQ+\IQ\sqrt{3},\quad  h:x{+}y\sqrt{2}\,\mapsto\, x{+}y\sqrt{3},$$between the countable dense subgroups $\IQ+\IQ\sqrt{2}$ and $\IQ+\IQ\sqrt{3}$ of the real line is Borel-measurable and discontinuous.
\end{remark}

Theorems~\ref{t:main-H} and \ref{t:main-BP} will be deduced from more powerful Theorems~\ref{t:KA2}, \ref{t:KA3} on the automatic continuity of $\AI$-measurable homomorphisms on Baire $K$-analytic groups, proved in Section~\ref{s:5} after some preliminary work made in Sections~\ref{s:2}--\ref{s:4}. In Section~\ref{s:6} we characterize Baire $K$-analytic groups as \v Cech-complete groups which are Lindel\"of, $\w$-narrow or countably cellular. Sections~\ref{s:7}, \ref{s:8}, \ref{s:9} contain the proofs of Theorems~\ref{t:main-BP}, \ref{t:main-H}, \ref{t:main-UM}, respectively.

\section{$K$-analytic spaces}\label{s:2}

All topological spaces considered in this paper are assumed to be Hausdorff.
%A topological group $X$ is called {\em $K$-analytic} if its underlying topological space is $K$-analytic. The class of $K$-analytic spaces is a nice class of topological spaces that includes all compact Hausdorff spaces and has many nice inheritance properties. 

A Tychonoff space $X$ is called
\begin{itemize}
\item {\em Lindel\"of} if every open cover of $X$ has a countable subcover;
\item {\em \v Cech-complete} if $X$ is homeomorphic to a $G_\delta$-set in some compact Hausdorff space;
\item {\em $K$-analytic} if there exists a continuous surjective map $f:Z\to X$ defined on a Lindel\"of \v Cech-complete space $Z$;
\item {\em analytic} if there exists a continuous surjective map $f:Z\to X$ defined on a Polish space $Z$;
\item {\em cosmic} if there exists a continuous surjective map $f:Z\to X$ defined on a separable metrizable space $Z$.
\end{itemize}

Theorem 2.6.1 in \cite{RJ} implies that in the class of Tychonoff spaces our definition of a $K$-analytic space is equivalent to the original definition (via upper semicontinuous compact-valued maps) given in \cite{RJ}.

A subset $A$ of a topological space $X$ is called {\em $K$-analytic} if $A$ endowed with the subspace topology is a $K$-analytic space.

In the following lemma we collect some properties of $K$-analytic spaces that will be used in the subsequent proofs.

\begin{lemma}\label{l:KA}
\begin{enumerate}\itemsep=2pt
\item A $K$-analalytic space $X$ is analytic if and only if $X$ is cosmic.
\item Any $K$-analytic subspace of any Tychonoff space $X$ has the Baire property in $X$. 
\item A subspace $X$ of a (compact) Hausdorff space $Y$ is $K$-analytic (if and) only if there exists a countable family $(F_s)_{s\in\w^{<\w}}$ of closed subsets of $Y$ such that $X=\bigcup_{s\in\w^\w}\bigcap_{n\in\w}F_{s{\restriction}_n}$.
\item For any continuous map $f:X\to Y$ between $K$-analytic spaces and any $K$-analytic subset $A\subseteq Y$ the preimage $f^{-1}[A]$ is $K$-analytic.
\end{enumerate}
\end{lemma}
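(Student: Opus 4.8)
The plan is to prove the four parts of Lemma~\ref{l:KA} by reducing each to the characterization in part~(3), which is the representation of a $K$-analytic subspace via a Suslin scheme of closed sets. I expect part~(3) to be the technical heart of the lemma, so I would prove it first (in both directions) and then derive (1), (2), (4) as consequences.

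\textbf{Part (3).} For the ``only if'' direction, suppose $X\subseteq Y$ is $K$-analytic, so there is a continuous surjection $f:Z\to X$ from a Lindel\"of \v Cech-complete space $Z$. Embed $Z$ as a dense $G_\delta$-subset of a compact Hausdorff space $bZ$; write $Z=\bigcap_{n\in\w}U_n$ with $U_n\subseteq bZ$ open. Since $Z$ is Lindel\"of and \v Cech-complete, it is a continuous image under a proper/perfect-like argument; more directly, one uses that a Lindel\"of \v Cech-complete space admits a complete sequence of countable open covers, so I would fix for each $n$ a countable open cover $\mathcal V_n=\{V_{n,k}:k\in\w\}$ of $Z$ by sets with compact closure inside $U_n$ (in $bZ$), forming a ``complete'' sequence in the sense that any filter base containing one member from each $\mathcal V_n$ clusters in $Z$. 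Then for $s\in\w^{<\w}$ put $W_s=\bigcap_{i<|s|}V_{i,s(i)}$ and let $F_s$ be the closure in $Y$ of $f[\overline{W_s}\cap Z]$ (using that $\overline{W_s}\cap Z$ is a subset of $Z$; completeness guarantees $\bigcap_n \overline{W_{s\restriction n}}$ is a nonempty compact subset of $Z$ whenever the $W_{s\restriction n}$ are nonempty). A standard verification then gives $X=f[Z]=\bigcup_{s\in\w^\w}\bigcap_n F_{s\restriction n}$. For the ``if'' direction, given such a family $(F_s)_{s\in\w^{<\w}}$ with $Y$ compact Hausdorff, the set $Z=\{(x,\sigma)\in Y\times\w^\w: x\in F_{\sigma\restriction n}\ \forall n\}$ is a closed subset of $Y\times\w^\w$ when the $F_s$ are taken monotone in $s$ (replace $F_s$ by $\bigcap_{i\le|s|}F_{s\restriction i}$), hence $Z$ is \v Cech-complete, and it is Lindel\"of because $\w^\w$ is; the coordinate projection maps $Z$ continuously onto $X$.

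\textbf{Parts (1), (2), (4) from (3).} For (4): given $f:X\to Y$ continuous between $K$-analytic spaces and $A\subseteq Y$ a $K$-analytic subset, write $A$ via part~(3) (after embedding $Y$ in a compactification) as $\bigcup_{\sigma}\bigcap_n F_{\sigma\restriction n}$; then $f^{-1}[A]=\bigcup_\sigma\bigcap_n f^{-1}[F_{\sigma\restriction n}]$, and each $f^{-1}[F_{\sigma\restriction n}]$ is closed in $X$, so $f^{-1}[A]$ is of Suslin type over the closed sets of a compactification of $X$; now apply the ``if'' direction of (3) again — but care is needed since $X$ itself need only be $K$-analytic, not compact, so I would instead argue directly that the Suslin operation applied to closed subsets of a $K$-analytic space yields a $K$-analytic space (the fibered-product-with-$\w^\w$ construction goes through verbatim, with $Z$ now Lindel\"of \v Cech-complete as a closed subspace of $(\text{Lindel\"of \v Cech-complete})\times\w^\w$). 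For (2): a $K$-analytic subspace $A$ of a Tychonoff space $X$ has the Baire property in $X$ — embed $X$ in a compactification $bX$, apply (3) to write $\overline{A}^{\,bX}\cap$-scheme, and invoke the classical theorem that sets obtained by the Suslin operation from closed (hence Baire-property) sets have the Baire property, together with the fact that the Baire property is preserved under intersecting with $X$. For (1): a $K$-analytic space is analytic iff it is cosmic — the nontrivial direction is that a cosmic $K$-analytic space is analytic; here one uses that a cosmic space has a countable network, so in the representation $f:Z\twoheadrightarrow X$ with $Z$ Lindel\"of \v Cech-complete one can, using the countable network on $X$ and hence on the graph, replace $Z$ by a second-countable (hence Polish, being \v Cech-complete and metrizable-via-network) space mapping onto $X$; the converse is trivial since Polish spaces are both Lindel\"of \v Cech-complete and separable metrizable.

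\textbf{Main obstacle.} The crux is the ``only if'' direction of part~(3): converting the abstract data of a continuous surjection from a Lindel\"of \v Cech-complete space into an explicit Suslin scheme of \emph{closed} sets in a compactification of $Y$. The delicate point is to arrange a countable ``complete'' sieve on $Z$ so that the decreasing intersections $\bigcap_n\overline{W_{\sigma\restriction n}}$ are automatically nonempty and compact inside $Z$ whenever all finite stages are nonempty — this is exactly where Lindel\"ofness (to keep the covers countable) and \v Cech-completeness (to get the completeness/clustering property) are both used, and getting the bookkeeping right so that the resulting $F_s$ are closed in $Y$ and their Suslin union is exactly $X$ is the part that needs genuine care rather than routine manipulation. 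I would lean on Theorem~2.6.1 of \cite{RJ} and the equivalence with the usc-compact-valued-map definition to streamline this, citing \cite{RJ} for the scheme representation where convenient.
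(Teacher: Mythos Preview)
Your approach is correct but differs from the paper's both in emphasis and in the argument for part (4). The paper treats (1)--(3) as essentially known, dispatching each with a one-line citation to \cite{RJ} (Theorems 5.5.1, 2.5.2, 2.5.4 and Corollary 2.9.4); your detailed unpacking of the Suslin-scheme representation in (3) is sound and is precisely the content behind those citations, so there is no disagreement there, only a difference in how much is delegated to the literature. The genuine divergence is in (4). The paper does \emph{not} route through (3): it argues directly via a fibered product, taking Lindel\"of \v Cech-complete spaces $P,Q$ with continuous surjections $\varphi:P\to X$, $\psi:Q\to A$, forming the closed set $F=\{(p,q)\in P\times Q: f(\varphi(p))=\psi(q)\}$, and observing that $f^{-1}[A]$ is the image of $F$ under $(p,q)\mapsto\varphi(p)$; the only nontrivial point is that $P\times Q$ is again Lindel\"of \v Cech-complete, which the paper extracts from \cite{RJ}. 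Your route (pull the Suslin scheme of $A$ back along $f$, then apply the Suslin operation inside $X$) also works, but your parenthetical fix---``closed subspace of (Lindel\"of \v Cech-complete)${}\times\w^\w$''---glosses over the fact that $X$ is only $K$-analytic, not Lindel\"of \v Cech-complete, so you must first lift the pulled-back scheme to a Lindel\"of \v Cech-complete cover $P$ of $X$, and you then still need that $P\times\w^\w$ is Lindel\"of. That last fact is exactly the ingredient the paper's argument isolates; so both proofs rest on the same point, but the paper's packaging is shorter and avoids the detour through (3).
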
  

\begin{proof} 1. The first statement is proved in Theorem 5.5.1 of \cite{RJ}.
\smallskip 

2. The second statement follows from Theorem 2.5.2 and Corollary 2.9.4 in \cite{RJ}.
\smallskip

3. The third statement can be easily derived from Theorems 2.5.2 and 2.5.4 in \cite{RJ}.
\smallskip

4. Let $f:X\to Y$ be a continuous map between $K$-analytic spaces and $A\subseteq Y$ is a $K$-analytic space. Find Lindel\"of \v Cech-complete spaces $P,Q$ and continuous surjective maps $\varphi:P\to X$ and $\psi:Q\to A$. Since the space $P$ is \v Cech-complete, there exists a compact Hausdorff space $\bar P$ containing the space $P$ as a dense $G_\delta$-subset. Being Lindel\"of, the $G_\delta$-subspace $P$  of the compact Hausdorff space $\bar P$ is equal to the intersection $\bigcap_{n\in\w}P_n$ of a decreasing sequence $(P_n)_{n\in\w}$ of open $\sigma$-compact subsets of $\bar P$. By analogy, the Lindel\"of \v Cech-complete space $Q$ is equal to the intersection $\bigcap_{n\in\w}Q_n$ of a decreasing sequence $(Q_n)_{n\in\w}$ of open $\sigma$-compact sets in some compact Hausdorff space $\bar Q$. Then $P\times Q$ is equal to the intersection $\bigcap_{n\in\w}(P_n\times Q_n)$ of the decreasing sequence $(P_n\times Q_n)_{n\in\w}$ of open $\sigma$-compact sets in the compact Hausdorff space $\bar P\times \bar Q$. By \cite[2.3.3 and 2.5.4]{RJ}, the \v Cech-complete space space $P\times Q$ is $K$-analytic and hence Lindel\"of. Consider the continuous map $\pi:P\times Q\to X$, $\pi:(p,q)\mapsto\varphi(p)$ and observe that $f^{-1}[A]=\pi[F]$ where 
$$F\defeq\{(p,q)\in P\times Q:\varphi(p)=\psi(q)\}$$is a closed subspace of the Lindel\"of \v Cech-complete space $P\times Q$. Since $F$ is Lindel\"of and \v Cech-complete, the space $f^{-1}[A]=\pi[F]$ is $K$-analytic.
\end{proof}

\section{Measurability and semimeasurability in topological spaces}

A subset $A$ of a topological space $X$ is called
\begin{itemize}
\item {\em functionally closed} if $A=f^{-1}(0)$ for some continuous function $f:X\to\IR$;
\item {\em functionally open} if $X\setminus A$ is functionally closed;
\item {\em functionally Borel} if $A=f^{-1}[B]$ for some continuous function $f:X\to \IR^\w$ and some Borel subset $B$ of $\IR^\w$;
\item {\em functionally analytic} if $A=f^{-1}[B]$ for some continuous function $f:X\to \IR^\w$ and some analytic subset $B$ of $\IR^\w$;
\item {\em functionally coanalytic} if $X\setminus A$ is functionally analytic;
\item  {\em functionally arbitrary} if  $A=f^{-1}[B]$ for some continuous function $f:X\to \IR^\w$ and some subset $B$ of $\IR^\w$;
\end{itemize}

Since Borel subsets of Polish spaces are both analytic and coanalytic \cite[14.11]{Ke}, every functionally Borel set is functionally analytic and functionally coanalytic.
\smallskip

A family of sets $\A$ is called a {\em $\sigma$-algebra} if $\bigcup\A\setminus\bigcup \C\in\A$ for any  countable subfamily $\mathcal C\subseteq\A$. Elements of a $\sigma$-algebra $\A$ are called {\em $\A$-measurable} subsets of the set $X=\bigcup\A$.

For a topological space $X$ by $\Bo$ (resp. $\Ba$) we denote the smallest $\sigma$-algebra  containing all (functionally) open subsets of $X$. Elements of the $\sigma$-algebra $\Bo$ (resp. $\Ba$) are called {\em Borel} (resp. {\em Baire}) subsets of $X$. It is easy to see that the $\sigma$-algebra of Baire sets $\Ba$ coincides with the $\sigma$-algebra of functionally Borel sets in $X$.
\smallskip

A family $\I$ of sets is called an {\em ideal} (resp. a {\em $\sigma$-ideal\/}) if it has the following properties:
\begin{itemize}
\item for any finite (resp. countable) subfamily $\C\subseteq\I$, the union $\bigcup\C$ belongs to $\I$;
\item for any sets $A\subseteq B$, the inclusion $A\in\I$ implies $B\in\I$;
\item $\bigcup\I\notin\I$.
\end{itemize}
We shall say that a $\sigma$-ideal $\I$ is defined on a set $X$ if $X=\bigcup\I$. A subset $P$ of $X=\bigcup\I$ is called {\em $\I$-positive} if $P\notin\I$.

We shall say that a $\sigma$-ideal $\I$ on a topological space $X$ has a ({\em functionally}) {\em Borel base} if every set $I\in\I$ is contained in a (functionally)  Borel set $B\in\I$. By analogy we can define $\sigma$-ideals with {\em functionally analytic, functionally coanalytic} or {\em functionally arbitrary base}.  Since every Borel subset of a Polish space is both analytic and coanalytic, every ideal with a functionally Borel base has functionally coanalytic base.
\smallskip

For a $\sigma$-algebra $\A$ and a $\sigma$-ideal $\I$, let $\A^\pm\!\I$ be the smallest $\sigma$-algebra containing the union $\A\cup\mathcal I$.  If $\bigcup\I\subseteq\bigcup\A$, then $$\A^\pm\!\I=\{(A\setminus I)\cup J:A\in\A\mbox{ and }I,J\in\I\}.$$

A family of sets $\F$ is {\em disjoint} if $A\cap B=\emptyset$ for any distinct sets $A,B\in\F$.
\smallskip

A $\sigma$-ideal $\I$ is defined to be {\em $\F$-ccc} for a family of sets $\F$ if any disjoint subfamily of $\F\setminus\I$ is countable. A $\sigma$-ideal on a topological space will be called {\em ccc} if it is $\Bo$-ccc for the family $\Bo$ of all Borel subsets of $X$. Let us mention that {\em ccc} is the abbreviation of the {\em countable chain condition}.
\smallskip

The following (known) proposition can be easily derived from Lemma~\ref{l:KA}(3) and Szpilrajn-Marczewski Theorem 2.9.2 \cite{RJ} on preservation of measurability by the Souslin operation.

\begin{lemma}\label{l:KA=>m} Let $\Bo$ be the $\sigma$-algebra of all Borel subsets of a  Hausdorff space $X$ and $\I$ be a $\sigma$-ideal with Borel base on $X$. If $\I$ is ccc, then every $K$-analytic subspace of $X$ is  $\BoI$-measurable. 
\end{lemma}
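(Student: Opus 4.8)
The plan is to present an arbitrary $K$-analytic subspace of $X$ as the value of the Souslin operation applied to Borel sets, and then to quote the classical Szpilrajn--Marczewski theorem on the preservation of measurability by the Souslin operation. Concretely, I would fix a $K$-analytic subspace $A\subseteq X$ and apply Lemma~\ref{l:KA}(3) to the subspace $A$ of the Hausdorff space $X$, obtaining a family $(F_s)_{s\in\w^{<\w}}$ of closed (hence Borel) subsets of $X$ with $A=\bigcup_{s\in\w^\w}\bigcap_{n\in\w}F_{s{\restriction}_n}$. After replacing each $F_s$ by $\bigcap_{k\le|s|}F_{s{\restriction}_k}$ (which changes neither the branch intersections $\bigcap_{n\in\w}F_{s{\restriction}_n}$ nor their union $A$), I may assume the Souslin scheme $(F_s)_s$ is regular. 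Thus $A$ is obtained from members $F_s$ of the $\sigma$-algebra $\Bo$ by the Souslin operation.

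Next I would invoke Theorem~2.9.2 of \cite{RJ}: since $\I$ is a $\Bo$-ccc $\sigma$-ideal with Borel base on $X$, the $\sigma$-algebra $\BoI=\Bo^\pm\!\I$ is closed under the Souslin operation. Applying this to the scheme $(F_s)_s$ yields $A\in\BoI$, which is exactly the required conclusion.

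The step I expect to need the most care is matching the hypotheses of the cited theorem with what is available here, and seeing why the ccc assumption is exactly the one that makes it work. Internally, the ccc of $\I$ together with the $\sigma$-completeness of $\Bo$ makes the quotient Boolean algebra $\Bo/\I$ complete, and this is what lets one assign to an arbitrary subset of $X$ a Borel ``$\I$-hull'', i.e.\ a Borel superset that is least modulo $\I$; running the usual induction over the tree $\w^{<\w}$ with these hulls shows that $A$ coincides modulo $\I$ with a Borel set, the Borel base of $\I$ ensuring that the correcting sets can be chosen Borel, so that $A$ acquires the explicit form $(B\setminus I)\cup J$ with $B\in\Bo$ and $I,J\in\I$. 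Since all of this is packaged into \cite[2.9.2]{RJ}, in the final write-up I would spell out only the reduction above and cite that theorem for the remainder.
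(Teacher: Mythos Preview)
Your proposal is correct and follows essentially the same approach as the paper: represent the $K$-analytic subspace via Lemma~\ref{l:KA}(3) as the result of the Souslin operation applied to closed (hence Borel) sets, and then invoke the Szpilrajn--Marczewski Theorem~2.9.2 of \cite{RJ}. The paper merely cites these two ingredients without spelling out the regularization step or the internal role of ccc that you describe, so your write-up is a slightly more detailed version of the same argument.
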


\begin{remark} For a ccc $\sigma$-ideal with (functionally) Borel base on a compact Hausdorff space, the $\sigma$-algebra $\BoI$ can be strictly larger than the $\sigma$-algebra $\BaI$. To construct a suitable example, consider the space $X=[0,\w_1]$ endowed with the order topology. A subset $S$ of $[0,\w_1]$ is called {\em stationary} if $S$ has nonempty intersection with any closed uncountable subset of $X$. Since the intersection of countably many closed uncountable sets in $[0,\w_1]$ is uncountable, the family of nonstationary sets is a $\sigma$-ideal and so is the family $\I$ of all subsets of nonstationary functionally Borel sets in $[0,\w_1]$. Using Fodor's Pressing Down Lemma \cite[8.7]{Jech}, it is possible to prove that every stationary Borel set in $[0,\w_1]$ contains an uncountable closed subset of $X$, which implies that the ideal $\I$ is ccc. Since every real-valued continuous function on $[0,\w_1]$ is constant on some neighborhood of $\w_1$, every functionally Borel subset $B\subseteq [0,\w_1)$ of $[0,\w_1]$ is countable. This implies that the open set $[0,\w_1)$ is not $\BaI$-measurable and hence $\Bo\not\subseteq\BaI$ for the compact Hausdorff space $X=[0,\w_1]$.
\end{remark}

\begin{definition} Let $X$ be a topological space and $\I$ be a $\sigma$-ideal.
 A subset $M\subseteq X$ is called {\em $\A^\curlyvee\!\I$-semimeasurable} if for any $K$-analytic set $A\subseteq X$ with $A\cap M\notin\I$ there exists a $K$-analytic $\I$-positive set $B\subseteq A\cap M$.

 A function $f:X\to Y$ to a topological space $Y$ is called {\em $\AI$-semimeasurable} if for every open set $U\subseteq Y$ the preimage $f^{-1}[U]$ is $\AI$-semimeasurable.
\end{definition}

\begin{proposition}\label{p:m=>sm} Let $X$ be a topological space, $\Ba$ be the $\sigma$-algebra of functionally Borel sets in $X$, and $\I$ be a $\sigma$-ideal on $X$. If the ideal $\I$ has a functionally coanalytic base, then every $\BaI$-measurable set  is $\A^\curlyvee\!\I$-semimeasurable.
\end{proposition}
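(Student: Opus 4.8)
The plan is to unwind the definition of $\BaI$-semimeasurability and reduce, via the structure theorem $\BaI=\{(A\setminus I)\cup J: A\in\Ba,\ I,J\in\I\}$, to producing a $K$-analytic $\I$-positive subset of $A\cap M$ whenever $A$ is $K$-analytic and $A\cap M\notin\I$. So fix a $\BaI$-measurable set $M$, write $M=(B\setminus I_0)\cup J_0$ with $B\in\Ba$ and $I_0,J_0\in\I$, and fix a $K$-analytic set $A\subseteq X$ with $A\cap M\notin\I$. Since $\I$ is a $\sigma$-ideal, $A\cap(B\setminus I_0)\notin\I$, and since $A\cap(B\setminus I_0)\subseteq A\cap M$, it suffices to find a $K$-analytic $\I$-positive set inside $A\cap B\setminus I_0$. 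Using that $\I$ has a functionally coanalytic base, enlarge $I_0$ to a functionally coanalytic set $C\in\I$ with $I_0\subseteq C$; then $A\cap B\setminus C\subseteq A\cap B\setminus I_0$ is still $\I$-positive, and it is the intersection of the $K$-analytic set $A$ with the set $B\setminus C$, which is functionally Borel minus functionally coanalytic, i.e.\ functionally analytic intersected with functionally analytic (using that functionally Borel sets are functionally analytic and that the complement of a functionally coanalytic set is functionally analytic).

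The core step is then the following: \emph{if $A$ is $K$-analytic and $S$ is functionally analytic, then $A\cap S$ is $K$-analytic.} Indeed, $S=g^{-1}[E]$ for a continuous $g:X\to\IR^\w$ and an analytic $E\subseteq\IR^\w$. Restrict $g$ to $A$: the map $g{\restriction}_A:A\to\IR^\w$ is a continuous map from a $K$-analytic space, its image $g[A]$ is $K$-analytic, and $E\cap g[A]$ is a $K$-analytic subset of the $K$-analytic space $g[A]$ (an analytic subspace of a metrizable space is $K$-analytic, and the intersection of two $K$-analytic subsets of a Tychonoff space is $K$-analytic — each is $K$-analytic in the ambient space, and one can pull back along the diagonal as in the proof of Lemma~\ref{l:KA}(4)). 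By Lemma~\ref{l:KA}(4) applied to $g{\restriction}_A$, the preimage $(g{\restriction}_A)^{-1}[E\cap g[A]]=A\cap S$ is $K$-analytic. Applying this with $S=B\setminus C$ functionally analytic shows $A\cap B\setminus C$ is $K$-analytic; it is $\I$-positive and contained in $A\cap M$, which is exactly what semimeasurability requires.

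The main obstacle I anticipate is the bookkeeping around which functional complexity classes are closed under which operations — specifically, verifying carefully that $B\setminus C$ is functionally analytic (one needs a single continuous map $X\to\IR^\w$ simultaneously witnessing the functionally Borel set $B$ and the functionally coanalytic set $C$, which is obtained by pairing the two witnessing maps into $\IR^\w\times\IR^\w\cong\IR^\w$), and that intersections of $K$-analytic subsets remain $K$-analytic in the relevant ambient space. None of this is deep, but it is the part where a sign error or a missed hypothesis would creep in; the use of the functionally coanalytic base is the one genuinely necessary hypothesis, since without it $A\cap B\setminus I_0$ need not be $K$-analytic. Everything else is a direct application of Lemma~\ref{l:KA}.
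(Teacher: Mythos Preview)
Your proposal is correct and follows essentially the same approach as the paper: decompose $M$ via a functionally Borel set $B$ up to sets in $\I$, enlarge the error to a functionally coanalytic $C\in\I$, and then use Lemma~\ref{l:KA}(4) to show $A\cap B\setminus C$ is the required $\I$-positive $K$-analytic subset of $A\cap M$. The only cosmetic difference is that the paper applies Lemma~\ref{l:KA}(4) in two steps (first to get $A\cap B$ $K$-analytic via the map witnessing $B$, then to get $(A\cap B)\setminus C$ $K$-analytic via the map witnessing $X\setminus C$), whereas you combine them by first arguing $B\setminus C$ is functionally analytic via the paired map --- both routes are fine, and your detour through $g[A]$ and intersections of $K$-analytic sets is unnecessary since Lemma~\ref{l:KA}(4) applies directly to $g{\restriction}_A:A\to\IR^\w$ with the analytic (hence $K$-analytic) set $E\subseteq\IR^\w$.
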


\begin{proof} Assume that the ideal $\I$ has a functionally coanalytic base. Given a set $M\in\BaI$ and a $K$-analytic subspace $A\subseteq X$ with $A\cap M\notin\I$, we should find an $\I$-positive $K$-analytic set in $A\cap M$. Since $M\in\BaI$, there exists a functionally Borel set $B$ in $X$ such that the symmetric difference $M\Delta B=(M\setminus B)\cup(B\setminus M)$ belongs to the ideal $\I$. Since $\I$ has functionally coanalytic base, the symmetric difference $M\Delta B$ is contained in some functionally coanalytic set $C\in\I$. Since the set $B$ is functionally Borel, there exists a continuous function $f:X\to \IR^\w$ such that $B=f^{-1}[B']$ for some Borel (and hence analytic) set in $\IR^\w$. By Lemma~\ref{l:KA}(4), the subset $A\cap B=(f{\restriction}_A)^{-1}[B']$ is $K$-analytic. Since the set  $X\setminus C$ is functionally analytic in $X$, there exists a continuous map $g:X\to\IR^\w$ such that $X\setminus C=g^{-1}[A']$ for some analytic set $A'$ in $\IR^\w$. By Lemma~\ref{l:KA}(4), the set $A\cap B\setminus C=(g{\restriction}_{A\cap B})^{-1}[A']$ is $K$-analytic. Since $A\cap M\notin\I$ and $(A\cap M)\setminus (A\cap B\setminus C)\subseteq C\in\I$, the $K$-analytic subset $A\cap B\setminus C$ of $M$ is $\I$-positive, witnessing that the set $M$ is $\AI$-semimeasurable.
\end{proof}

\begin{proposition} Let $X$ be a $K$-analytic space, $\Bo$ be the $\sigma$-algebra of Borel sets in $X$, $\Ba$ be the $\sigma$-algebra of functionally Borel sets in $X$, and  $\I$ be a ccc $\sigma$-ideal with a functionally Borel  base on $X$ such that $\Bo\subseteq\BaI$. A subset $S$ of $X$ is $\Bo^\pm\!\I$-measurable if and only if the sets $S$ and $X\setminus S$ are $\A^\curlyvee\!\I$-semimeasurable.
\end{proposition}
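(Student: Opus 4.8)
The plan is to prove the two implications separately, relying on Proposition~\ref{p:m=>sm} for the easier direction and on the ccc assumption together with an exhaustion argument for the harder one.

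\textbf{The forward implication.} Suppose $S$ is $\Bo^\pm\!\I$-measurable. Since $\Bo\subseteq\BaI$ by hypothesis, the $\sigma$-algebra $\Bo^\pm\!\I$ is contained in $(\BaI)^\pm\!\I=\BaI$, so $S\in\BaI$ and likewise $X\setminus S\in\BaI$. Because $\I$ has a functionally Borel base, it has a functionally coanalytic base (as noted in the excerpt: every Borel set in a Polish space is coanalytic). Hence Proposition~\ref{p:m=>sm} applies and gives that both $S$ and $X\setminus S$ are $\AI$-semimeasurable. This direction is essentially bookkeeping.

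\textbf{The reverse implication.} This is the substantive part. Assume $S$ and $X\setminus S$ are $\AI$-semimeasurable; we must produce a Borel set $B$ with $S\triangle B\in\I$, which (since $\bigcup\I\subseteq\bigcup\Bo$) is equivalent to $\Bo^\pm\!\I$-measurability. The idea is a standard ccc exhaustion. Consider the family $\mathcal F$ of all $K$-analytic $\I$-positive subsets of $X$ that are contained either in $S$ or in $X\setminus S$. By Lemma~\ref{l:KA=>m}, every $K$-analytic subspace of $X$ is $\BaI$-measurable (here one uses $\Bo\subseteq\BaI$ to upgrade the conclusion of Lemma~\ref{l:KA=>m}, which is stated for Borel base, to functionally Borel base — or one simply works with $\BoI$ throughout and converts at the end via $\Bo\subseteq\BaI$). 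Using the ccc property of $\I$ with respect to $\Bo$, choose a maximal disjoint subfamily $\{B_n:n\in\w\}$ of $\mathcal F$; it is countable. Split it as $\{B_n:n\in\w_0\}$, those contained in $S$, and $\{B_n:n\in\w_1\}$, those contained in $X\setminus S$. Put $B\defeq\bigcup_{n\in\w_0}B_n'$ where $B_n'$ is a Borel (indeed $\BaI$-measurable, modified by an $\I$-set to be Borel) hull of $B_n$ inside $S$; more cleanly, replace each $B_n$ by a Borel set $B_n^\ast\in\Bo^\pm\!\I$ with $B_n\subseteq B_n^\ast$ and $B_n^\ast\setminus B_n\in\I$, arranging $B_n^\ast\subseteq S$ for $n\in\w_0$ (possible after subtracting an $\I$-set) and similarly for $\w_1$.

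\textbf{Closing the argument.} It remains to check $S\triangle B\in\I$. By construction $B\setminus S\subseteq\bigcup_{n\in\w_0}(B_n^\ast\setminus B_n)\in\I$. For the other inclusion, suppose toward a contradiction that $S\setminus B\notin\I$. Apply $\AI$-semimeasurability of $S$ to the $K$-analytic set $A\defeq X\setminus B$ (which is $\BoI$-measurable, hence after an $\I$-modification $K$-analytic? — here is the snag): we need a $K$-analytic set witnessing $A\cap S\notin\I$. Since $X$ is $K$-analytic and $B\in\Bo^\pm\!\I$, the complement $X\setminus B$ need not itself be $K$-analytic, but it is $\BoI$-measurable; intersecting with $S$ we still get a $\BaI$-measurable set of positive $\I$-measure, and by Proposition~\ref{p:m=>sm} (applied to this set, using $\AI$-semimeasurability is not circular because we are invoking the proposition, not the hypothesis) it is $\AI$-semimeasurable — wait, that just restates things. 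The correct move: because $X$ is $K$-analytic and $S\setminus B$ is $\BaI$-measurable and $\I$-positive, Lemma~\ref{l:KA=>m} together with the structure theory gives that $S\setminus B$ contains a $K$-analytic $\I$-positive subset (one writes $S\setminus B=(A'\setminus I)\cup J$ with $A'$ functionally Borel, hence $K$-analytic, and the $\I$-positive part lies in $A'$, which is $K$-analytic). Then semimeasurability of $S$ hands us a $K$-analytic $\I$-positive $D\subseteq S\setminus B\subseteq S$, so $D\in\mathcal F$ and $D$ is disjoint from every $B_n$ with $n\in\w_0$ (since $D\cap B=\emptyset$) and from every $B_n$ with $n\in\w_1$ (since $D\subseteq S$), contradicting maximality of $\{B_n\}$. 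Hence $S\setminus B\in\I$, and $S\triangle B\in\I$, so $S\in\Bo^\pm\!\I$.

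\textbf{Where the difficulty lies.} The main obstacle is the step asserting that an $\I$-positive $\BaI$-measurable (or $\BoI$-measurable) subset of a $K$-analytic space contains an $\I$-positive $K$-analytic subset; this is exactly what makes the semimeasurability hypothesis usable, and it rests on the representation of $\BaI$-measurable sets via functionally Borel sets (which are $K$-analytic) modulo $\I$, combined with the fact that the $\I$-positive part of such a set can be taken inside the functionally Borel piece. A secondary bookkeeping point is keeping the distinction between $\Bo$ and $\Ba$ straight: the hypothesis $\Bo\subseteq\BaI$ is precisely what lets one pass freely between $\BoI$ and $\BaI$ at the end, so that the Borel set $B$ produced really certifies $\Bo^\pm\!\I$-measurability rather than merely $\Ba^\pm\!\I$-measurability.
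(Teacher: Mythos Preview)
Your forward implication is fine and matches the paper. The reverse implication has a genuine gap, which you yourself flag (``here is the snag'') but do not repair.

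The problem is your choice of exhausting family. You build the maximal disjoint family $\{B_n\}$ out of \emph{$K$-analytic} $\I$-positive subsets of $S$ and of $X\setminus S$. To derive a contradiction from $S\setminus B\notin\I$ you need a $K$-analytic set $A$ with $A\cap S\notin\I$ and $A$ disjoint (at least mod $\I$) from all the $B_n$; the natural candidate is the complement of $\bigcup_n B_n$, but a complement of a countable union of $K$-analytic sets need not be $K$-analytic, so $\AI$-semimeasurability does not apply. Your attempted recovery, ``because $S\setminus B$ is $\BaI$-measurable,'' is circular: measurability of $S$ is exactly what you are trying to establish, and nothing you have done so far gives measurability of $S\setminus B$. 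The same circularity infects the representation $S\setminus B=(A'\setminus I)\cup J$ you invoke. (A minor related issue: your construction of Borel hulls $B_n^\ast$ with $B_n\subseteq B_n^\ast\subseteq S$ does not work as stated, since subtracting $B_n^\ast\setminus S$ from a Borel set need not yield a Borel set.)

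The paper avoids this by exhausting with \emph{functionally Borel} $\I$-positive subsets of $S$ and of $X\setminus S$ rather than $K$-analytic ones. The leftover set $B''\setminus B'$ is then functionally Borel, hence (by Lemma~\ref{l:KA}(4), since $X$ is $K$-analytic) itself $K$-analytic, and semimeasurability applies directly to it. The $K$-analytic $\I$-positive set produced inside $(B''\setminus B')\cap S$ is then converted back to a functionally Borel $\I$-positive subset of $S$ (via Lemma~\ref{l:KA=>m}, the hypothesis $\Bo\subseteq\BaI$, and the functionally Borel base of $\I$), contradicting maximality. Your argument can be rescued by exactly this device: exhaust with functionally Borel sets, not $K$-analytic ones.
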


\begin{proof} Let $S$ be a subset of $X$. If $S$ is $\BoI$-measurable, then it is $\BaI$-measurable as $\Ba\subseteq\BoI$. By Proposition~\ref{p:m=>sm}, the sets $S$ and $X\setminus S$ are $\AI$-semimeasurable.
\smallskip

Now assume that the sets $S$ and $X\setminus S$ are $\AI$-semimeasurable. Applying the Kuratowski--Zorn Lemma, choose maximal disjoint families $\mathcal B'\subseteq\Ba\setminus\I$ and $\mathcal B''\subseteq\Ba\setminus \I$ such that $\bigcup\mathcal B'\subseteq S$ and $\bigcup\mathcal B''\subseteq X\setminus S$. The ccc property of the ideal $\I$ ensures that the families $\mathcal B'$ and $\mathcal B''$ are countable and hence the sets $B'\defeq\bigcup\mathcal B'$ and $B''\defeq X\setminus\bigcup\mathcal B''$ are functionally Borel in $X$. Observe that $B'\subseteq S\subseteq B''$. To see that $S$ is $\BaI$-measurable, it remains to show that $B''\setminus B'\in\I$. By Lemma~\ref{l:KA}(4), the functionally Borel set $B\defeq B''\setminus B'$ in the $K$-analytic space $X$ is $K$-analytic. Assuming that $B\notin\I$, we conclude that $B\cap S\notin\I$ or $B\setminus S\notin\I$. If $B\cap S\notin\I$, then by the $\AI$-semimeasurability of $S$, there exists an $\I$-positive $K$-analytic set $A\subseteq B\cap S$. By Lemma~\ref{l:KA=>m}, the $K$-analytic set $A$ is $\BoI$-measurable. Since $\Bo\subseteq\BaI$, the $\BoI$-measurable set $A$ is $\BaI$-measurable and hence $A\Delta F\in\I$ for some functionally Borel set $F$ in $X$. Since the ideal $\I$ has a functionally Borel base, the set $A\Delta F$ is contained in some  functionally Borel set $I\in\I$. Then the set $A\setminus I=F\setminus I$ is functionally Borel, $\I$-positive, and disjoint with the set $B'\subseteq S$. But the existence of such set contradicts the maximality of the family $\mathcal B'$. This contradiction shows that $B\cap S\in\I$. By analogy, we can use the $\AI$-semimeasurability of the set $X\setminus S$ to prove that $B\setminus S\in\I$. Then $B=(B\cap S)\cup(B\setminus S)\in\I$.
\end{proof} 

The following important theorem was proved by  Brzuchowski, Cicho\'n, Grzegorek and Ryll-Nardzewski  in \cite{BCGRN}.

\begin{theorem}[Brzuchowski, Cicho\'n, Grzegorek, Ryll-Nardzewski]\label{t:4Poles} Let $\I$ be a $\sigma$-ideal with a Borel base on a Polish space $X$. Any point-finite family $\J\subseteq\I$ with $\bigcup\J\notin\I$ contains a subfamily $\J'$ whose union $\bigcup\J'$ is not $\Bo^\pm\!\I$-measurable in $X$.
\end{theorem}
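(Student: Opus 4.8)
The plan is to prove the contrapositive: assuming that $\bigcup\J'$ is $\BoI$-measurable for \emph{every} subfamily $\J'\subseteq\J$, I would show $\bigcup\J\in\I$.

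\emph{Step 1: repackaging measurability as a Boolean-valued measure.} Let $\mathbb B\defeq\Bo/(\Bo\cap\I)$ be the quotient of the $\sigma$-algebra $\Bo$ of Borel subsets of $X$ by the $\sigma$-ideal of its members lying in $\I$. Since $\Bo$ is generated by a countable base of $X$, the $\sigma$-complete Boolean algebra $\mathbb B$ is $\sigma$-generated by countably many elements (so $|\mathbb B|\le\mathfrak c$). Using that $\I$ has a Borel base, for each $\J'\subseteq\J$ pick a Borel set $B_{\J'}\subseteq\bigcup\J'$ with $\bigcup\J'\setminus B_{\J'}\in\I$ and set $\mu(\J')\defeq[B_{\J'}]\in\mathbb B$. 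One checks routinely that $\mu$ is well defined, monotone, and preserves countable suprema, that $\mu(\{A\})=0$ for every $A\in\J$ (as $A\in\I$), whence $\mu(\C)=0$ for every countable $\C\subseteq\J$, and that $\mu(\J)\ne 0$ exactly because $\bigcup\J\notin\I$. Replacing $X$ by the Borel --- hence Polish --- subspace $B_{\J}$, $\I$ by its trace on it, and each $A\in\J$ by $A\cap B_{\J}$, I may also assume $\bigcup\J=X$, $\mu(\J)=\mathbf 1$ (the unit of $\mathbb B$), and $\mathbb B\ne\{0\}$. Finally, discarding the empty members of $\J$ and picking a point of each remaining one gives a finite-to-one map $\J\to X$, so point-finiteness yields $|\J|\le|X|\le\mathfrak c$.

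\emph{Step 2: from point-finiteness to a homomorphism.} For $k\ge 1$ let $D_k=\{x\in X: x$ belongs to exactly $k$ members of $\J\}$; point-finiteness gives $X=\bigsqcup_{k\ge 1}D_k$, so some $D_m$ is $\I$-positive. I would use this multiplicity decomposition, together with the measurability hypothesis, to reduce to the situation in which the members of $\J$ are pairwise disjoint; for instance one may pass to the disjoint family of ``multiplicity patterns'' $A_F\defeq\{x:\{A\in\J:x\in A\}=F\}$, $F\in[\J]^{<\w}$, for which $\bigcup_{F\subseteq R}A_F=X\setminus\bigcup\{A\in\J:A\notin R\}$ is $\BoI$-measurable for every $R\subseteq\J$, and then control the ``overlap set'' $\{x: x$ lies in at least two members of $\J\}$ by a further restriction. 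On a pairwise disjoint family $S\mapsto\bigcup S$ commutes with complements and finite intersections, so $\mu$ is then a $\sigma$-complete Boolean homomorphism $h\colon\mathcal P(\J)\to\mathbb B$ which still annihilates all singletons and satisfies $h(\J)=\mathbf 1\ne\mathbf 0$; checking that measurability genuinely survives the reduction is the technical part of this step.

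\emph{Step 3 (the main obstacle): killing the homomorphism.} It remains to derive a contradiction from the existence of a $\sigma$-complete Boolean homomorphism $h$ from $\mathcal P(\J)$ into a countably $\sigma$-generated Boolean algebra $\mathbb B\ne\{0\}$ with $h(\{A\})=\mathbf 0$ for all $A\in\J$ and $h(\J)=\mathbf 1$. The kernel $N$ of $h$ is a proper $\sigma$-ideal on $\J$ containing every countable subset; by an Ulam-matrix argument every $N$-positive subset of $\J$ splits into uncountably many pairwise disjoint $N$-positive pieces, so below $\mathbf 1$ in $\mathbb B$ there sit uncountable antichains realized by pairwise $\I$-disjoint Borel $\I$-positive sets. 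Iterating this splitting along the countably many $\sigma$-generators of $\mathbb B$ builds a tree of subfamilies of $\J$ whose branches, by $\sigma$-completeness of $h$, have value $\mathbf 0$; a fusion/diagonalization through this tree then forces $h(\J)=\mathbf 0$, contradicting $h(\J)=\mathbf 1$ and completing the proof. This final combinatorial core is precisely the content of the theorem of Brzuchowski, Cicho\'n, Grzegorek and Ryll-Nardzewski, and is where essentially all the difficulty resides.
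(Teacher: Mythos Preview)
The paper does not prove this theorem at all: it is quoted as an external result and attributed to \cite{BCGRN}, so there is no ``paper's own proof'' to compare your attempt against.

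That said, your proposal is not a proof but a partial sketch with two substantial gaps. In Step~2 you pass from the point-finite family $\J$ to the disjoint family of multiplicity patterns $\{A_F:F\in[\J]^{<\w}\}$, but the identity $\bigcup_{F\subseteq R}A_F=X\setminus\bigcup\{A\in\J:A\notin R\}$ only shows that unions of the very special subfamilies $\{A_F:F\subseteq R\}$ are $\BoI$-measurable; it says nothing about an \emph{arbitrary} subfamily $\mathcal S\subseteq\{A_F\}$, which is what you need in order to conclude that the induced map on $\mathcal P(\{A_F\})$ is a genuine $\sigma$-homomorphism. You acknowledge this (``checking that measurability genuinely survives the reduction is the technical part of this step'') but do not carry it out, and it is not routine.

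More seriously, Step~3 is circular. You reduce to the existence of a nontrivial $\sigma$-complete homomorphism $h\colon\mathcal P(\J)\to\mathbb B$ annihilating singletons, sketch an Ulam-matrix/fusion strategy in a sentence, and then write that ``this final combinatorial core is precisely the content of the theorem of Brzuchowski, Cicho\'n, Grzegorek and Ryll-Nardzewski, and is where essentially all the difficulty resides.'' That is an admission, not an argument: you have reformulated the theorem rather than proved it. In particular, the assertion that every $N$-positive subset of $\J$ splits into uncountably many pairwise disjoint $N$-positive pieces does not follow from an Ulam matrix alone when $|\J|$ may exceed $\omega_1$, and the vague ``fusion/diagonalization through this tree'' needs the specific structure of $\mathbb B$ (or of the Polish space $X$) in a way you have not explained. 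If you want a self-contained proof, this is exactly the step you must actually write out.
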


In the proof of our principal results we shall use a ``semi-improvement'' of Theorem~\ref{t:4Poles}, proved by Banakh, Ra\l owski and \.Zeberski in \cite{BRZ}.

\begin{theorem}[Banakh, Ra\l owski, \.Zeberski]\label{t:BRZ} Let $\I$ be a $\sigma$-ideal on an analytic space $X$. Any point-finite family $\J\subseteq\I$ with $\bigcup\J\notin\I$ contains a subfamily $\J'\subseteq\J$ whose union $\bigcup\J'$ is not $\A^\curlyvee\!\I$-semimeasurable.
\end{theorem}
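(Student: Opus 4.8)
The plan is to build the subfamily $\J'$ explicitly, following the strategy behind Theorem~\ref{t:4Poles} but with $K$-analytic sets playing the role of the Borel hulls used there. First I would reduce to the case where $\bigcup\J$ is $K$-analytic: if $\bigcup\J$ contains no $\I$-positive $K$-analytic subset, then $\J':=\J$ already works, since $\bigcup\J\notin\I$ and testing the definition of semimeasurability with $A:=X$ shows $\bigcup\J$ is not $\AI$-semimeasurable; otherwise pick an $\I$-positive $K$-analytic $Y\subseteq\bigcup\J$, which is analytic by Lemma~\ref{l:KA}(1), and note that replacing $X$ by $Y$ and $\J$ by $\{J\cap Y:J\in\J\}$ preserves all hypotheses, while a $K$-analytic witness of non-semimeasurability computed inside $Y$ also works inside $X$ (a $K$-analytic subset of $Y$ is $K$-analytic in $X$, and $\I$-positivity is intrinsic). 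One may moreover pull everything back along a continuous surjection $q:Z\to X$ from a Polish space: the family $\{q^{-1}[J]:J\in\J\}$ is point-finite with union $Z$, its members lie in the $\sigma$-ideal $\hat\I:=\{E\subseteq Z:q[E]\in\I\}$, and if $q^{-1}[\bigcup\J']$ fails to be $\A^\curlyvee\!\hat\I$-semimeasurable in $Z$ then $\bigcup\J'$ fails to be $\AI$-semimeasurable in $X$ — a $K$-analytic witness $A\subseteq Z$ maps to the witness $q[A]$, using the identity $q[A\cap q^{-1}[M]]=q[A]\cap M$ and the fact that $q^{-1}[B]\cap A$ would be an $\hat\I$-positive $K$-analytic subset of $A\cap q^{-1}[M]$ for any $\I$-positive $K$-analytic $B\subseteq q[A]\cap M$. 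So I may assume in addition that $X=\bigcup\J$ is Polish.

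For the main construction the point is that the desired conclusion involves only $K$-analytic subsets of $X$, of which (as $X$ is Polish) there are at most $\mathfrak c$, and they are closed under countable unions and the Souslin operation; moreover an $\I$-positive $K$-analytic set contained in a subunion $\bigcup\J'$ would at once witness that $\bigcup\J'$ \emph{is} semimeasurable, so it is precisely such sets that must be defeated. I would therefore enumerate the $\I$-positive $K$-analytic subsets of $X$ as $(A_\xi)_{\xi<\mathfrak c}$ and, using that point-finiteness makes $I_x:=\bigcup\{J\in\J:x\in J\}$ a finite — hence $\I$-small — union of members of $\J$, choose by transfinite recursion points $x_\xi\in A_\xi$. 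With $\J':=\{J\in\J:x_\xi\notin J\text{ for all }\xi<\mathfrak c\}$ one then has $x_\xi\in A_\xi\setminus\bigcup\J'$ (so no $A_\xi$ lies in $\bigcup\J'$) and $\bigcup\J'\supseteq\bigcup\J\setminus\bigcup_{\xi<\mathfrak c}I_{x_\xi}$; consequently, as soon as $\bigcup_{\xi<\mathfrak c}I_{x_\xi}$ is confined to a single member of $\I$, the set $\bigcup\J'$ is $\I$-positive with no $\I$-positive $K$-analytic subset, and testing semimeasurability with $A:=X$ completes the proof.

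The main obstacle is exactly that confinement: the recursion has length $\mathfrak c$, whereas a $\sigma$-ideal absorbs only countable unions, so the deleted region $\bigcup_{\xi<\mathfrak c}I_{x_\xi}$ must be steered into one fixed $\I$-set, and a naive recursion fails at limit stages of uncountable cofinality. This is where Theorem~\ref{t:4Poles} enters in earnest: applied on the Polish space to the $\sigma$-ideal of subsets of Borel $\I$-sets — which \emph{does} have a Borel base — and to the subfamily of those $J\in\J$ contained in a Borel $\I$-set, it yields (at least when that subfamily has $\I$-positive union) a subunion failing to be measurable with respect to that $\sigma$-ideal, and one checks — for instance by pinching a semimeasurable set, modulo $\I$, between a $K$-analytic set it contains and a $K$-analytic set containing it — that such a set cannot be $\AI$-semimeasurable. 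The genuinely new case, where instead the members of $\J$ avoiding every Borel $\I$-set carry the union, has to be treated by a direct argument, showing that such ``spread-out'' members can be dodged in the recursion without endangering $\I$-positivity. Coordinating this Borel-base layer, which powers the construction, with the $K$-analytic layer, which carries the conclusion, and making the $\I$-positivity bookkeeping go through uniformly, is the step I expect to demand the most care.
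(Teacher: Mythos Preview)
The paper does not contain a proof of Theorem~\ref{t:BRZ}: the result is merely quoted from the preprint \cite{BRZ} of Banakh, Ra\l owski and \.Zeberski, so there is no ``paper's own proof'' against which to compare your attempt. (Judging from the title of \cite{BRZ}, the argument there is game-theoretic rather than a transfinite diagonalization.)

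As for your proposal itself, it is at present a plan with an acknowledged gap rather than a proof. The reductions in your first paragraph look sound, but the core construction fails exactly where you say it does: removing the stars $I_{x_\xi}$ over a recursion of length $\mathfrak c$ gives no control on $\bigcup_{\xi<\mathfrak c}I_{x_\xi}$, so there is no reason for $\bigcup\J'$ to remain $\I$-positive. Your suggested repair via Theorem~\ref{t:4Poles} is not convincing as stated. In your ``Case A'' you would obtain a subfamily $\J_0'$ whose union is not $\Bo^\pm\!\I_0$-measurable (for $\I_0$ the $\sigma$-ideal generated by Borel members of $\I$), and you assert that such a set cannot be $\AI$-semimeasurable; but $\AI$-semimeasurability is a \emph{one-sided} condition (it says nothing about the complement), so your ``pinching'' argument, which would need control from both sides, does not go through. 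And your ``Case B'' is left entirely open: you say the spread-out members ``can be dodged in the recursion without endangering $\I$-positivity'' but give no mechanism for this --- it is precisely the hard point, since without a Borel base on $\I$ there is no obvious way to keep the deleted region inside a single $\I$-set.

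In short: the statement is imported wholesale from \cite{BRZ}, and your sketch, while containing correct preliminary reductions, does not yet supply the missing combinatorial engine.
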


\section{Steinhaus ideals in topological groups}\label{s:4}

For sets $A,B$ is a group $X$, let $$AB\defeq\{ab:a\in A,\;b\in B\}$$ be the pointwise product of the sets $A,B$ in $X$. Also define the powers $A^{\cdot n}$ of $A$ in $X$ by the recursive formula: $$\mbox{$A^{\cdot 1}=A$ and $A^{\cdot(n+1)}=A^{\cdot n}A$ for $n\in\IN$.}$$ 

\begin{definition}  An ideal $\I$ on a topological group $X$ is defined to be
\begin{itemize}
\item {\em left-invariant} if for any set $I\in\I$ and element $x\in X$ the left shift $xI\defeq\{xy:y\in I\}$ of $I$ in the group $X$ belongs to the ideal $\I$;
\item {\em $n$-Steinhaus} for $n\in\IN$ if for any $\I$-positive $K$-analytic set $A$ in $X$, the set $(AA^{-1})^{\cdot n}$ is a neighborhood of the identity in $X$;
\item {\em  Steinhaus} if $\I$ is $n$-Steinhaus for some $n\in\IN$.
\end{itemize}
\end{definition}

\begin{remark} The measurability of $K$-analytic sets and the classical results of Pettis \cite{Pettis} and Steinhaus--Weil \cite{Strom} imply that the ideal $\M$ of meager sets in any Baire topological group is 1-Steinhaus and the ideal $\N$ of Haar-null sets in any locally compact group is 1-Steinhaus. More examples of 1-Steinhaus ideals on Polish groups can be found in \cite{BJGS}, \cite{BO1}, \cite{BO2}, \cite{BO3}, \cite{Chris72}, \cite{EN}, \cite{Jab}.
\end{remark}

A topological space $X$ is {\em Baire} if for any sequence $(U_n)_{n\in\w}$ of open dense sets in $X$, the intersection $\bigcap_{n\in\w}U_n$ is dense in $X$. A topological group is {\em Baire} if its underlying topological space is Baire.

In the proof of Proposition~\ref{p:Steinhaus} we shall use the following classical result of Pettis \cite{Pettis}.

\begin{lemma}[Pettis]\label{l:Pettis} Let $A$ be a nonmeager set in a Baire topological group $X$. If $A$ has the Baire property in $X$, then $AA^{-1}$ is a neighborhood of the identity in $X$.
\end{lemma}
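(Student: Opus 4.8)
The plan is to reduce $A$, modulo a meager set, to a nonempty open set, and then exploit the joint continuity of the group multiplication together with the Baire category theorem to show that every $x$ in a suitable neighborhood of the identity can be written as a quotient $ab^{-1}$ with $a,b\in A$.

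First I would apply the standard characterization of sets with the Baire property to write $A=U\,\Delta\,M$ with $U$ open in $X$ and $M$ meager in $X$. Since $A$ is nonmeager while $M$ is meager, the open set $U$ is nonempty; moreover both $U\setminus A$ and $A\setminus U$ are contained in $M$, so every point of $U$ lying outside $M$ already belongs to $A$. As $U$ is an open subspace of a Baire space, $U$ is itself Baire, so the comeager subset $U\setminus M$ of the nonempty set $U$ is nonempty; fix a point $v_0\in U\setminus M$. Using the joint continuity of multiplication at the pair $(e,v_0)$ (where $ev_0=v_0\in U$ and $U$ is open), I would choose open neighborhoods $W$ of the identity $e$ and $V\subseteq U$ of $v_0$ with $WV\subseteq U$.

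For the core step, fix an arbitrary $x\in W$ and consider the set $N_x\defeq(M\cup x^{-1}M)\cap V$. Since left translations are homeomorphisms, $x^{-1}M$ is meager in $X$, so $N_x$ is meager in the nonempty open --- hence Baire --- set $V$; therefore there is a point $v\in V\setminus N_x$. Then $v\in U\setminus M\subseteq A$, and $xv\in WV\setminus M\subseteq U\setminus M\subseteq A$, so $x=(xv)v^{-1}\in AA^{-1}$. As $x\in W$ was arbitrary, $W\subseteq AA^{-1}$, which proves that $AA^{-1}$ is a neighborhood of the identity. The only point demanding a little care --- rather than a genuine obstacle --- is the handling of meagerness in subspaces: one must observe that a nowhere dense subset of $X$ meets the open set $V$ in a set nowhere dense in $V$, so that $M\cap V$ and $x^{-1}M\cap V$ are meager in $V$, and that $V$, being open in the Baire space $X$, is itself Baire, so that the category argument inside $V$ is legitimate. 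Everything else follows directly from continuity of the group operations.
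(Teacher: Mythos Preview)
Your argument is correct and is essentially the standard proof of Pettis's theorem: reduce $A$ modulo a meager set to a nonempty open set $U$, use joint continuity of multiplication to get $WV\subseteq U$, and then for each $x\in W$ use a category argument in the Baire open set $V$ to find $v\in V$ with both $v$ and $xv$ in $U\setminus M\subseteq A$. The subsidiary points you flag --- that meager-in-$X$ restricts to meager-in-$V$ for open $V$, and that open subspaces of Baire spaces are Baire --- are handled correctly.

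Note, however, that the paper does not give its own proof of this lemma: it is stated as a classical result and attributed to Pettis with a citation, so there is no ``paper's proof'' to compare against. Your write-up supplies exactly the kind of self-contained argument one would expect for this lemma; it matches the usual textbook treatment.
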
 

\begin{proposition}\label{p:Steinhaus} Let $X$ be a Baire topological group and $\A$ be the family of all $K$-analytic sets in $X$. Any left-invariant $\A$-ccc $\sigma$-ideal $\I$ on $X$ is $2$-Steinhaus.
\end{proposition}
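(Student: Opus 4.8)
The plan is to deduce the $2$-Steinhaus property from the classical theorem of Pettis (Lemma~\ref{l:Pettis}), by using the $\A$-ccc and the left-invariance of $\I$ to extract a non-meager $K$-analytic set of the form $AA^{-1}$.

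Fix an $\I$-positive $K$-analytic set $A\subseteq X$; we must check that $(AA^{-1})^{\cdot2}$ is a neighborhood of the identity $e$ of $X$. First I would apply the Kuratowski--Zorn Lemma to choose a maximal disjoint family $\mathcal D\subseteq\{xA:x\in X\}$ of left translates of $A$. Since $\I$ is left-invariant, each translate $xA$ is $\I$-positive (otherwise $A=x^{-1}(xA)\in\I$), and it is $K$-analytic because left translation is a homeomorphism of $X$. Hence $\mathcal D$ is a disjoint subfamily of $\A\setminus\I$, so the $\A$-ccc property of $\I$ forces $\mathcal D$ to be (at most) countable; also $\mathcal D$ is nonempty, as $\{A\}$ is already a disjoint subfamily. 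Writing $\mathcal D=\{x_nA:n\in\w\}$, the maximality of $\mathcal D$ guarantees that for every $x\in X$ the translate $xA$ meets some $x_nA$, say $xa=x_nb$ with $a,b\in A$, whence $x=x_nba^{-1}\in x_n(AA^{-1})$. Therefore $X=\bigcup_{n\in\w}x_n(AA^{-1})$.

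Next I would show that $C\defeq AA^{-1}$ is non-meager and has the Baire property in $X$. The product $A\times A$ is $K$-analytic --- this follows exactly as in the proof of Lemma~\ref{l:KA}(4), where a product of two Lindel\"of \v Cech-complete spaces is shown to be $K$-analytic --- and the restriction of the continuous map $X\times X\to X$, $(x,y)\mapsto xy^{-1}$, to $A\times A$ has image $AA^{-1}$; being a continuous image of a $K$-analytic space, $C=AA^{-1}$ is $K$-analytic, and hence has the Baire property in the Tychonoff space $X$ by Lemma~\ref{l:KA}(2). On the other hand, $X$ is Baire and so non-meager in itself, so the countable cover $X=\bigcup_{n\in\w}x_n(AA^{-1})$ cannot consist entirely of meager sets; thus some $x_n(AA^{-1})$ is non-meager, and so is $AA^{-1}$, left translation being a homeomorphism.

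Finally, Pettis's Lemma~\ref{l:Pettis}, applied to the non-meager Baire-property set $C=AA^{-1}$, gives that $CC^{-1}$ is a neighborhood of $e$. Since $C$ is symmetric ($C^{-1}=(AA^{-1})^{-1}=AA^{-1}=C$), we get $CC^{-1}=CC=(AA^{-1})(AA^{-1})=(AA^{-1})^{\cdot2}$, which is therefore a neighborhood of $e$. As $A$ was an arbitrary $\I$-positive $K$-analytic set, the ideal $\I$ is $2$-Steinhaus. The only step demanding some care is verifying the $K$-analyticity of $AA^{-1}$, i.e. the stability of $K$-analyticity under finite products and continuous images; with that in hand the remainder is a routine Pettis-style covering argument.
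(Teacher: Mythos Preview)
Your argument is correct and essentially identical to the paper's own proof: both use Zorn to extract a maximal disjoint family of left translates of $A$, invoke the $\A$-ccc property to make it countable, conclude $X=\bigcup_n x_n(AA^{-1})$, use Baireness of $X$ to see $AA^{-1}$ is non-meager, observe that $AA^{-1}$ is $K$-analytic (as a continuous image of the $K$-analytic product $A\times A$) and hence has the Baire property by Lemma~\ref{l:KA}(2), and finish with Pettis. The only cosmetic difference is that the paper cites \cite[2.5.5]{RJ} directly for the $K$-analyticity of $A\times A$, whereas you point to the product argument inside the proof of Lemma~\ref{l:KA}(4).
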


\begin{proof} Let $A$ be an $\I$-positive $K$-analytic set in $X$. Since $\I$ is left-invariant, the family $\{xA\}_{x\in X}$ consists of $\I$-positive $K$-analytic sets in $X$. Using Kuratowski--Zorn Lemma, choose a maximal subset $M\subseteq X$ such that $xA\cap yA=\emptyset$ for any distinct elements $x,y\in M$. The $\A$-ccc property of $\I$ ensures that the set $M$ is countable. The maximality of $M$ implies that for every $x\in X$ the set $xA$ intersects the set $MA$  and hence $X=MAA^{-1}$. Since the space $X$ is Baire, the set $AA^{-1}$ is not meager. By Theorem 2.5.5 in \cite{RJ}, the space $A\times A$ is $K$-analytic and hence the set $AA^{-1}$ is $K$-analytic, being the image of the $K$-analytic space $A\times A$ under the continuous map $A\times A\to AA^{-1}$, $(x,y)\mapsto xy^{-1}$. By Lemma~\ref{l:KA}(2), the $K$-analytic set $AA^{-1}$ has the Baire property in $X$. Since $AA^{-1}$ is not meager, we can apply Lemma~\ref{l:Pettis} and conclude that the set $(AA^{-1})^{\cdot 2}=AA^{-1}(AA^{-1})^{-1}$ is a neighborhood of the identity in $X$.
\end{proof}

\begin{lemma}\label{l:Accc} Every ccc $\sigma$-ideal $\I$ with a Borel base on a  Hausdorff space $X$ is $\A$-ccc for the family $\A$ of all $K$-analytic subsets of $X$.
\end{lemma}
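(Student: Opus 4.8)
The plan is to show that a disjoint subfamily of $K$-analytic $\I$-positive sets must be countable, by "absorbing" each such set into a Borel $\I$-positive set and then invoking the ccc property for the family $\Bo$. First I would take any disjoint family $\{A_t\}_{t\in T}$ consisting of $K$-analytic subsets of $X$ with $A_t\notin\I$ for every $t\in T$. The key point is Lemma~\ref{l:KA=>m}: since $\I$ is a ccc $\sigma$-ideal with Borel base on the Hausdorff space $X$, each $K$-analytic set $A_t$ is $\BoI$-measurable. Because $\I$ has a Borel base, $\BoI=\{(B\setminus I)\cup J:B\in\Bo,\ I,J\in\I\}$, so for each $t$ we may write $A_t=(B_t\setminus I_t)\cup J_t$ with $B_t\in\Bo$ and $I_t,J_t\in\I$; replacing $B_t$ by $B_t\setminus I'_t$ where $I'_t\in\Bo\cap\I$ is a Borel set from the base containing $I_t$, we may even assume $B_t\subseteq A_t$ and $A_t\setminus B_t=J_t\in\I$ with $B_t$ Borel.

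Now the issue is that the $B_t$ need not be $\I$-positive, even though the $A_t$ are: we only know $A_t\setminus B_t\in\I$, so if $A_t\notin\I$ then necessarily $B_t\notin\I$ as well, since $A_t=B_t\cup(A_t\setminus B_t)$ and $\I$ is an ideal. Hence each $B_t$ is a Borel $\I$-positive set with $B_t\subseteq A_t$. Since the family $\{A_t\}_{t\in T}$ is disjoint, the family $\{B_t\}_{t\in T}$ is a disjoint subfamily of $\Bo\setminus\I$. By the ccc property of $\I$ (which by definition means $\Bo$-ccc), this family is countable, i.e.\ $\{t\in T:B_t\neq\emptyset\}$ is countable; but $B_t\notin\I$ forces $B_t\neq\emptyset$ for every $t\in T$, so $T$ itself is countable.

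The main obstacle is really just the bookkeeping in passing from the $\BoI$-representation of $A_t$ to a Borel $\I$-positive \emph{subset} of $A_t$; this uses the explicit description of $\BoI$ available because $\I$ has a Borel base, together with the trivial-but-essential observation that an $\I$-positive set differing from a Borel set by an element of $\I$ has an $\I$-positive Borel piece. Everything else is immediate from Lemma~\ref{l:KA=>m} and the definitions.
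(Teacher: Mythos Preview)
Your proof is correct and follows essentially the same approach as the paper's: invoke Lemma~\ref{l:KA=>m} to get $\BoI$-measurability of each $K$-analytic set, use the Borel base of $\I$ to extract a Borel $\I$-positive subset, and apply the ccc property to the resulting disjoint Borel family. The only cosmetic difference is that the paper works with the symmetric-difference description $A_t\Delta B_t\in\I$ and passes to $B_t\setminus I_t$ for a Borel $I_t\in\I$ covering the symmetric difference, whereas you use the $(B\setminus I)\cup J$ representation of $\BoI$; the bookkeeping is equivalent.
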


\begin{proof} Given any disjoint family $\mathcal D$ of $\I$-positive $K$-analytic sets in $X$, we should prove that $\mathcal D$ is countable. By Lemma~\ref{l:KA=>m}, every $K$-analytic set $D\in\mathcal D$ is $\BoI$-measurable and hence $D\Delta B_D\in\I$ for some Borel set $B_D$ in $X$. Since the ideal $\I$ has Borel base, the set $D\Delta B_D$ is contained in a Borel set $I_D\in\I$. Then $B_D\setminus I_D$ is an $\I$-positive Borel set in $D$ and $\{B_D\setminus I_D\}_{B\in\mathcal D}$ is a disjoint family of $\I$-positive Borel sets in $X$. Since the ideal $\I$ is ccc, this family is countable and so is the family $\mathcal D$.
\end{proof} 

Proposition~\ref{p:Steinhaus} and Lemma~\ref{l:Accc} imply the following corollary.

\begin{corollary}\label{c:Steinhaus} Every left-invariant ccc $\sigma$-ideal $\I$ with a Borel base on a Baire topological group $X$ is $2$-Steinhaus.
\end{corollary}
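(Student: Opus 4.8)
The statement is a direct composition of the two results immediately preceding it, so the plan is simply to chain them together. Let $X$ be a Baire topological group and let $\I$ be a left-invariant ccc $\sigma$-ideal with a Borel base on $X$. First I would invoke Lemma~\ref{l:Accc}: since $\I$ is a ccc $\sigma$-ideal with a Borel base on the Hausdorff space $X$, it is $\A$-ccc for the family $\A$ of all $K$-analytic subsets of $X$. (This uses that $X$ is Hausdorff, which holds since topological groups here are Tychonoff, in particular Hausdorff.)

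Having upgraded the hypothesis from ``ccc'' to ``$\A$-ccc'', I would then apply Proposition~\ref{p:Steinhaus} to the left-invariant $\A$-ccc $\sigma$-ideal $\I$ on the Baire topological group $X$, which yields that $\I$ is $2$-Steinhaus, i.e.\ for every $\I$-positive $K$-analytic set $A$ in $X$ the set $(AA^{-1})^{\cdot 2}$ is a neighborhood of the identity. This completes the proof.

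There is essentially no obstacle here: the only thing to check is that the hypotheses of the two auxiliary results match up, and they do verbatim (left-invariance and the Baire property of the group feed into Proposition~\ref{p:Steinhaus}; ccc-ness and the Borel base feed into Lemma~\ref{l:Accc}, whose conclusion is exactly the $\A$-ccc property that Proposition~\ref{p:Steinhaus} requires). The genuine content has already been expended in proving Lemma~\ref{l:Accc} (reducing a disjoint family of $\I$-positive $K$-analytic sets to a disjoint family of $\I$-positive Borel sets via the measurability of $K$-analytic sets from Lemma~\ref{l:KA=>m}) and Proposition~\ref{p:Steinhaus} (the maximal-disjoint-translates argument together with Pettis' Lemma~\ref{l:Pettis}), so the corollary itself needs only a one-line deduction.
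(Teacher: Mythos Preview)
Your proposal is correct and matches the paper's approach exactly: the paper states the corollary as an immediate consequence of Proposition~\ref{p:Steinhaus} and Lemma~\ref{l:Accc}, precisely the two-step chaining you describe.
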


\section{$\AI$-semimeasurable homomorphisms on $K$-analytic groups}\label{s:5}

A topological group is called {\em analytic} (resp. {\em $K$-analytic}) if so is its underlying topological space.

\begin{proposition}\label{p:a} Let $\I$ be a left-invariant $\sigma$-ideal on an analytic group $X$ and $h:X\to Y$ be an $\A^\curlyvee\!\I$-semimeasurable homomorphism onto a topological group $Y$. Then for every neighborhood $U\subseteq Y$ of the identity in $Y$ the preimage $h^{-1}[U]$ is $\I$-positive.
\end{proposition}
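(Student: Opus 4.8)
The plan is to argue by contradiction using the Baire-category structure of the analytic group $X$ together with Theorem~\ref{t:BRZ}. Suppose $h^{-1}[U]\in\I$ for some open neighborhood $U$ of the identity $e_Y$ in $Y$. Since $X$ is analytic, it is in particular Lindel\"of and hence $\w$-narrow; more importantly, $X$ is covered by countably many translates of $h^{-1}[V]$ for any symmetric open $V$ with $VV\subseteq U$ — indeed, pick a countable dense set $D\subseteq X$ (analytic spaces are cosmic, hence separable), and for each $x\in X$ the value $h(x)$ lies in $h(d)V$ for some $d\in D$ (using density of $h[D]$ in $h[X]=Y$... wait, $h$ need not be continuous, so I must be careful here). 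Let me instead use left-invariance directly: the sets $\{x\,h^{-1}[V]\}_{x\in X}$ cover $X$, and since $h^{-1}[V]\subseteq h^{-1}[U]\in\I$, each translate $x\,h^{-1}[V]$ lies in $\I$ by left-invariance. The obstacle is that $X$ may not be covered by \emph{countably many} such translates without extra input.

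To get countability I would use a maximal-disjointness argument combined with the Steinhaus phenomenon, but since we are not told $\I$ is ccc or Steinhaus here, the cleaner route is: pick a symmetric open $V\ni e_Y$ with $VV\subseteq U$, and let $P\defeq h^{-1}[V]$. Then $P\in\I$ and $P=P^{-1}$ (since $h$ is a homomorphism and $V$ is symmetric) and $PP\subseteq h^{-1}[U]\in\I$. Now the family $\J\defeq\{xP: x\in X\}$ is a family of members of $\I$ covering $X$, so $\bigcup\J=X\notin\I$. If this family were point-finite we could apply Theorem~\ref{t:BRZ} to get a subfamily $\J'$ with $\bigcup\J'$ not $\AI$-semimeasurable — but $\bigcup\J'$ is a union of left translates of $P=h^{-1}[V]$, i.e. a set of the form $h^{-1}[S]$ for $S=\bigcup_{x}h(x)V\subseteq Y$, which is $\AI$-semimeasurable because it is a preimage of an open set and $h$ is $\AI$-semimeasurable. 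Wait — $h^{-1}[S]$ with $S$ open gives semimeasurability, so $\bigcup\J'$ would be semimeasurable, contradicting Theorem~\ref{t:BRZ}. So the whole proof reduces to arranging point-finiteness.

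The point-finiteness is the main obstacle and is handled as follows: replace $P$ by a subset that is a \emph{selector-translate partition}. Choose a coset-style decomposition: consider the equivalence relation $x\sim x'$ iff $x^{-1}x'\in PP$ (this need not be transitive, so instead) — better: let $N\defeq h^{-1}[V]$ and note $h^{-1}[\{e_Y\}]\subseteq N$ is a normal subgroup (kernel pieces), but the slick fix is: by left-invariance the translates $xN$ for $x$ ranging over a set of representatives of $X/(NN)$-classes obtained from a maximal $NN$-separated set give a point-finite (in fact disjoint) family. Concretely, using Kuratowski--Zorn pick $M\subseteq X$ maximal with $xN\cap x'N=\emptyset$ for distinct $x,x'\in M$; then $\{xN\}_{x\in M}$ is a \emph{disjoint} (hence point-finite) subfamily of $\I$, and maximality gives $X=MNN^{-1}=M\cdot NN\subseteq M\,h^{-1}[U]$, so $\bigcup_{x\in M}x\,h^{-1}[U]=X\notin\I$. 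Thus $\{x\,h^{-1}[U]\}_{x\in M}$ need not be disjoint but $\{xN\}_{x\in M}$ is, and $\bigcup_{x\in M}xN\notin\I$ would be needed — which may fail. I therefore expect the author instead replaces $\I$-positivity of $h^{-1}[U]$ by working with the \emph{coset filtration}: since $h$ is a homomorphism, $X=\bigcup_{x\in T}x\,h^{-1}[U']$ for $U'$ with $U'U'^{-1}\subseteq U$ \emph{where $T$ is a transversal and the translates $x\,h^{-1}[U']$, $x\in T$, are pairwise disjoint} (two such meet iff $h(x)^{-1}h(x')\in U'U'^{-1}\subseteq U$), so point-finiteness holds automatically. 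Then Theorem~\ref{t:BRZ} applied to $\J=\{x\,h^{-1}[U']\}_{x\in T}\subseteq\I$ with $\bigcup\J=X\notin\I$ produces $\J'$ with $\bigcup\J'$ non-semimeasurable, yet $\bigcup\J'=h^{-1}[\bigcup_{x}h(x)U']$ is a preimage of an open set under the $\AI$-semimeasurable map $h$, a contradiction. Hence $h^{-1}[U]\notin\I$. The delicate point to nail down carefully is the disjointness/point-finiteness of $\{x\,h^{-1}[U']\}_{x\in T}$ and the identification of its subunions as preimages of open subsets of $Y$; everything else is bookkeeping with left-invariance.
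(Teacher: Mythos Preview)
Your proposal correctly identifies the overall architecture --- left-invariance pushes $h^{-1}[U]\in\I$ to all its translates, Theorem~\ref{t:BRZ} applied to a point-finite subfamily of $\I$ with $\I$-positive union yields a non-$\AI$-semimeasurable subunion, and that subunion should be of the form $h^{-1}[\text{open}]$, contradicting semimeasurability of $h$. The problem is in the final paragraph, where you assert the existence of a transversal $T$ with $X=\bigcup_{x\in T}x\,h^{-1}[U']$ \emph{and} the translates $\{x\,h^{-1}[U']\}_{x\in T}$ pairwise disjoint. These two requirements are incompatible in general: disjoint left translates of an open set containing the identity do not cover a topological group unless that set is a coset of an open subgroup (think of translates of $(-1,1)$ in $\IR$). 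You yourself flagged this two paragraphs earlier --- a maximal disjoint family $\{xN\}_{x\in M}$ only gives $X=M\cdot NN^{-1}$, and there is no reason $\bigcup_{x\in M}xN$ should be $\I$-positive --- but the final paragraph does not actually resolve it; it just restates the wish.

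The paper supplies the missing idea: rather than translates of a single small set, build a $\sigma$-discrete open cover of $Y$ by sets of small diameter. Use Markov's theorem to find a left-invariant continuous pseudometric $\rho$ on $Y$ with $\{y:\rho(y,e)<1\}\subseteq U$, pass to the associated metric quotient $\tilde Y$, and invoke paracompactness of metric spaces (Stone's theorem) to obtain a $\sigma$-discrete open cover $\V=\bigcup_n\V_n$ of $Y$ by sets of $\rho$-diameter $<1$. Each $V\in\V$ then satisfies $v^{-1}V\subseteq U$ for any $v\in V$, hence $h^{-1}[V]\in\I$. For each $n$ the family $\{h^{-1}[V]:V\in\V_n\}$ is disjoint (so point-finite), and since $X=\bigcup_n h^{-1}[\bigcup\V_n]\notin\I$, some layer has $h^{-1}[\bigcup\V_n]\notin\I$. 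Now Theorem~\ref{t:BRZ} applies to that layer, and the resulting bad subunion equals $h^{-1}[\bigcup\V']$ for an open $\bigcup\V'\subseteq Y$ --- the desired contradiction. The $\sigma$-discrete refinement is exactly what lets you have disjointness \emph{within each layer} while the full family still covers; this is the step your argument lacks.
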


\begin{proof} To derive a contradiction, assume that $h^{-1}[U]\in \I$ for some open neighborhood  $U\subset Y$ of the identity $e$ of the group $Y$.  
By Markov's Theorem \cite[3.9]{AT}, there exists a left-invariant continuous pseudometric $\rho $ on $Y$ such that $\{y\in Y:\rho(y,e)<1\}\subseteq U$. The pseudometric $\rho$ determines an equivalence relation $\sim$ on $Y$ such that $x\sim y$ iff $\rho(x,y)=0$. Let $q:Y\to \tilde Y$ be the quotient map to the quotient set $\tilde Y=Y/_\sim$. The pseudometric $\rho$ determines a unique  metric $\tilde\rho$ on $\tilde Y$ such that $\tilde\rho(q(x),q(y))=\rho(x,y)$ for all $x,y\in Y$.  By the paracompactness of the metric space $(\tilde Y,\tilde\rho)$, there exists a $\sigma$-discrete cover $\tilde\V$ of $\tilde Y$ by open sets of $\tilde\rho$-diameter $<1$.
Then $\V=\{q^{-1}[V]:V\in\tilde\V\}$ is a $\sigma$-discrete cover of $Y$ by open sets of $\rho$-diameter $<1$. 

 For every $V\in\V$ chose a point $v\in V$ and observe that $v^{-1}V\subseteq \{y\in Y:\rho(y,e)<1\}\subseteq U$. Choose any point $x\in h^{-1}(v)$ (which exists by the surjectivity of $h$) and conclude that $x^{-1}h^{-1}[V]=h^{-1}[v^{-1}V]\subseteq h^{-1}[U]\in\I$ and $h^{-1}[V]\in\I$ (by the left-invariance of the ideal $\I$).
 
 Write the $\sigma$-discrete family $\V$ as the countable union $\bigcup_{n\in\w}\V_n$ of discrete families $\V_n$. Since $X=\bigcup_{n\in\w}h^{-1}[\textstyle\bigcup\V_n]\notin\I$, for some $n\in\w$ the set $h^{-1}[\bigcup\V_n]=\bigcup_{V\in\V_n}h^{-1}[V]$ does not belong to the $\sigma$-ideal $\I$. By Theorem~\ref{t:BRZ}, for some subfamily $\V'\subseteq\V_n$ the union $\bigcup_{V\in\V'}h^{-1}[V]=h^{-1}[\bigcup\V']$ is not $\AI$-semimeasurable, which contradicts the $\AI$-semimeasurability of the homomorphism $h$.
\end{proof}

Now we extend Proposition~\ref{p:a} to $K$-analytic groups. 

\begin{proposition}\label{p:KA} Let  $\I$ be a left-invariant $\sigma$-ideal with a functionally arbitrary base on a $K$-analytic group $X$.  For any $\AI$-semimeasurable homomorphism $h:X\to Y$ to a topological group $Y$ and every neighborhood $U\subseteq Y$ of the identity, the preimage $h^{-1}[U]$ is $\I$-positive.
\end{proposition}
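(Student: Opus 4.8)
The strategy is to reduce the $K$-analytic case to the analytic case already handled in Proposition~\ref{p:a}. Since $X$ is $K$-analytic, there is a continuous surjection $\varphi\colon Z\to X$ from a Lindel\"of \v Cech-complete space $Z$. I would like to transport the ideal $\I$ and the homomorphism $h$ along $\varphi$ to the analytic world, but $Z$ itself need not be a group, so the transported object cannot simply be fed into Proposition~\ref{p:a}. Instead, the plan is to work inside $X$ directly and exploit the extra structure the $K$-analyticity gives: a family $(F_s)_{s\in\w^{<\w}}$ of closed sets in a compactification with $X=\bigcup_{s\in\w^\w}\bigcap_n F_{s\restriction n}$ (Lemma~\ref{l:KA}(3)), and the fact that the ideal has a functionally arbitrary base, so each member of $\I$ sits inside a set of the form $f^{-1}[B]$ for a continuous $f\colon X\to\IR^\w$ and arbitrary $B\subseteq\IR^\w$.

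The key step will be the same contradiction scheme as in Proposition~\ref{p:a}: assume $h^{-1}[U]\in\I$ for an open neighborhood $U$ of the identity $e\in Y$; use Markov's theorem to get a left-invariant continuous pseudometric $\rho$ on $Y$ with $\{y:\rho(y,e)<1\}\subseteq U$; pass to the metric quotient $q\colon Y\to\tilde Y$; take a $\sigma$-discrete open cover $\tilde\V$ of $\tilde Y$ by sets of diameter $<1$ and pull it back to a $\sigma$-discrete cover $\V=\bigcup_{n}\V_n$ of $Y$; and observe, exactly as before, that left-invariance of $\I$ forces $h^{-1}[V]\in\I$ for every $V\in\V$. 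Writing $X=\bigcup_n h^{-1}[\bigcup\V_n]$ and using $X\notin\I$, fix $n$ with $h^{-1}[\bigcup\V_n]\notin\I$, so $\J\defeq\{h^{-1}[V]:V\in\V_n\}$ is a disjoint (hence point-finite) subfamily of $\I$ with $\bigcup\J\notin\I$. The point where the $K$-analytic case genuinely differs from the analytic one is that I cannot invoke Theorem~\ref{t:BRZ} on $X$, because that theorem is stated for analytic spaces only. So the real work is to produce an analytic subspace of $X$ on which the relevant union is still non-$\I$-positive and still fails $\AI$-semimeasurability — or, alternatively, to push the problem down to $\IR^\w$.

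The main obstacle, and the heart of the proof, is this descent to an analytic space. I expect the argument to go as follows. Since $\bigcup\J=h^{-1}[\bigcup\V_n]\notin\I$, it is an $\I$-positive set, but it need not be $K$-analytic. However, the cover $\V_n$ is indexed by a set that can be taken countable after a further reduction: because $\bigcup\V_n$ is a union of a discrete family of open sets in $Y$, and we only care about the preimage, I would replace $\V_n$ by a countable subfamily whose preimages still union to an $\I$-positive set — this uses that $X$ is Lindel\"of (being $K$-analytic) together with $X\notin\I$ and the fact that a countable union of $\I$-sets is in $\I$, so if every countable subunion were in $\I$ the whole union would be too once we note $X$ is covered by countably many of the $h^{-1}[V]$ by the Lindel\"of property applied to the open cover $\{h^{-1}[V]:V\in\V_n\}\cup\{X\setminus\overline{h^{-1}[\bigcup\V_n]}\}$ — I need to be a little careful here, but some such bookkeeping makes $\V_n$ effectively countable. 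Once $\V_n=\{V_k:k\in\w\}$ is countable, choose for each $k$ a point $y_k\in V_k$ and a point $x_k\in h^{-1}(y_k)$; the set $D\defeq\bigcup_k x_k^{-1}h^{-1}[V_k]=\bigcup_k h^{-1}[y_k^{-1}V_k]\subseteq h^{-1}[U]\in\I$. This $D$ lives inside the single $\I$-set $h^{-1}[U]$, which by the functionally arbitrary base assumption sits inside $f^{-1}[B]$ for some continuous $f\colon X\to\IR^\w$; the image $f[X]$ is analytic, and I would carry the family $\{f[x_k^{-1}h^{-1}[V_k]]\}_k$ — or rather a suitable disjointification via a second coordinate — over to $\IR^\w$, apply Theorem~\ref{t:BRZ} there to the induced $\sigma$-ideal $\{C\subseteq\IR^\w: f^{-1}[C]\in\I\}$ (which is a $\sigma$-ideal on the analytic space $f[X]$, with $f[X]\notin$ it since $X\notin\I$), get a subfamily whose union fails $\AI$-semimeasurability downstairs, and then pull back the failure of semimeasurability through the continuous map $f$ using Lemma~\ref{l:KA}(4) to detect $K$-analytic $\I$-positive witnesses. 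The delicate points are (i) ensuring the transported ideal is genuinely a $\sigma$-ideal on the analytic target and not the improper one, (ii) handling the point-finiteness hypothesis of Theorem~\ref{t:BRZ} after transport (disjointness is preserved by taking images only if the fibres are respected, so I would add a disjoint coordinate, replacing $f$ by $x\mapsto(f(x),k)$ on each piece, formalized by working with $\IR^\w\times\w$), and (iii) the bookkeeping that lets me assume $\V_n$ countable. Modulo these, the proof is a faithful $K$-analytic upgrade of Proposition~\ref{p:a}.
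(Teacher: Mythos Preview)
Your plan correctly identifies the core difficulty: Theorem~\ref{t:BRZ} is stated for analytic spaces, so the argument of Proposition~\ref{p:a} does not transplant verbatim to a $K$-analytic group $X$. Your proposed workaround, however, has two genuine gaps.

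First, the Lindel\"of reduction of $\V_n$ to a countable subfamily does not work as written. The sets $h^{-1}[V]$ for $V\in\V_n$ need not be open in $X$ (the homomorphism $h$ is only $\AI$-semimeasurable), so $\{h^{-1}[V]:V\in\V_n\}\cup\{X\setminus\overline{h^{-1}[\bigcup\V_n]}\}$ is not an open cover and the Lindel\"of property says nothing about it. Nor does a cofinality argument help: the union over an uncountable $\V_n$ of sets in the $\sigma$-ideal $\I$ can be $\I$-positive while every countable subunion remains in $\I$.

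Second, even granting countability, the descent via a continuous $f\colon X\to\IR^\w$ does not carry the structure you need. The images $f[h^{-1}[V_k]]$ are typically not disjoint, and your ``add a discrete coordinate'' map $x\mapsto(f(x),k)$ is only defined on the disjoint union $\bigsqcup_k h^{-1}[V_k]$, not on $X$, so it is not a continuous map on a $K$-analytic (let alone analytic) space. More fundamentally, non-$\A^\curlyvee\!\J'$-semimeasurability of a union downstairs in $f[X]$ does not pull back cleanly: for a non-injective $f$, the sets $f^{-1}[A\cap\bigcup_{k\in S}f[h^{-1}[V_k]]]$ and $f^{-1}[A]\cap\bigcup_{k\in S}h^{-1}[V_k]$ differ, so a witnessing analytic $A$ downstairs need not produce a witnessing $K$-analytic set upstairs against $h^{-1}[\bigcup_{k\in S}V_k]$.

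The paper's argument avoids all of this by descending along a \emph{group homomorphism} rather than a mere continuous map. The key tool you are missing is Tkachenko's $\IR$-factorizability theorem for Lindel\"of groups: any continuous $f\colon X\to\IR^\w$ factors as $f=g\circ p$ with $p\colon X\to G$ a continuous surjective homomorphism onto a separable metrizable (hence analytic) group $G$. Applying this to the $f$ coming from the functionally arbitrary base for $h^{-1}[U]$ gives $p^{-1}[p[h^{-1}[U]]]\in\I$. One then quotients $Y$ by the closure of $h[\Ker p]$, obtaining a homomorphism $\hbar\colon G\to Y/H$ that inherits $\A^\curlyvee\!\J$-semimeasurability (for the pushed ideal $\J=\{J\subseteq G:p^{-1}[J]\in\I\}$, which is left-invariant because $p$ is a homomorphism), and applies Proposition~\ref{p:a} directly to $\hbar$ on the analytic group $G$. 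The contradiction then comes from a short chain of inclusions, with no need to manipulate the $\sigma$-discrete cover at the level of $X$.
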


\begin{proof} To derive a contradiction, assume that $h^{-1}[U]\in\I$ for some open neighborhood $U$ of the identity $e_Y$ in the topological group $Y$. Since the ideal $\I$ has a functionally arbitrary base, for the set $P\defeq h^{-1}[U]\in\I$ there exists a continuous function $f:X\to \IR^\w$ such that $f^{-1}\big[f[P]\big]\in \I$. By the Lindel\"of property of the $K$-analytic group $X$ and Tkachenko's Theorem~\cite[8.1.6]{AT} on $\IR$-factirizability of Lindel\"of topological groups, there exists a continuous homomorphism $p:X\to G$ onto a metrizable separable topological group $G$ and a continuous function $g:G\to \IR^\w$ such that $f=g\circ p$. Then $p^{-1}[p[P]]\subseteq f^{-1}[f[P]]\in\I$. Being a continuous image of the $K$-analytic group $X$, the topological group $G$ is $K$-analytic. By Lemma~\ref{l:KA}(1), the metrizable separable topological space $G$ is analytic. Let $K=p^{-1}(e_G)$ be the kernel of the homomorphism $p$.

Replacing $Y$ by its subgroup $h[X]$, we can assume that the homomorphism $h:X\to Y$ is surjective. Then the subgroup $h[K]$ of $Y=h[X]$ is normal and so is its closure $H=\overline{h[K]}$ in $Y$. Choose an open neighborhood $V$ of the identity in $Y$ such that $V^{-1}V\subseteq U$ and observe that
$$HV=\overline{h[K]}V\subseteq h[K]V^{-1}V\subseteq h[K]U.$$

Let $Z=Y/H$ be the quotient group of the topological group $Y$ and $q:Y\to Z$ be the quotient homomorphism. Since the kernel of the homomorphism $p:X\to G$ is contained in the kernel of the homomorphism $q\circ h:X\to Z$, there exists a unique homomorphism $\hbar:G\to Z$ such that $\hbar \circ p=q\circ h$. We claim that the homomorphism $\hbar $ is $\A^\curlyvee\!\J$-semimeasurable for the left-invariant $\sigma$-ideal 
$$\J\defeq\{J\subseteq G:p^{-1}[J]\in\I\}$$
on the  group $G$. Given any open set $W\subseteq Z$ and a $K$-analytic subspace $A\subseteq G$ with $A\cap \hbar^{-1}[W]\notin\J$, we should find a $K$-analytic subspace $A'\subseteq A\cap \hbar^{-1}[W]$ such that $A'\notin\J$. By Lemma~\ref{l:KA}(4), the preimage $p^{-1}[A]$ is a $K$-analytic subspace of the $K$-analytic group $X$. It follows from $A\cap \hbar^{-1}[W]\notin\J$ and $q\circ h=\hbar\circ p$ that $$p^{-1}[A]\cap h^{-1}[q^{-1}[W]]=p^{-1}[A\cap \hbar^{-1}[W]]\notin\I.$$ Since the homomorphism $h$ is $\AI$-semimeasurable, there exists a $K$-analytic subspace $A''\subseteq p^{-1}[A]\cap h^{-1}[q^{-1}[W]]$ such that $A''\notin \I$. Then $A'=p[A'']$ is a $K$-analytic subspace of $A\cap \hbar^{-1}[W]$ such that $A'\notin\J$. This completes the proof of the $\A^\curlyvee\!\J$-semimeasurability of the homomorphism $\hbar$. 

By Proposition~\ref{p:a}, 
$\hbar^{-1}[q[V]]\notin\J$ and hence $p^{-1}\big[\hbar^{-1}[q[V]]\big]\notin\I$.
On the other hand,
$$p^{-1}[\hbar^{-1}[q[V]]=(q\circ h)^{-1}[q[V]]=h^{-1}[HV]\subseteq h^{-1}[h[K]U]=Kh^{-1}[U]=KP=p^{-1}[p[P]]\in\I,$$
which is a desired contradiction that completes the proof.
\end{proof}

\begin{proposition}\label{p:KAS} Let  $\I$ be a Steinhaus left-invariant $\sigma$-ideal with a functionally arbitrary base on a $K$-analytic group $X$. Every $\AI$-semimeasurable homomorphism $h:X\to Y$ to a topological group $Y$ is continuous.
\end{proposition}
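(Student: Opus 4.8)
The plan is to deduce continuity of $h$ at the identity from the Steinhaus property, using Proposition~\ref{p:KA} to guarantee that relevant preimages are $\I$-positive. Since $h$ is a homomorphism, it suffices to check continuity at the identity $e_X$ of $X$: given an open neighborhood $U$ of the identity $e_Y$ in $Y$, I must produce an open neighborhood $O$ of $e_X$ in $X$ with $h[O]\subseteq U$. Fix $n\in\IN$ such that $\I$ is $n$-Steinhaus. First I would choose, using continuity of the group operations in $Y$ and Markov's theorem (or just iterated halving of neighborhoods), an open symmetric neighborhood $V$ of $e_Y$ such that $(VV^{-1})^{\cdot n}\subseteq U$; more carefully, I want $V$ with $V=V^{-1}$ and $V^{\cdot 2n}\subseteq U$, so that $(VV^{-1})^{\cdot n}=V^{\cdot 2n}\subseteq U$.

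Next, set $A\defeq h^{-1}[V]$. By Proposition~\ref{p:KA}, applied to the neighborhood $V$ of $e_Y$, the set $A=h^{-1}[V]$ is $\I$-positive. The key extra input I need is that $A$ is $K$-analytic: since $V$ is (functionally) open in $Y$ and $X$ is $K$-analytic, I want to invoke the $\AI$-semimeasurability of $h$ to extract from $A$ an $\I$-positive $K$-analytic subset. Indeed, taking the $K$-analytic set $A=X$ itself in the definition of $\AI$-semimeasurability with the open set $V\subseteq Y$: since $X\cap h^{-1}[V]=h^{-1}[V]\notin\I$ by Proposition~\ref{p:KA}, there exists an $\I$-positive $K$-analytic set $B\subseteq h^{-1}[V]$. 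Now I apply the $n$-Steinhaus property of $\I$ to this $B$: the set $(BB^{-1})^{\cdot n}$ is a neighborhood of $e_X$ in $X$, so there is an open neighborhood $O\subseteq(BB^{-1})^{\cdot n}$ of $e_X$.

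Finally, I verify $h[O]\subseteq U$. For $x\in O\subseteq(BB^{-1})^{\cdot n}$, write $x$ as a product of $n$ factors each of the form $b_ib_i'^{-1}$ with $b_i,b_i'\in B$. Applying the homomorphism $h$, and using $h[B]\subseteq V$ together with $V=V^{-1}$, each $h(b_i)h(b_i')^{-1}\in VV^{-1}=V^{\cdot 2}$, hence $h(x)\in(V^{\cdot 2})^{\cdot n}=V^{\cdot 2n}\subseteq U$. This gives continuity of $h$ at $e_X$, and translating by group elements yields continuity everywhere.

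The main obstacle, and the point requiring care, is the extraction of the $\I$-positive \emph{$K$-analytic} set $B$ inside $h^{-1}[V]$: this is exactly where the hypothesis of $\AI$-semimeasurability (rather than mere $\BaI$-measurability) is used, combined essentially with Proposition~\ref{p:KA} to know that $h^{-1}[V]\notin\I$ in the first place — without $\I$-positivity of the preimage, the semimeasurability definition would be vacuous and give nothing. The remaining steps (choosing $V$, applying the Steinhaus property, the final product computation) are routine.
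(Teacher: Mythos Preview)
Your proof is correct and follows essentially the same route as the paper's: use Proposition~\ref{p:KA} to get $h^{-1}[V]\notin\I$, apply $\AI$-semimeasurability with the $K$-analytic set $X$ to extract an $\I$-positive $K$-analytic $B\subseteq h^{-1}[V]$, and then invoke the $n$-Steinhaus property to conclude that $(BB^{-1})^{\cdot n}\subseteq h^{-1}[U]$ is a neighborhood of the identity. The only cosmetic difference is that the paper does not bother making $V$ symmetric, working directly with $(VV^{-1})^{\cdot n}\subseteq U$.
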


\begin{proof} The continuity of the homomorphism $h$ will follow as soon as we check that for any open neighborhood $U$ of the identity in $Y$ the preimage $h^{-1}[U]$ is a neighborhood of the identity in $X$. By our assumption, the ideal $\I$ is Steinhaus and hence $n$-Steinhaus for some $n\in\IN$. By the continuity of the multiplication and inversion in $Y$, there exists an open neighborhood $V$ of the identity  in $Y$ such that $(VV^{-1})^{\cdot n}\subseteq U$. Proposition~\ref{p:KA} ensures that $h^{-1}[V]\notin\I$. By the $\AI$-semimeasurability of $h$, the set $h^{-1}[V]\notin\I$ contains an $\I$-positive $K$-analytic set $A$. The $n$-Steinhaus property of the ideal $\I$ ensures that the set $(AA^{-1})^{\cdot n}$ is a neighborhood of the identity in $X$. It follows from $h[A]\subseteq V$ that $h[(AA^{-1})^{\cdot n}]\subseteq (VV^{-1})^{\cdot n}\subseteq U$ and hence $h^{-1}[U]\supseteq (AA^{-1})^{\cdot n}$ is a neighborhood of the identity in $X$.
\end{proof}

\begin{theorem}\label{t:KA2} Let  $\I$ be a  left-invariant Steinhaus $\sigma$-ideal with a functionally coanalytic base on a $K$-analytic group $X$ such that $\Bo\subseteq\BaI$. For a homomorphism $h:X\to Y$ to a topological group $Y$ the following conditions are equivalent:
\begin{enumerate}
\item $h$ is continuous;
\item $h$ is $\BoI$-measurable;
\item $h$ is $\BaI$-measurable;
\item $h$ is $\AI$-semimeasurable.
\end{enumerate}
\end{theorem}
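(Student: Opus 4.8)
The plan is to prove the four-way equivalence by running the cycle $(1)\Ra(2)\Ra(3)\Ra(4)\Ra(1)$, noting in advance that three of the four links are purely formal once the machinery of the previous sections is available.

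First, $(1)\Ra(2)$ is immediate: if $h$ is continuous, then for every open $U\subseteq Y$ the preimage $h^{-1}[U]$ is open, hence Borel, hence $\BoI$-measurable. For $(2)\Ra(3)$ I would use the standing hypothesis $\Bo\subseteq\BaI$: since $\BaI$ is by definition a $\sigma$-algebra containing both $\Ba$ and $\I$, and since $\Bo\subseteq\BaI$, it contains $\Bo\cup\I$ and therefore the whole $\sigma$-algebra $\BoI$ generated by that union; thus every $\BoI$-measurable preimage $h^{-1}[U]$ is automatically $\BaI$-measurable. For $(3)\Ra(4)$ I would apply Proposition~\ref{p:m=>sm} preimage by preimage: the functionally coanalytic base of $\I$ ensures that every $\BaI$-measurable subset of $X$ is $\AI$-semimeasurable, so if each $h^{-1}[U]$ is $\BaI$-measurable then $h$ is $\AI$-semimeasurable.

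The one implication with genuine content is $(4)\Ra(1)$, and here the work has already been carried out in Proposition~\ref{p:KAS}. I would only need to verify that its hypotheses hold in the present situation: $X$ is $K$-analytic, $\I$ is left-invariant and Steinhaus by assumption, and a functionally coanalytic base is in particular a functionally arbitrary base (a coanalytic subset of $\IR^\w$ is a subset of $\IR^\w$). Proposition~\ref{p:KAS} then yields the continuity of the $\AI$-semimeasurable homomorphism $h$, which closes the cycle and completes the proof.

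So Theorem~\ref{t:KA2} itself requires no new idea: it is the assembly of Propositions~\ref{p:m=>sm} and \ref{p:KAS} into an equivalence, with the extra hypothesis $\Bo\subseteq\BaI$ serving only to reconcile $\BoI$- and $\BaI$-measurability. The real obstacle, such as it is, lies upstream inside Proposition~\ref{p:KAS}: that argument descends from the $K$-analytic to the analytic setting via Tkachenko's $\IR$-factorizability theorem, passes through Propositions~\ref{p:KA} and \ref{p:a}, and ultimately rests on the Banakh--Ra\l owski--\.Zeberski Theorem~\ref{t:BRZ} on non-$\AI$-semimeasurable unions of point-finite subfamilies, together with the Steinhaus property (delivered in applications by Corollary~\ref{c:Steinhaus}). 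Granting all of that, the proof of Theorem~\ref{t:KA2} is a short formal verification of the four implications above.
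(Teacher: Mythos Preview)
Your proof is correct and follows exactly the paper's own argument: the cycle $(1)\Ra(2)\Ra(3)\Ra(4)\Ra(1)$ with $(2)\Ra(3)$ from the hypothesis $\Bo\subseteq\BaI$, $(3)\Ra(4)$ from Proposition~\ref{p:m=>sm}, and $(4)\Ra(1)$ from Proposition~\ref{p:KAS}. Your added remark that a functionally coanalytic base is in particular a functionally arbitrary base is the one detail the paper leaves implicit when invoking Proposition~\ref{p:KAS}.
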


\begin{proof} The implication $(1)\Ra(2)$ is trivial and $(2)\Ra(3)$ follows from the assumption $\Bo\subseteq\BaI$, which implies $\BoI=\BaI$. The implications $(3)\Ra(4)\Ra(1)$ follow from Propositions~\ref{p:m=>sm} and Proposition~\ref{p:KAS}.
\end{proof}

Corollary~\ref{c:Steinhaus} and Theorem~\ref{t:KA2} imply the following theorem that will be used in the proofs of Theorems~\ref{t:main-H} and \ref{t:main-BP}.

\begin{theorem}\label{t:KA3} Let  $\I$ be a  left-invariant ccc $\sigma$-ideal with a functionally Borel base on a Baire $K$-analytic group $X$ such that $\Bo\subseteq\BaI$. For a homomorphism $h:X\to Y$ to a topological group $Y$ the following conditions are equivalent:
\begin{enumerate}
\item $h$ is continuous;
\item $h$ is $\BoI$-measurable;
\item $h$ is $\BaI$-measurable;
\item $h$ is $\AI$-semimeasurable.
\end{enumerate}
\end{theorem}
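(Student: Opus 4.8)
The plan is to derive Theorem~\ref{t:KA3} directly from Theorem~\ref{t:KA2} by verifying that its hypotheses transfer. The only gap between the two statements is the strength of the Steinhaus assumption: Theorem~\ref{t:KA3} assumes merely that $\I$ is a left-invariant ccc $\sigma$-ideal with a functionally Borel base on a \emph{Baire} $K$-analytic group $X$, whereas Theorem~\ref{t:KA2} requires $\I$ to be Steinhaus with a functionally coanalytic base. First I would observe that every functionally Borel set is functionally coanalytic (as noted in the excerpt, since Borel subsets of Polish spaces are coanalytic), so the functionally Borel base hypothesis immediately upgrades to a functionally coanalytic base. Next I would invoke Corollary~\ref{c:Steinhaus}: a left-invariant ccc $\sigma$-ideal with a Borel base (in particular, with a functionally Borel base, since functionally Borel sets are Borel) on a Baire topological group is $2$-Steinhaus, hence Steinhaus. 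The remaining hypothesis $\Bo\subseteq\BaI$ is carried over verbatim, and $X$ being Baire and $K$-analytic in particular makes it a $K$-analytic group, so all hypotheses of Theorem~\ref{t:KA2} are met.

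With these observations in place, the proof is a one-line deduction: applying Theorem~\ref{t:KA2} to the ideal $\I$ on $X$ yields the equivalence of conditions $(1)$–$(4)$ for any homomorphism $h:X\to Y$, which is exactly the conclusion of Theorem~\ref{t:KA3}. Concretely:

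\begin{proof}
Since every functionally Borel set is functionally coanalytic, the ideal $\I$ has a functionally coanalytic base. Being a Baire $K$-analytic group, $X$ is a Baire topological group, so by Corollary~\ref{c:Steinhaus} the left-invariant ccc $\sigma$-ideal $\I$ (which has a Borel base, as functionally Borel sets are Borel) is $2$-Steinhaus, in particular Steinhaus. Thus $\I$ is a left-invariant Steinhaus $\sigma$-ideal with a functionally coanalytic base on the $K$-analytic group $X$ satisfying $\Bo\subseteq\BaI$, and Theorem~\ref{t:KA2} applies, giving the equivalence of conditions $(1)$--$(4)$.
\end{proof}

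I do not expect any genuine obstacle here: the content of Theorem~\ref{t:KA3} is entirely front-loaded into Theorem~\ref{t:KA2}, Corollary~\ref{c:Steinhaus}, and the elementary implications between the function-class hierarchies (functionally Borel $\Rightarrow$ functionally coanalytic, functionally Borel $\Rightarrow$ Borel). The one point worth being careful about is the bookkeeping on base types — making sure that ``functionally Borel base'' really does imply both ``Borel base'' (needed to apply Corollary~\ref{c:Steinhaus}) and ``functionally coanalytic base'' (needed to apply Theorem~\ref{t:KA2}) — but both implications are immediate from the definitions and the remarks already recorded in Section~3 of the excerpt. Hence the proof is essentially a citation of the two earlier results chained together.
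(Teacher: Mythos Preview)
Your proposal is correct and matches the paper's own proof exactly: the paper simply states that Corollary~\ref{c:Steinhaus} and Theorem~\ref{t:KA2} imply Theorem~\ref{t:KA3}, and your argument spells out precisely the base-type bookkeeping (functionally Borel $\Rightarrow$ Borel for Corollary~\ref{c:Steinhaus}, functionally Borel $\Rightarrow$ functionally coanalytic for Theorem~\ref{t:KA2}) needed to chain these two results together.
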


\section{Characterizing Baire $K$-analytic groups}\label{s:6}

Theorem~\ref{t:KA3} motivates the problem of deeper studying the structure of Baire $K$-analytic groups. We shall prove that the class of such groups coincides with the class of \v Cech-complete groups which are $\w$-narrow, Lindel\"of or countably cellular. %By a {\em \v Cech-complete group} we understand a topological group whose underlying topological space is \v Cech-complete. 

A topological space $X$
\begin{itemize}
\item is {\em countably cellular} if every disjoint family of open sets in $X$ is countable;
\item has {\em countable pseudocharacter} if  each singleton $\{x\}\subseteq X$ is a $G_\delta$-set in $X$.
\end{itemize}

\begin{theorem}\label{t:Cech} For a topological group $X$ the following conditions are equivalent:
\begin{enumerate}
\item $X$ is $K$-analytic and Baire;
\item $X$ contains a compact normal subgroup $H$ such that the quotient group $X/H$ is Polish;
\item $X$ is $\w$-narrow and \v Cech-complete;
\item $X$ is Lindel\"of and \v Cech-complete;
\item $X$ is countably cellular and \v Cech-complete.
\end{enumerate}
If the group $G$ has countable pseudocharacter, then the conditions \textup{(1)--(5)} are equivalent to
\begin{itemize}
\item[(6)] $X$ is Polish.
\end{itemize}
\end{theorem}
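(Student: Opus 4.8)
The plan is to make condition $(4)$ the hub, proving $(2)\Leftrightarrow(4)\Leftrightarrow(1)$, $(3)\Leftrightarrow(4)$ and $(5)\Leftrightarrow(4)$, with the implication $(1)\Ra(4)$ carrying essentially all the difficulty; the passage to $(6)$ is then a short afterthought. For the ``soft'' implications: if $X$ has a compact normal subgroup $H$ with $X/H$ Polish, then $X\to X/H$ is open and is closed with compact fibres, hence perfect, so $X$ is a perfect preimage of a Lindel\"of \v Cech-complete space and therefore is itself Lindel\"of and \v Cech-complete \cite{Eng}, hence Baire, $\w$-narrow, countably cellular, and (being Lindel\"of \v Cech-complete) $K$-analytic; this gives $(2)\Ra\{(1),(3),(4),(5)\}$. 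The implication $(4)\Ra(1)$ is trivial, $(4)\Ra(3),(5)$ holds since Lindel\"of groups are $\w$-narrow and Lindel\"of spaces are countably cellular, and $(4)\Ra(2)$ because a \v Cech-complete group is feathered \cite{AT}, a feathered $\w$-narrow group contains a compact normal subgroup $N$ with $X/N$ metrizable \cite{AT}, and then $X/N$ --- being metrizable, Lindel\"of and \v Cech-complete (a perfect image of $X$) --- is Polish.

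For $(3)\Ra(4)$ and $(5)\Ra(4)$ I would again use that a \v Cech-complete group is feathered \cite{AT}, hence contains a compact subgroup $K$ with $X/K$ metrizable and $X\to X/K$ perfect and open \cite{AT}. Under $(5)$, countable cellularity passes to the open continuous image $X/K$, so the metrizable $X/K$ is second countable; under $(3)$, a feathered $\w$-narrow group is already Lindel\"of \cite{AT}. Either way $X$ is a perfect preimage of a Lindel\"of space, hence Lindel\"of, and it is \v Cech-complete by hypothesis --- this is $(4)$.

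The core is $(1)\Ra(4)$. First I would show that a Baire $K$-analytic group $X$ is feathered. Embed $X$ into $\beta X$ and use Lemma~\ref{l:KA}(3) to fix closed sets $(F_s)_{s\in\w^{<\w}}$ in $\beta X$ with $X=\bigcup_{\sigma\in\w^\w}\bigcap_nF_{\sigma{\restriction}_n}$; normalize so that $F_\emptyset=\beta X$ and $F_{s\frown k}\subseteq F_s$. For each $s\in\w^{<\w}$ the partial Souslin set $A_s\defeq\bigcup\{\bigcap_nF_{\sigma{\restriction}_n}:s\sqsubseteq\sigma\}\subseteq X$ is $K$-analytic (Lemma~\ref{l:KA}(3)), hence has the Baire property in $X$ (Lemma~\ref{l:KA}(2)), and $A_s=\bigcup_{k\in\w}A_{s\frown k}$. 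Starting from $A_\emptyset=X$ and running a fusion along $\w^{<\w}$ --- at each step using that a non-meager Baire-property subset of an open (hence Baire) subset of $X$ is comeager in a smaller nonempty open set --- I would produce $\sigma=(k_0,k_1,\dots)\in\w^\w$ and nonempty open sets $W_0\supseteq\overline{W_1}\supseteq W_1\supseteq\overline{W_2}\supseteq\cdots$ in $\beta X$ meeting $X$, with $A_{\sigma{\restriction}_n}$ comeager (hence dense) in $W_{n-1}\cap X$. Because $A_{\sigma{\restriction}_n}$ is dense in $W_{n-1}\cap X$ and lies in the closed set $F_{\sigma{\restriction}_n}$, this forces $\overline{W_{n-1}}\subseteq F_{\sigma{\restriction}_n}$ (closures in $\beta X$). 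Hence $C\defeq\bigcap_n\overline{W_n}=\bigcap_n(W_n\cap X)$ is a nonempty compact $G_\delta$ subset of $X$ sitting inside $\bigcap_nF_{\sigma{\restriction}_n}\subseteq X$, and compactness of the $\overline{W_n}$ makes $\{W_n\cap X\}_{n\in\w}$ an outer base of $C$; so $X$ is feathered. Then $X$ has a compact subgroup $K$ with $X/K$ metrizable \cite{AT}; as $X$ is Lindel\"of ($K$-analyticity) the space $X/K$ is second countable, it is analytic by Lemma~\ref{l:KA}(1), and it is Baire being an open continuous image of $X$. A Baire analytic separable metrizable group is Polish: it embeds as a dense subgroup with the Baire property (Lemma~\ref{l:KA}(2)) in its Raikov completion, which is a Polish group, and density plus Baireness makes it non-meager there, hence open by Pettis' Lemma~\ref{l:Pettis}, hence all of the completion. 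Thus $X/K$ is Polish and $X$, a perfect preimage of a Polish space, is Lindel\"of \v Cech-complete, i.e.\ $(4)$ holds.

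Finally, if $X$ has countable pseudocharacter then, by the equivalence $(1)\Leftrightarrow(4)$ just proved, $X$ is \v Cech-complete with $\{e\}$ a $G_\delta$; hence $\{e\}$ is a $G_\delta$ of the compact space $\beta X$, so $\beta X$ --- and with it $X$ --- is first countable at $e$, whence $X$ is metrizable by Birkhoff--Kakutani, and a separable (Lindel\"of) metrizable \v Cech-complete group is Polish \cite{Eng}; together with the trivial $(6)\Ra(1)$ this yields the equivalence with $(6)$. The one genuinely hard step is the feathering of $X$ in $(1)\Ra(4)$: manufacturing a compact set with a countable outer base out of nothing but the Souslin-scheme representation of a $K$-analytic space and the Baire property, the decisive move being that comeagerness of a partial Souslin set $A_{\sigma{\restriction}_n}$ inside an open box converts into an inclusion of that box's $\beta X$-closure into the closed set $F_{\sigma{\restriction}_n}$; everything downstream is routine use of the structure theory of feathered groups together with Pettis' lemma.
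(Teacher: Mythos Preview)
Your overall architecture is sound and the hard step $(1)\Rightarrow(4)$ is correct, but it takes a genuinely different route from the paper, and one of your ``soft'' claims is wrong as stated.

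\textbf{The minor error.} You assert that ``Lindel\"of spaces are countably cellular''. This is false already for compact Hausdorff spaces: the one-point compactification of an uncountable discrete set has an uncountable disjoint family of open singletons. Since you route both $(2)\Rightarrow(5)$ and $(4)\Rightarrow(5)$ through this claim, those arrows are unjustified. The fix is easy and available inside your own argument: once you have $(4)\Rightarrow(2)$, derive $(5)$ from $(2)$ directly (the paper does exactly this, using that compact groups are countably cellular and that a Polish quotient is separable). Alternatively, cite Uspenski\u{\i}'s theorem that Lindel\"of $\Sigma$-groups---in particular $K$-analytic groups---have countable cellularity.

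\textbf{The different approach to $(1)\Rightarrow(\text{\v Cech-complete})$.} The paper's argument is a one-shot application of Lemma~\ref{l:KA}(2) to $X\subseteq\beta X$: since $X$ has the Baire property in $\beta X$ and is itself Baire, one extracts a dense $G_\delta$-set of $\beta X$ lying inside $X$; thus $X$ contains a dense \v Cech-complete subspace, and the rest is structure theory of $\omega$-narrow groups. Your argument instead runs a Banach--Mazur/fusion along the Souslin scheme (Lemma~\ref{l:KA}(3)), using the Baire property of each partial Souslin set $A_s$ to push comeagerness down a branch and manufacture a compact $G_\delta$-set with countable outer base---i.e.\ featheredness---directly. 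Both are correct; the paper's route is shorter and uses Lemma~\ref{l:KA}(2) once rather than along a tree, while yours is more constructive and makes the role of the Souslin representation explicit. The endgame (metrizable quotient $X/K$ is analytic, Baire, hence Polish via the Ra\u{\i}kov completion and Pettis) is essentially the same in both, though you invoke Pettis' Lemma where the paper uses the ``two disjoint dense $G_\delta$'s'' contradiction.

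\textbf{Small points.} In the $(6)$ addendum, your chain ``$\{e\}$ is $G_\delta$ in $X$, $X$ is $G_\delta$ in $\beta X$, hence $\{e\}$ is $G_\delta$ in $\beta X$, hence $\beta X$ is first countable at $e$'' is fine; the paper instead goes via $(2)$ and Vilenkin's theorem. In $(3)\Rightarrow(4)$ you cite ``a feathered $\omega$-narrow group is Lindel\"of''; this is true but the paper spells it out, so be sure you can point to the precise statement in \cite{AT}.
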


\begin{proof} $(1)\Ra(2)$ Assume that $X$ is $K$-analytic and Baire. Let $\beta X$ be the Stone-\v Cech compactification of $X$. By Lemma~\ref{l:KA}(2), the $K$-analytic set $X$ has the Baire property in $\beta X$. Then there exists an open set $U$ in $\beta X$ such that the symmetric difference $U\triangle X$ is contained in a meager $F_\sigma$-set $M$ in $\beta X$. Since $X$ is dense in $\beta X$, the intersection $X\cap M$ is meager in $X$. Since the space $X$ is Baire, the complement $X\setminus M$ is dense in $X$ and hence dense in $\beta X$. Since $X\setminus M\subseteq U$, the open set $U$ is dense in $\beta X$ and hence $\beta X\setminus U$ is nowhere dense in $\beta X$. Replacing $M$ by $M\cup(\beta X\setminus U)$, we can assume that $(\beta X\setminus U)\subseteq M$ and hence $\beta X\setminus M\subseteq X$ is a dense $G_\delta$-subset of $\beta X$, contained in $X$. Therefore, the topological group $X$ contains a dense \v Cech-complete subspace $G\defeq \beta X\setminus M$. Replacing $G$ by a suitable shift of $G$, we can assume that $G$ contains the identity $e$ of the group $X$. Write $G$ as $G=\bigcap_{n\in\w}W_n$ for a decreasing sequence $(W_n)_{n\in\w}$ of open subsets of $\beta X$. By the complete regularity of the compact Hausdorff space $\beta X$, for every $n\in\w$ there exists a neighborhood $K_n$ of $e$ in $\beta X$ such that $K_n\subseteq W_n$ and $K_n$ is a compact $G_\delta$-set in $\beta X$.

The topological group $X$, being $K$-analytic, is Lindel\"of and hence $\w$-narrow. By \cite[3.4.19]{AT}, for every $n\in\w$ exists a closed normal $G_\delta$-subgroup $H_n$ in $X$ such that $H_n\subseteq K_n$. Let $\bar H_n$ be the closure of $H_n$ in $\beta X$. Then $$\bigcap_{n\in\w}\bar H_n\subseteq \bigcap_{n\in\w}K_n\subseteq \bigcap_{n\in\w}W_n=G\subseteq X$$ is a compact subset of $X$. Since 
$$\bigcap_{n\in\w}\bar H_n=X\cap\bigcap_{n\in\w}\bar H_n=\bigcap_{n\in\w}(X\cap \bar H_n)=\bigcap_{n\in\w}H_n,$$
the intersection $H=\bigcap_{n\in\w} H_n=\bigcap_{n\in\w}\bar H_n$ is a compact normal $G_\delta$-subgroup of $X$. Since $H$ is a closed $G_\delta$-set in $\bigcap_{n\in\w}K_n\subseteq G\subseteq X$ and $\bigcap_{n\in\w} K_n$ is a compact $G_\delta$-set in $\beta X$, the compact set $H$ is of type $G_\delta$ in $\beta X$. Then $H=\bigcap_{n\in\w}V_n$ for some sequence $(V_n)_{n\in\w}$ of open sets in $\beta X$ such that $\overline V_{n+1}\subseteq V_n$ for all $n\in\w$. By \cite[4.3.2]{AT}, every open neighborhood of $H$ in $\beta X$ contains some set $V_n$. Consequently, every open neighborhood of $H$ in $X$ contains some set $X\cap V_n$. This implies that the quotient group $Y=X/H$ is first-countable and hence metrizable by the Birkhoff-Kakutani Theorem \cite[3.3.12]{AT}. By Lemma~\ref{l:KA}(1), the quotient group $Y=X/H$ is analytic. Taking into account that the quotient homomorphism $q:X\to X/H$ is open and the space $X$ is Baire, we conclude that $Y$ is Baire, too. Let $\bar Y$ be the Ra\u\i kov completion of the topological group $Y$. Since $Y$ is metrizable and separable, the topological group $\bar Y$ is Polish. By Lemma~\ref{l:KA}(2), the analytic subgroup $Y$ of $\bar Y$ has the Baire property in $Y$. Being Baire, the $BP$-set $Y$ contains a dense $G_\delta$-subset $D$ of $\bar Y$. Assuming that $Y\ne \bar Y$, we can choose a point $y\in \bar Y\setminus Y$ and conclude that $D$ and $yD$ are two disjoint dense $G_\delta$-sets in the Polish space $\bar Y$, which contradicts the Baire Theorem. This contradiction shows that the topological group $Y=\bar Y$ is Polish.
\smallskip

$(2)\Ra(3)$ Assume that $X$ contains a compact normal subgroup $H$ such that the quotient group $X/H$ is Polish. By \cite[4.3.18]{AT}, the topological group $X$ is \v Cech-complete. By Theorem~\cite[1.5.7]{AT}, the compactness of the subgroup $H$ of $X$ implies that the quotient map $q:X\to X/H$ is closed. By Theorem 3.8.8 of \cite{Eng}, the Lindel\"of property of the Polish space $X/H$ implies that the space $X$ is Lindel\"of. By \cite[3.4.6]{AT}, the Lindel\"of topological group $X$ is $\w$-narrow.
\smallskip

$(3)\Ra(4)$ Assume that $X$ is an $\w$-narrow \v Cech-complete group. By Corollary 4.3.5 in \cite{AT}, the neutral element $e$ of $X$ has a countable family $(W_n)_{n\in\w}$ of open neighborhoods such that the set $K=\bigcap_{n\in\w}W_n$ is compact, $\overline{W}_{n+1}\subseteq W_n$ for all $n\in\w$, and every neighborhood of $K$ in $X$ contains some set $W_n$. By \cite[3.4.19]{AT}, $X$ contains a closed  normal $G_\delta$-subgroup $H$ such that $H\subseteq K$ and hence the subgroup $H$ is compact. Since $H$ is a $G_\delta$-set in $X$, there exists a decreasing sequence $(U_n)_{n\in\w}$ of open neighborhoods of $H$ in $X$ such that $H=\bigcap_{n\in\w}U_n$ and $\overline U_{n+1}\subseteq U_n\subseteq W_n$ for all $n\in\w$. We claim that every neighborhood $U$ of $H$ in $X$ contains some set $U_n$. By the compactness of $K$ and the equality $H=\bigcap_{n\in\w}(K\cap\overline U_n)$, there exists $k\in\w$ such that $K\cap\overline U_k\subseteq U$. By the choice of the sequence $(W_n)_{n\in\w}$, the open neighborhood $U\cup(X\setminus\overline U_k)$ of $K$ contains some set $W_n$ with $n\ge k$. Then $U_n\subseteq W_n\subseteq U\cup (X\setminus\overline U_k)$ and hence $U_n\subseteq U$. The sequence $(U_n)_{n\in\w}$ and the openness of the quotient homomorphism $q:X\to X/H$ witness that the topological group $X/H$ is first-countable. It is also $\w$-narrow, being a  homomorphic image of the $\w$-narrow group $X$. By \cite[3.4.5]{AT}, the first-countable $\w$-narrow topological group $X$ is second-countable and hence Lindel\"of. By Theorem~\cite[1.5.7]{AT}, the compactness of the subgroup $H$ of $X$ implies that the quotient map $q:X\to X/H$ is closed. By Theorem 3.8.8 of \cite{Eng}, the Lindel\"of property of the Polish space $X/H$ implies that the space $X$ is Lindel\"of.
\smallskip

$(4)\Ra(1)$ If the topological group $X$ is Lindel\"of and \v Cech-complete, then it is $K$-analytic and Baire, see \cite[3.9.4]{Eng}.
\smallskip

$(5)\Ra(3)$ If the topological group $X$ is cellular (and \v Cech-complete), then it is $\w$-narrow (and \v Cech-complete) by \cite[3.4.7]{AT}.
\smallskip

$(2)\Ra(5)$ Assume that $X$ contains a compact subgroup $H$ such that the quotient space $X/H$ is Polish. To prove that $X$ has countable cellularity, fix any family $\mathcal U$ consisting of pairwise disjoint nonempty open sets in $X$. Let $D$ be a countable dense set in the Polish space $X/H$. Since the quotient map $q:X\to X/H$ is open, for every $U\in \U$ there exists $y\in D$ such that $y\in q[U]$. Then $\U=\bigcup_{y\in D}\U_y$ where $\U_y=\{U\in\U:U\cap q^{-1}(y)\ne\emptyset\}$ for $y\in D$. For every $y\in D$ the compact space $q^{-1}(y)$ is homeomorphic to the compact topological group $H$, which has countable cellularity by \cite[4.1.8]{AT}.  Then the family $\U_y$ is countable and so is the union $\U=\bigcup_{y\in D}\U_y$.
\smallskip

Now assuming that the space $X$ has countable pseudocharacter, we shall prove that $(2)\Leftrightarrow(6)$. In fact, the implication $(6)\Ra(2)$ is trivial. To prove that $(2)\Ra(6)$, assume that $X$ contains a compact normal subgroup $H$ such that the quotient group $X/H$ is Polish. Since $X$ has countable pseudocharacter, the compact subgroup $H$ has countable (pseudo)character and hence is metrizable, see \cite[3.3.17]{AT}.  By Vilenkin Theorem \cite[3.3.20]{AT}, the topological group $X$ is metrizable. Since $(2)\Leftrightarrow(4)$, the topological group $X$ is Lindel\"of and \v Cech-complete. Being metrizable, the Lindel\"of \v Cech-complete space $X$ is Polish by \cite[4.3.26]{Eng}.
\end{proof}

\begin{remark} Theorem~\ref{t:Cech} generalizes results of Banakh, Ravsky \cite{BR} (and Christensen \cite[5.4]{Chris}) who proved that any Baire analytic group is Polish (if the topology of $X$ is generated by an invariant metric).
\end{remark}

\section{Proof of Theorem~\ref{t:main-BP}}\label{s:7}

Theorem~\ref{t:main-BP} can be easily derived from Theorems~\ref{t:KA3}, \ref{t:Cech} and 
the following lemma.

\begin{lemma}\label{l:M-fBb} Let $\M$ be the $\sigma$-ideal of meager sets in a countably cellular Tychonoff space $X$. Then
\begin{enumerate}
\item $\M$ has a functionally Borel base;
\item $\M$ is ccc;
\item $\Bo^\pm\!\M=\Ba^\pm\!\M$ is the $\sigma$-algebra of sets with the Baire Property in $X$.
\end{enumerate}
\end{lemma}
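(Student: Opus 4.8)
The three items are essentially independent, so I would treat them in turn. For (1), the key point is that in a countably cellular (hence ccc) Tychonoff space a nowhere dense closed set need not be functionally closed, but I can still enclose every meager set in a functionally Borel meager set. The natural move is: given a closed nowhere dense set $N$, its complement $X\setminus N$ is open and dense; I want to shrink it to a \emph{functionally} open dense set $W\subseteq X\setminus N$ so that $X\setminus W$ is a functionally closed (hence functionally Borel) nowhere dense set containing $N$. To produce such a $W$, take a maximal disjoint family $\{V_i\}_{i\in I}$ of nonempty functionally open subsets of $X\setminus N$; countable cellularity makes $I$ countable, and maximality together with regularity of $X$ (every point of the open dense set $X\setminus N$ has a functionally open neighborhood inside it, since $X$ is Tychonoff) forces $\bigcup_i V_i$ to be dense. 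Then $W=\bigcup_{i} V_i$ is functionally open and dense, so $X\setminus W$ is functionally closed and nowhere dense. A general meager set is a countable union $\bigcup_n N_n$ of closed nowhere dense sets; enclosing each $N_n$ in a functionally closed nowhere dense $C_n$ gives $\bigcup_n C_n$, a functionally $F_\sigma$ (hence functionally Borel) meager set containing the original. This proves (1).

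**The ccc property and the $\sigma$-algebra identification.**
For (2): a disjoint family of $\M$-positive Borel sets would, after passing to nonempty interiors' worth of witnesses, contradict countable cellularity — more carefully, if $\{B_j\}_{j\in J}$ is a disjoint family of non-meager Borel sets, each $B_j$ has the Baire property, so $B_j=O_j\,\triangle\,M_j$ with $O_j$ open and $M_j$ meager, and non-meagerness forces $O_j\neq\emptyset$; shrinking $O_j$ to avoid $M_j$ is not possible outright, but the sets $O_j\setminus\overline{\bigcup\{O_k : k\neq j\}}$... actually the cleaner route is: the family $\{O_j\}_{j\in J}$ need not be disjoint, so instead observe that each non-meager BP set contains a non-meager open-minus-meager set, and invoke the standard fact that ccc for open sets implies ccc for the meager ideal via a maximal-disjoint-refinement argument inside each $O_j$. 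I would phrase (2) as: countable cellularity is exactly $\Bo$-ccc after noting every non-meager Borel set has non-empty ``open part''. For (3): $\Ba\subseteq\Bo$ always, and by (1) every meager set sits inside a functionally Borel meager set, so $\Bo^\pm\M\subseteq\Ba^\pm\M$ (rewrite a Borel-mod-meager set using that its Borel part, modulo a meager set, can be replaced — here one uses that Borel sets have the Baire property, so $B=O\triangle M$, and $O$ is open hence functionally... no: open need not be functionally open). The honest argument: a set with the Baire property is $O\triangle M$ with $O$ open; by (1), $M\subseteq C$ with $C$ functionally $F_\sigma$ meager, and $O=W\cup(O\setminus W)$ where $W$ is a functionally open dense-in-$O$ subset with $O\setminus W$ nowhere dense (same maximal-family trick relativized to $O$), so $O\triangle M \in \Ba^\pm\M$. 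Combined with the trivial inclusion $\Ba^\pm\M\subseteq\Bo^\pm\M\subseteq\{$BP sets$\}$ and Pettis-style closure of the BP $\sigma$-algebra, all three coincide.

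**Main obstacle.**
The delicate point throughout is replacing \emph{open} sets by \emph{functionally open} sets up to a meager error, since in a general (even countably cellular) Tychonoff space open sets are far from functionally open. The maximal-disjoint-functionally-open-family argument, powered by countable cellularity to keep the family countable and by complete regularity to keep it dense, is the workhorse that makes this possible, and I expect the write-up to hinge on stating that lemma cleanly once and reusing it for both (1) and (3). Everything else — the ccc verification and the bookkeeping identifying $\Bo^\pm\M$ with the Baire-property $\sigma$-algebra — is routine once that reduction is in hand.
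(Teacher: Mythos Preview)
Your treatment of (1) and (3) is correct and coincides with the paper's: the maximal disjoint family of functionally open sets, made countable by cellularity and dense by complete regularity, is exactly the workhorse the paper uses, both to enclose nowhere dense sets in functionally closed nowhere dense sets and to approximate an arbitrary open set by a functionally open one modulo a nowhere dense remainder.

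The gap is in (2). You correctly set up $B_j=O_j\triangle M_j$ with $O_j$ open nonempty, and you correctly flag the obstacle that the $O_j$ need not be disjoint; but then you do not resolve it---you abandon the attempt $O_j\setminus\overline{\bigcup_{k\ne j}O_k}$ and fall back on ``the standard fact that ccc for open sets implies ccc for the meager ideal,'' which is precisely the content of (2) and so cannot be invoked. The paper's resolution is to replace each $O_j$ by its largest open Baire subspace $V_j$ (the union of all open Baire subsets of $O_j$); one checks that $O_j\setminus V_j$ is meager, so $V_j$ is still nonempty, and that the $V_j$ are pairwise disjoint because $V_j\cap V_k$, being a Baire open set contained in the meager set $O_j\cap O_k\subseteq M_j\cup M_k$, must be empty. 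Countable cellularity then finishes. An equivalent cleanup, if you prefer to avoid the ``largest Baire part'' language: the Banach Category Theorem makes $M:=\bigcup_{j\ne k}(O_j\cap O_k)$ meager (it is a union of open meager sets), whence each $O_j\setminus M$ is a non-meager relatively closed subset of $O_j$ and therefore has nonempty interior; the interiors $\operatorname{int}(O_j\setminus M)$ are then pairwise disjoint nonempty open sets. Either way, the missing ingredient is an honest passage from the overlapping $O_j$ to a disjoint open family, and that passage rests on Banach Category.
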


\begin{proof} 1. To show that the ideal $\M$ has a functional base, take any meager set $M$ in $X$ and find a sequence $(M_n)_{n\in\w}$ of closed nowhere dense sets in $X$ such that $X\subseteq \bigcup_{n\in\w}M_n$. Let $\mathcal F$ be the family of all nonempty functionally open sets in $X$. For every $n\in\w$, let $\F_n=\{U\in\F:U\cap M_n=\emptyset\}$. Using Kuratowski--Zorn Lemma, choose a maximal subfamily $\U_n\subseteq\F_n$ that consists of pairwise disjoint sets. Since $X$ has countable cellularity, the family $\U_n$ is countable. Then its union $U_n=\bigcup\U_n$ is functionally open set in $X$. The maximality of $\U_n$ ensures that the set $U_n$ is dense in $X$ and hence $X\setminus U_n$ is a functionally closed nowhere dense subset of $X$. Then the union $\bigcup_{n\in\w}(X\setminus U_n)$ is a functionally Borel meager subset of $X$ that contains the meager set $M$ and witnesses that the ideal $\I$ has a functionally Borel base.
\smallskip

2. To show that the ideal $\M$ is ccc, take any disjoint family $\mathcal D$ of nonmeager Borel sets in $X$. Since Borel sets have the Baire Property, for every $D\in \mathcal D$ there exists an open set $U_D$ in $X$ such that the symmetric difference $U_D\Delta D$ is meager in $X$. Since the set $D$ is nonmeager, the open set $U_D$ is nonmeager, too. Let $V_D$ be the union of all open Baire subpaces in $U_D$. Since $V_D$ is the largest Baire open subspace of $U_D$, the complement $U_D\setminus V_D$ is meager and hence the set $V_D$ is not meager. It follows that the set $M_D=V_D\setminus D$ is meager. 

We claim that the family $(V_D)_{D\in\mathcal D}$ consists of pairwise disjoint open sets. Indeed, assuming that $V_D\cap V_{D'}\ne\emptyset$ for some distinct sets $D,D'\in\mathcal D$, we conclude that the $V_D\cap V_{D'}\cap(M_D\cup M_{D'})$ is meager in the nonempty Baire space $V_D\cap V_{D'}$ and hence the set $V_D\cap V_{D'}\setminus (M_D\cup M_{D'})$ is not empty and thus contains some point $x$. The point $x$ belongs to $V_D\setminus (M_D\cup M_{D'})\subseteq V_D\setminus M_D=V_D\cap D\subseteq D$ also to $D'$, which is not possible as the sets $D,D'$ are disjoint. This contradiction shows that the family $\{V_D\}_{D\in\mathcal D}$ consists of pairwise disjoint open sets. Since the topological space $X$ has countable cellularity, this family is countable and so is the family $\mathcal D$.
\smallskip

3. By definition, the $\sigma$-algebra $\BoI$ coincides with the $\sigma$-algebra of sets with the Baire Property in $X$. To see that $\BoI=\BaI$, it suffices to check that every open set $U\subseteq X$ belongs to the $\sigma$-algebra $\BaI$. Using the Kuratowski--Zorn Lemma, choose a maximal disjoint family $\U$ of functionally open sets in $X$ such that $\bigcup\U\subseteq U$. The countable cellularity of $X$ ensures that the family $\U$ is countable and the maximality of $\U$ guarantees that the union $\bigcup\U$ is dense in $U$. Then $\bigcup\U$ is a functionally open set in $X$ such that $U\setminus \bigcup\U$ is nowhere dense in $X$, witnessing that $U\in\BaI$.
\end{proof}

\begin{remark} The ideal of meager sets in the compact Hausdorff space $\beta\w\setminus\w$ fails to have a functionally arbitrary base (since each nonempty $G_\delta$-set in $\beta\w\setminus\w$ has nonempty interior). This example shows that the countably cellularity of $X$ cannot be removed from the formulation of  Lemma~\ref{l:M-fBb}. 
\end{remark}

\section{Proof of Theorem~\ref{t:main-H}}\label{s:8}

In this section we apply Theorem~\ref{t:KA3} to prove Theorem~\ref{t:main-H} on the automatic continuity of Haar-measurable homomorphisms on locally compact groups. 

Let us recall that a {\em Haar measure} of a topological group $X$ is any nontrivial left-invariant $\sigma$-additive Borel measure $\lambda:\Bo(X)\to [0,\infty]$  such that 
\begin{itemize}
\item $\lambda(K)<\infty$ for every compact set $K\subseteq X$;
\item for any Borel set $B\subseteq X$ and any real number $a<\lambda(B)$ there exists a compact set $K\subseteq B$ such that $\lambda(K)\ge a$.
\end{itemize}
The last condition is called the {\em inner regularity} of the Haar measure.
It is well-known \cite{Alfsen}, \cite[Ch.44]{Fremlin} that a topological group has a Haar measure if and only if it is locally compact. Moreover, any two Haar measures on a locally compact group differ by a positive multiplier. In this sense a Haar measure on a locally compact group is unique.
\smallskip

A subset $A$ of a locally compact group $X$ is called  {\em Haar-null} if $A\subseteq B$ for some Borel set $B$ of Haar measure zero. 
\smallskip

Now our strategy is to prove that for a $\sigma$-compact locally compact topological group $X$ the ideal $\N$ of Haar-null sets satisfies the requirements of Theorem~\ref{t:KA3}.

%
%\item {\em Haar-measurable} if there exists a Borel set $B\subseteq X$ such that the symmetric difference $A\triangle B$ is Haar-null in $X$. 
%\end{itemize}
%A function $f:X\to Y$ from a locally compact group $X$ to a topological space $Y$ is called {\em Haar-measurable} if for any open set $U\subseteq Y$ the preimage $f^{-1}[U]$ is a Haar-measurable set in $X$. It is clear that any continuous function on a locally compact group is Haar-measurable. The converse is true for Haar-measurable homomorphisms on locally compact groups, see Theorem~\ref{t:HM} below. To prove this theorem we need the following lemmas.

\begin{lemma}\label{l:Hn-fB}  Each compact set $K$ in a locally compact group $X$  contains a functionally closed subset $F$ of $X$ such that $K\setminus F$ is Haar-null.
\end{lemma}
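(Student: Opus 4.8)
The plan is to reduce to a $\sigma$-compact group and then push the whole situation to a second-countable metrizable quotient, where closed sets are automatically functionally closed. Every locally compact group contains an open, hence clopen, $\sigma$-compact subgroup $H$, and a subset of $H$ which is functionally closed in $H$ is functionally closed in $X$ (extend the defining function by the constant $1$ on the clopen set $X\setminus H$). Since $K$ is compact it meets only finitely many left cosets $x_1H,\dots,x_mH$ of $H$; the sets $x_i^{-1}K\cap H$ are compact subsets of $H$, and left translations preserve functional closedness and, by left-invariance of $\lambda$, Haar-nullity. So if the lemma holds for compact subsets of the $\sigma$-compact group $H$, applying it to each $x_i^{-1}K\cap H$, translating back, and taking the (finite, hence functionally closed) union of the pieces gives the lemma for $K$. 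From now on $X$ is $\sigma$-compact, so $\lambda$ is $\sigma$-finite, inner and outer regular, $X$ is Lindel\"of and therefore $\mathbb R$-factorizable \cite[8.1.6]{AT}, and we may assume $K\neq\emptyset$.

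Next I would produce a \emph{compact} functionally closed set $Z$ with $K\subseteq Z$ and $\lambda(Z\setminus K)=0$. Fix an open $U_0\supseteq K$ with compact closure. For each $n\in\IN$, by outer regularity choose an open $U_n$ with $K\subseteq U_n\subseteq U_0$ and $\lambda(U_n\setminus K)<1/n$. The set $X\setminus U_n$ is closed in the $\sigma$-compact space $X$, hence Lindel\"of; since $X$ is Tychonoff, every point of $X\setminus U_n\subseteq X\setminus K$ lies in a functionally open set disjoint from the closed set $K$, so by Lindel\"ofness $X\setminus U_n$ is covered by countably many such sets, whose union $W_n$ is functionally open and misses $K$. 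Then $Z_n:=X\setminus W_n$ is functionally closed with $K\subseteq Z_n\subseteq U_n$, so $\lambda(Z_n\setminus K)<1/n$; and $Z:=\bigcap_n Z_n$ is functionally closed, contained in the compact set $\overline{U_0}$ (hence compact), and satisfies $\lambda(Z\setminus K)=0$.

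Now write $Z=g^{-1}(0)$ with $g:X\to\IR$ continuous, and use $\mathbb R$-factorizability to factor $g=\tilde g\circ q$ through an open quotient homomorphism $q:X\to P$ onto a second-countable (hence metrizable) topological group $P$, with $\tilde g:P\to\IR$ continuous; put $M:=\Ker q$ and $C:=\tilde g^{-1}(0)$, so $Z=q^{-1}[C]$ is $M$-invariant. Since $Z$ is compact and nonempty, any coset $x_0M\subseteq Z$ (with $x_0\in K$) is closed in $X$ and contained in a compact set, hence compact; therefore $M$ is compact. Set $K^\flat:=P\setminus q[X\setminus K]$ and $F:=q^{-1}[K^\flat]=X\setminus(X\setminus K)M$. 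As $q$ is open, $q[X\setminus K]$ is open and $K^\flat$ is closed in $P$; since $P$ is metrizable, hence perfectly normal, $K^\flat=\psi^{-1}(0)$ for some continuous $\psi:P\to\IR$, so $F=(\psi\circ q)^{-1}(0)$ is functionally closed in $X$, and by construction $F\subseteq K$.

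It remains to see that $K\setminus F$ is Haar-null, and this is exactly where compactness of $M$ is used. Since $K^\flat\subseteq C$ (if $q^{-1}(s)\subseteq K\subseteq Z=q^{-1}[C]$ then $s\in C$), we have $K\setminus F=K\cap(X\setminus K)M\subseteq Z\setminus F=q^{-1}[C\setminus K^\flat]$. Because $M$ is compact, $q$ is a perfect map and $\lambda$ disintegrates over a Haar measure $\nu$ on $P$ into translates of the normalized Haar measure on the compact fibres $q^{-1}(s)$ (see \cite[Ch.44]{Fremlin}); since $\lambda(Z\setminus K)=0$, for $\nu$-almost every $s\in C$ the set $q^{-1}(s)\setminus K$ is null in $q^{-1}(s)$, hence---being open in the compact group $q^{-1}(s)$---empty, i.e.\ $s\in K^\flat$. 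Thus $\nu(C\setminus K^\flat)=0$, whence $\lambda(K\setminus F)\le\lambda\big(q^{-1}[C\setminus K^\flat]\big)=0$. The main obstacle is conceptual rather than computational: a compact set need not be functionally closed in a non-metrizable group (it need not be a $G_\delta$), so one must genuinely locate a functionally closed subset of $K$ of full $\lambda$-measure; the mechanism that makes this possible is that $Z$ may be taken compact, which forces the kernel $M$ of the factorizing quotient to be compact, so that the fibres of $q$ are compact groups in which nonempty open sets are non-null---precisely what converts $\lambda(Z\setminus K)=0$ into $\lambda(K\setminus F)=0$.
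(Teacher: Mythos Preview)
Your proof is correct and takes a genuinely different route from the paper's. The paper defines $F$ directly as the support of $\lambda$ restricted to $K$ (the set of points of $K$ every $K$-neighbourhood of which has positive measure), which immediately gives $\lambda(K\setminus F)=0$ by inner regularity; the work is then a short minimization argument over compact $G_\delta$-subgroups $H$, showing that $\lambda(HF)$ attains its infimum at some $H$ with $HF=F$, so that $F$ is the $q$-preimage of a compact set in the metrizable quotient $X/H$. You instead first manufacture a compact functionally closed \emph{superset} $Z\supseteq K$ with $\lambda(Z\setminus K)=0$, read off a compact kernel $M$ from the $\mathbb R$-factorization of the defining function of $Z$, and then invoke Weil's quotient integral formula to convert $\lambda(Z\setminus K)=0$ into $\lambda(K\setminus F)=0$ via the ``open null sets in a compact group are empty'' trick on fibres. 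The paper's argument is more self-contained---no reduction to the $\sigma$-compact case, no outer regularity step, no disintegration---while yours is more mechanical and makes the role of the metrizable quotient very explicit. Both hinge on the same structural point: the desired set can be taken saturated for a compact $G_\delta$-subgroup, so functional closedness is inherited from the metrizable quotient. One small remark: you assert that the factorizing homomorphism $q$ is an open quotient map; this is how Tkachenko's theorem is proved, but if you want to avoid relying on that, simply note that $M=q^{-1}(e_P)$ is a compact $G_\delta$-subgroup (since $P$ is metrizable) and replace $P$ by $X/M$ with its quotient topology, which is then first-countable and hence metrizable, and whose quotient map is open.
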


\begin{proof} %Replacing $X$ by an open $\sigma$-compact subgroup that contains the compact set $K$, we can assume that the locally compact group $X$ is $\sigma$-compact. 
Let $\lambda$ be a Haar measure on $X$ and $F$ be the  set of points $x\in K$ such that $\lambda(O_x)>0$ for any neighborhood $O_x$ of $x$ in $K$. It is clear that $F$ is a closed subset of $K$. The inner regularity of the Haar measure $\lambda$ ensures that $\lambda(K\setminus F)=0$. We claim that the set $F$ is functionally closed in $X$.

Let $\mathcal H$ be the family of compact $G_\delta$-subgroups of $X$. Since $\mathcal H$ is closed under countable intersections, there exists a subgroup $H\in\mathcal H$ such that $\lambda(HF)=\inf\{\lambda(ZF):Z\in\mathcal H\}$. We claim that $HF=F$. Assuming that $HF\ne F$, we can find points $x\in H$ and $y\in F$ such that $xy\notin F$. Find a neighborhood $U\subseteq X$ of the identity of $X$ such that $Uxy\cap F=\emptyset$. By \cite[3.1.26]{AT}, the neighborhood $U$ contains a compact $G_\delta$-subgroup $Z$ of $X$. Replacing $Z$ by $Z\cap H$, we can assume that $Z\subseteq H$. It follows from $Zxy\cap F\subseteq Uxy\cap F=\emptyset$ that $xy\notin ZF$. Since the compact set $ZF$ is closed in $X$, there exists an open neighborhood $O_y$ of the point $y$ in $F$ such that $ZF\cap xO_y=\emptyset$.
The left-invariance of the Haar measure $\lambda$ and the definition of the set $F$ ensure that $\lambda(xO_y)=\lambda(O_y)>0$. Then $$\lambda(HF)\ge \lambda(xO_y)+\lambda(ZF)>\lambda(ZF),$$which contradicts the choice of the subgroup $H$. This contradiction shows that $HF=F$.

Let $X/H=\{Hx:x\in X\}$ be the quotient space and $q:X\to X/H$ be the quotient map. By the proof of Theorem 3.1.26 in \cite{AT}, the space $X/H$ is first-countable and by Vilenkin Theorem \cite[3.2.20]{AT}, the first-countable space $X/H$ is metrizable. Then the compact set $q[F]$ is functionally closed in $X/H$ and its preimage $F=HF=q^{-1}[q[F]]$ is functionally closed in $X$.
\end{proof}

\begin{lemma}\label{l:N-fBb} Let $\N$ be the $\sigma$-ideal of Haar-null sets in a $\sigma$-compact locally compact topological group $X$. Then
\begin{enumerate}
\item $\N$ has a functionally Borel base;
\item $\N$ is ccc;
\item $\Bo^\pm\!\N=\Ba^\pm\!\N$ is the $\sigma$-algebra of Haar-measurable sets in $X$.
\end{enumerate}
\end{lemma}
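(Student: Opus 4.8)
The plan is to fix a Haar measure $\lambda$ on $X$ together with a cover $X=\bigcup_{n\in\w}K_n$ by compact sets (possible since $X$ is $\sigma$-compact); then $\lambda$ is $\sigma$-finite. The technical heart of everything will be the following approximation statement: \emph{for every Borel set $B\subseteq X$ there is a countable union $F$ of functionally closed subsets of $X$ with $F\subseteq B$ and $\lambda(B\setminus F)=0$}. To prove it, for all $n,k\in\w$ use the inner regularity of $\lambda$ to choose a compact set $C_{n,k}\subseteq B\cap K_n$ (a set of finite measure) with $\lambda\bigl((B\cap K_n)\setminus C_{n,k}\bigr)\le 2^{-k}$, and then apply Lemma~\ref{l:Hn-fB} to find a functionally closed set $F_{n,k}\subseteq C_{n,k}$ with $C_{n,k}\setminus F_{n,k}$ Haar-null. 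Put $F\defeq\bigcup_{n,k\in\w}F_{n,k}$; then $F$ is a countable union of functionally closed sets, $F\subseteq B$, and for each fixed $n$ the inequality $\lambda\bigl((B\cap K_n)\setminus F\bigr)\le\lambda\bigl((B\cap K_n)\setminus C_{n,k}\bigr)+\lambda\bigl(C_{n,k}\setminus F_{n,k}\bigr)\le 2^{-k}$ holds for all $k$, so $\lambda\bigl((B\cap K_n)\setminus F\bigr)=0$ for every $n$ and hence $\lambda(B\setminus F)=0$ by countable subadditivity.

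Granting the approximation statement, I would derive (1) by applying it to $X\setminus B$, where $B$ is a Borel set of Haar measure zero containing a given Haar-null set $I$: the resulting $F\subseteq X\setminus B$ satisfies $\lambda\bigl((X\setminus B)\setminus F\bigr)=0$, so $X\setminus F$ is a functionally Borel set (a countable intersection of functionally open sets) which contains $B\supseteq I$ and has $\lambda(X\setminus F)\le\lambda(B)+\lambda\bigl((X\setminus B)\setminus F\bigr)=0$; thus $\N$ has a functionally Borel base. Statement (2) is a routine consequence of $\sigma$-finiteness: if $\mathcal D$ is a disjoint family of non-Haar-null Borel sets, then every $D\in\mathcal D$ has $\lambda(D\cap K_n)>2^{-k}$ for some $n,k\in\w$, so $\mathcal D=\bigcup_{n,k\in\w}\mathcal D_{n,k}$ with $\mathcal D_{n,k}\defeq\{D\in\mathcal D:\lambda(D\cap K_n)>2^{-k}\}$, and each $\mathcal D_{n,k}$ is finite because $\lambda(K_n)<\infty$ and the members of $\mathcal D$ are disjoint; hence $\mathcal D$ is countable.

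For (3), applying the approximation statement to an open set $U\subseteq X$ gives a functionally Borel set $F\subseteq U$ with $U\setminus F\in\N$, whence $U=F\cup(U\setminus F)\in\Ba^\pm\!\N$; so $\Bo\subseteq\Ba^\pm\!\N$ and therefore $\Bo^\pm\!\N\subseteq\Ba^\pm\!\N$, while $\Ba^\pm\!\N\subseteq\Bo^\pm\!\N$ holds trivially from $\Ba\subseteq\Bo$. Finally, by the displayed formula for $\A^\pm\!\I$ (applicable since $\bigcup\N\subseteq X=\bigcup\Bo$), the $\sigma$-algebra $\Bo^\pm\!\N=\{(B\setminus I)\cup J:B\in\Bo,\ I,J\in\N\}$ is precisely the completion of the Borel $\sigma$-algebra with respect to $\lambda$, which coincides with the $\sigma$-algebra of Haar-measurable sets because $\lambda$ is $\sigma$-finite. (Alternatively, one could note that $X$, being $\sigma$-compact and hence Lindel\"of, satisfies $\Bo=\Ba$, which trivializes (1) and the equality $\Bo^\pm\!\N=\Ba^\pm\!\N$; I prefer the route through Lemma~\ref{l:Hn-fB}, as it is self-contained.) I expect the main obstacle to be the approximation statement, specifically the step where the compact inner approximations furnished by inner regularity are upgraded to \emph{functionally} closed sets via Lemma~\ref{l:Hn-fB}: this is exactly what produces a functionally Borel rather than a merely $F_\sigma$ (resp. $G_\delta$) approximation, and hence what makes the ideal $\N$ admissible for Theorem~\ref{t:KA3}; the remaining arguments are routine measure-theoretic bookkeeping.
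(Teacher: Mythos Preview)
Your main argument is correct and follows essentially the same route as the paper: both proofs combine inner regularity of the Haar measure with Lemma~\ref{l:Hn-fB} to upgrade compact inner approximations to functionally closed ones, and both handle (2) by the standard $\sigma$-finiteness bookkeeping. Your packaging of the key step as a single ``approximation statement'' is slightly tidier than the paper's, which runs separate (but analogous) arguments for (1) and (3); in particular, for (3) the paper shows directly that every Borel set lies in $\Ba^\pm\!\N$ via a Zorn-type maximal disjoint family of $\N$-positive compact subsets, whereas you more economically observe that it suffices to place the open sets there.

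One correction to your parenthetical aside: it is \emph{not} true that a $\sigma$-compact locally compact group automatically satisfies $\Bo=\Ba$. The compact group $\{0,1\}^{\omega_1}$ is a counterexample: every continuous map from it to $\IR^\w$ factors through countably many coordinates, so every functionally Borel set depends on only countably many coordinates, whereas the singleton $\{e\}$ is a closed (hence Borel) set that does not. This does not affect your proof, since you explicitly (and rightly) prefer the route through Lemma~\ref{l:Hn-fB}; just drop the alternative remark.
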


\begin{proof} Let $\lambda$ be a Haar measure on the locally compact group $X$. Since $X$ is $\sigma$-compact, $X=\bigcup_{n\in\w}K_n$ for some  increasing sequnce  $(K_n)_{n\in\w}$ of compact sets in $X$.
\smallskip

1. To show that the ideal $\N$ has a functionally Borel base, it suffices to check that every Haar-null Borel set $A\subseteq X$ is contained in a Haar-null functionally Borel subset of $X$.  For every  $n\in\w$, the Borel set $A\cap K_n$ has Haar measure zero and hence $\lambda(K_n\setminus A)=\lambda(K_n)<\infty$. By the inner regularity of the Haar measure, for every $m\in\w$ there exists a compact set $C_{n,m}\subseteq K_n\setminus A$ such that $\lambda(C_{n,m})<\lambda(K_n)-2^{-m}$. By Lemma~\ref{l:Hn-fB}, the compact set $C_{n,m}$ contains a functionally closed subset $F_{n,m}$ of $X$ such that $\lambda(F_{n,m})=\lambda(C_{n,m})$. It follows that $F=\bigcup_{n,m\in\w}F_{n,m}$ is a functionally Borel set in $X$ whose complement $X\setminus F$ contains $A$ and is Haar-null in $X$.
\smallskip

2. To prove that the ideal $\N$ is ccc, choose any disjoint family $\mathcal D$ of $\N$-positive Borel sets in $X$.  For every $n,m\in\w$ consider the family
$$\mathcal D_{n,m}=\{D\in\mathcal D:\lambda(D\cap K_n)\ge 2^{-m}\}.$$
The additivity of the measure $\lambda$ implies that the family $\mathcal D$ is finite and has cardinality $\le2^m\cdot \lambda(K_n)$. Now the $\sigma$-additivity of the measure $\lambda$ ensures that $\mathcal D=\bigcup_{n,n\in\w}\mathcal D_{n,m}$
and hence the family $\mathcal D$ is countable, witnessing that the ideal $\N$ is ccc.
\smallskip

3. By definition, $\Bo^\pm\!\N$ is the $\sigma$-ideal of Haar-measurable sets in $X$. To show that $\Bo^\pm\!\N=\Ba^\pm\!\N$, it suffices to check that every Borel subset $B$ of $X$ is $\Ba^\pm\!\N$-measurable. Let $\K$ be a maximal disjoint family of $\N$-positive compact subsets of $B$ and $\K'$ be a maximal disjoint family of $\N$-positive compact subsets of $X\setminus B$. The ccc property of the ideal $\N$ ensures that the families $\K$ and $\K'$ are countable. The inner regularity of the Haar measure ensures that the Borel sets $B\setminus\bigcup\K$ and $(X\setminus \bigcup\K')\setminus B$ are Haar-null and so is their union $X\setminus\bigcup(\K\cup\K')$. By Lemma~\ref{l:Hn-fB}, for every $K\in\K\cup\K'$ there exists a functionally closed set $F_K\subseteq K$ in $X$ such that $K\setminus F_K$ is Haar-null. Then $F=X\setminus \bigcup_{K\in\K\cup \K'}F_K$ is a functionally Borel subset of $X$ such that the symmetric difference $F\Delta B\subseteq \big(X\setminus\bigcup(\K\cup\K')\big)\cup\bigcup_{K\in\K\cup\K'}(K\setminus F_K)$ is Haar-null, witnessing that the Borel set $B$ is $\Ba^\pm\!\N$-measurable.
\end{proof}

The following lemma implies Theorem~\ref{t:main-H}.

\begin{lemma}\label{l:HM} A homomorphism $h:X\to Y$ from a locally compact topological group $X$ to a topological group $Y$ is continuous if and only if it is Haar-measurable.
\end{lemma}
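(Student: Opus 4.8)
The plan is to reduce the general locally compact case to the $\sigma$-compact case, and in the latter apply Theorem~\ref{t:KA3} with $X$ (or rather the relevant $\sigma$-compact subgroup) as the Baire $K$-analytic group and $\N$, the $\sigma$-ideal of Haar-null sets, as the left-invariant ccc $\sigma$-ideal with functionally Borel base. The nontrivial implication is that Haar-measurability implies continuity; the converse is trivial.

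First I would handle the passage to $k$-space arguments: a locally compact group is a $k$-space, so $h$ is continuous as soon as $h{\restriction}_K$ is continuous for every compact $K\subseteq X$. Given such a $K$, let $H$ be the $\sigma$-compact open-closed subgroup of $X$ generated by a compact neighborhood of the identity together with $K$; an open subgroup of a locally compact group is closed and locally compact, and being generated by countably many compact sets it is $\sigma$-compact. Since $H$ is open, Haar-measurability of $h$ restricts to Haar-measurability of $h{\restriction}_H\colon H\to Y$ with respect to the Haar measure of $H$ (the restriction of the Haar measure of $X$ to $H$, up to normalization; Haar-null subsets of $H$ are Haar-null in $X$ and conversely for subsets of $H$). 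So it suffices to prove the lemma for a $\sigma$-compact locally compact group, which I now assume $X$ to be.

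For $\sigma$-compact locally compact $X$: such a group is Lindel\"of and \v Cech-complete (locally compact Hausdorff spaces are \v Cech-complete, and $\sigma$-compact implies Lindel\"of), hence $K$-analytic and Baire by the equivalence $(4)\Leftrightarrow(1)$ in Theorem~\ref{t:Cech} (or directly by \cite[3.9.4]{Eng}). By Lemma~\ref{l:N-fBb}, the $\sigma$-ideal $\N$ of Haar-null sets has a functionally Borel base, is ccc, and satisfies $\Bo^\pm\!\N=\Ba^\pm\!\N$, which in particular gives $\Bo\subseteq\Ba^\pm\!\N=\BaI$ with $\I=\N$. The ideal $\N$ is left-invariant because the Haar measure is left-invariant. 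Thus all the hypotheses of Theorem~\ref{t:KA3} are met with $\I=\N$. A Haar-measurable homomorphism $h$ is, by definition, $\Bo^\pm\!\N$-measurable (the preimage of each open set lies in the $\sigma$-algebra of Haar-measurable sets, which by Lemma~\ref{l:N-fBb}(3) equals $\BoI=\BaI$). Hence by Theorem~\ref{t:KA3} $(2)\Ra(1)$ the homomorphism $h$ is continuous.

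The main obstacle is the reduction step: one must be careful that restricting to the $\sigma$-compact open subgroup $H$ genuinely preserves Haar-measurability, i.e. that Haar-measurable subsets of $X$ meet $H$ in Haar-measurable subsets of $H$ and that the restricted measure is (a multiple of) the Haar measure of $H$. This is standard once one recalls that an open subgroup carries the subspace measure as its Haar measure and that $\sigma$-compact locally compact groups are exactly those whose Haar-null ideal fits the framework of Lemma~\ref{l:N-fBb}; without $\sigma$-compactness the ideal $\N$ need not have a functionally Borel base, which is precisely why the $k$-space reduction to compact sets (and thence to $\sigma$-compact open subgroups) is needed rather than working with $X$ directly.
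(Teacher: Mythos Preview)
Your proposal is correct and follows essentially the same approach as the paper: reduce to an open $\sigma$-compact subgroup $H$, verify via Lemma~\ref{l:N-fBb} and Theorem~\ref{t:Cech} that $H$ is Baire $K$-analytic and that the Haar-null ideal $\N$ satisfies the hypotheses of Theorem~\ref{t:KA3}, and then apply that theorem. The only cosmetic difference is that the paper takes a single open $\sigma$-compact subgroup $H$ generated by a compact neighborhood of the identity and concludes directly from the openness of $H$ that continuity of $h{\restriction}_H$ implies continuity of $h$; your detour through the $k$-space property and a subgroup depending on $K$ is valid but unnecessary, since continuity on any open subgroup already yields global continuity by translation.
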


\begin{proof} The ``only if'' part is trivial. To prove the ``if'' part, assume that the homomorphism $h$ is Haar-measurable. Let $\lambda$ be a Haar measure on $X$ and $H$ the subgroup of $X$ generated be any open compact neighborhood of the identity of $X$. Then $H$ is an open $\sigma$-compact subgroup of $X$. Since $\lambda$ restricted to the $\sigma$-algebra of Borel subsets of $H$ is a Haar measure on $H$, the Haar-measurability of $h$ implies the Haar-measurability of the restriction $h{\restriction}_H$. Being $\sigma$-compact, the locally compact group $H$ is $\w$-narrow and \v Cech complete. By Theorem~\ref{t:Cech}, $H$ is a Baire $K$-analytic group. 
By Lemma~\ref{l:N-fBb}, the $\sigma$-ideal $\N$ of Haar-null sets in $H$ has functionally Borel base, is ccc, and $\Ba^\pm\!\N=\Bo^\pm\!\N$ is the $\sigma$-algebra of Haar-measurable subsets of the locally compact group $H$. By Theorem~\ref{t:KA3}, the $\Bo^\pm\!\N$-measurable homomorphism $h{\restriction}_H$ is continuous and so is the homomorphism  $h$ (as $H$ is open in $X$).
\end{proof}

\section{Proof of Theorem~\ref{t:main-UM}}\label{s:9}

This section is devoted to the proof of Theorem~\ref{t:main-UM}. First we recall some definitions and known results.

By a {\em probability Radon measure} on a Tychonoff space $X$ we understand a $\sigma$-additive measure $\mu:\Bo(X)\to[0,\infty]$ on the $\sigma$-algebra of Borel subsets of $X$ such that $\mu(X)=1$ and for any Borel set $B\subseteq X$ and  real number $a<\mu(B)$ there exists a compact set $K\subseteq B$ of measure $\mu(K)>a$.

Let $\hat PX$ be the space of all probability Radon measure on $X$. The space $\hat PX$ carries the topology generated by the subbase consisting of the sets $\{\mu\in\hat PX:\mu(U)>a\}$ where $a\in\IR$ and $U$ runs over open subsets of $X$. 

Any continuous map $f:X\to Y$ between topological spaces induces a continuous map $\hat Pf:\hat PX\to\hat PY$ between the corresponding spaces of probability Radon measures. To each measure $\mu\in\hat PX$ the map $\hat Pf$ assigns the measure $\lambda\in \hat P Y$ such that $\lambda(B)=\mu(f^{-1}[B])$ for any Borel set $B$ in $Y$. The construction $\hat P$ determines a function in the category of Tychonoff spaces and their continuous maps. Categorial properties of this functor have been thoroughly studied in \cite{Ban95a}, \cite{Ban95b}. It is known that the $\hat P$ preserves perfect surjective maps between Tychonoff spaces.

A function $f:X\to Y$ between topological spaces is called {\em perfect} if it is  continuous, closed, and has compact preimages of points, see \cite[3.7]{Eng}. 

\begin{lemma}\label{l:sur} For any perfect surjective map $f:X\to Y$ between Tychonoff spaces, the map $\hat Pf:\hat PX\to \hat PY$ is perfect and surjective.
\end{lemma}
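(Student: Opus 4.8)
The plan is to reduce the assertion to the classical compact case by passing to Stone--\v Cech compactifications. Let $\bar f:\beta X\to\beta Y$ be the continuous extension of $f$. Since $f$ is a continuous surjection, $\bar f$ is surjective as well (its image is compact, hence closed in $\beta Y$, and contains the dense subset $Y=f[X]$), and since $f$ is perfect, $\bar f$ satisfies $\bar f^{-1}[Y]=X$, equivalently $\bar f[\beta X\setminus X]\subseteq\beta Y\setminus Y$; this is the standard characterization of perfectness of a continuous surjection of Tychonoff spaces. On a compact Hausdorff space every Borel measure is Radon, so $\hat P\beta X$ and $\hat P\beta Y$ are the usual compact spaces of probability measures, and the induced map $\hat P\bar f:\hat P\beta X\to\hat P\beta Y$ is a continuous \emph{surjection} between compacta: dually $C(\bar f):C(\beta Y)\to C(\beta X)$, $g\mapsto g\circ\bar f$, is an isometric linear embedding onto a closed subspace containing the constant functions, so every Radon probability measure on $\beta Y$, viewed as a positive functional of norm one on $C(\beta Y)$, extends along $C(\bar f)$ and then, by Hahn--Banach and the Riesz representation theorem, to a Radon probability measure on $\beta X$ whose $\bar f$-push-forward is the given measure. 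In particular $\hat P\bar f$, being a continuous map of compacta, is closed and has compact fibres.

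Next I would realize $\hat P X$ as a topological subspace of $\hat P\beta X$, namely as $\{\mu\in\hat P\beta X:\mu(\beta X\setminus X)=0\}$. Indeed, any $\mu\in\hat P\beta X$ concentrated on $X$ restricts to a probability Radon measure on $X$ (compact subsets of $X$ are compact in $\beta X$), while a probability Radon measure $\nu$ on $X$ extends to a Radon measure $\tilde\nu$ on $\beta X$ with $\tilde\nu(\beta X\setminus X)=0$, because the inner regularity of $\nu$ gives $\sup\{\nu(K):K\subseteq X\text{ compact}\}=\nu(X)=1$, i.e.\ $\nu$ is tight. That this correspondence is a homeomorphism onto its image follows by matching the subbasic sets $\{\mu:\mu(U)>a\}$, using that each open $U\subseteq X$ is of the form $V\cap X$ with $V$ open in $\beta X$ and that $\tilde\mu(V)=\mu(V\cap X)$ for measures concentrated on $X$. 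The same applies to $\hat P Y\subseteq\hat P\beta Y$.

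With these identifications the two conclusions follow readily. \emph{Surjectivity}: given $\nu\in\hat P Y$, extend it to $\tilde\nu\in\hat P\beta Y$ concentrated on $Y$, pick $\mu\in\hat P\beta X$ with $\hat P\bar f(\mu)=\tilde\nu$, and observe $\mu(\beta X\setminus X)=\mu(\bar f^{-1}[\beta Y\setminus Y])=\tilde\nu(\beta Y\setminus Y)=0$; thus $\mu$ is concentrated on $X$ and its restriction $\mu_0\in\hat P X$ satisfies $\hat P f(\mu_0)=\nu$. \emph{Compact fibres}: the same computation shows $(\hat P f)^{-1}(\nu)=(\hat P\bar f)^{-1}(\tilde\nu)$ inside $\hat P\beta X$, and the latter set is closed in the compact space $\hat P\beta X$, hence compact. \emph{Closedness}: for a closed $F\subseteq\hat P X$, let $\bar F$ be its closure in $\hat P\beta X$; then $\hat P\bar f[\bar F]$ is compact, hence closed in $\hat P\beta Y$, and I claim $\hat P f[F]=\hat P\bar f[\bar F]\cap\hat P Y$. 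Here $\subseteq$ is immediate, and for $\supseteq$, if $\nu\in\hat P Y$ equals $\hat P\bar f(\mu)$ with $\mu\in\bar F$, then the extension $\tilde\nu\in\hat P\beta Y$ of $\nu$ is concentrated on $Y$, so the fibre computation forces $\mu$ to be concentrated on $X$; hence $\mu\in\hat P X\cap\bar F=F$ (closure in a subspace is the trace of the closure) and $\hat P f(\mu)=\nu$. Since $\hat P f$ is continuous (as already noted for arbitrary continuous maps), closed, and has compact fibres, it is perfect.

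The step I expect to be the most delicate is the bookkeeping for the embedding $\hat P X\hookrightarrow\hat P\beta X$: one has to verify both that probability Radon measures are tight enough to extend to $\beta X$ while remaining concentrated on $X$, and that the extension is a genuine topological embedding rather than merely a continuous bijection onto its image. Everything else is classical — the characterization $\bar f^{-1}[Y]=X$ of perfect surjections and the surjectivity of $\hat P$ on continuous surjections of compacta — and in fact the whole lemma may alternatively be quoted from the categorical analysis of the functor $\hat P$ carried out in \cite{Ban95a}, \cite{Ban95b}.
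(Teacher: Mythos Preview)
The paper's proof is by direct citation: it invokes \cite[2.2]{Ban95a} for perfectness of $\hat Pf$ and \cite[2.9]{Ban95a} for density of its image, then observes that a perfect map has closed image, so dense plus closed yields surjectivity. Your proof instead unpacks the machinery: you pass to Stone--\v Cech compactifications, embed $\hat PX$ and $\hat PY$ into $\hat P\beta X$ and $\hat P\beta Y$ as the measures concentrated on $X$ and $Y$, handle $\hat P\bar f$ in the classical compact case, and pull everything back using the identity $\bar f^{-1}[Y]=X$ that characterizes perfect surjections. This is essentially a self-contained reproof of what \cite{Ban95a} supplies (and is indeed the natural way such results are established there), so your route is not so much different as more explicit; what it buys is independence from the external reference, at the cost of length.

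One technical point deserves tightening. Writing $\mu(\beta X\setminus X)=0$ and $\tilde\nu(\beta Y\setminus Y)=0$ presumes these sets are Borel, which can fail for general Tychonoff $X,Y$. The correct formulation is via inner measure: identify $\hat PX$ with $\{\mu\in\hat P\beta X:\mu_*(X)=1\}$, i.e.\ for each $\varepsilon>0$ there is a compact $K\subseteq X$ with $\mu(K)>1-\varepsilon$. Your surjectivity and fibre arguments then go through cleanly: for $\tilde\nu$ concentrated on $Y$ and any compact $L\subseteq Y$, the set $\bar f^{-1}[L]$ is compact (closed in $\beta X$) and contained in $\bar f^{-1}[Y]=X$, with $\mu(\bar f^{-1}[L])=\tilde\nu(L)$; since $\sup_L\tilde\nu(L)=1$ we get $\mu_*(X)=1$. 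You already flagged the embedding bookkeeping as the delicate step, and this is exactly where the care is needed---but once phrased with inner measure the argument is sound.
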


\begin{proof} By \cite[2.2]{Ban95a}, the map $\hat Pf$ is perfect and by \cite[2.9]{Ban95a}, the image $\hat Pf[\hat PX]$ is dense in $\hat PY$ and being closed, coincides with $\hat PY$. This means that the map $\hat Pf$ is surjective.
\end{proof}  

A subset $A$ of a topological space $X$ is called {\em universally measurable} if $A$ is {\em $\mu$-measurable} with respect to any probability Radon measure $\mu$ on $X$. The latter means that $\mu(B'\setminus B)=0$ for some Borel sets $B,B'$ in $X$ with $B\subseteq A\subseteq B'$.

\begin{lemma}\label{l:pum} Let $p:X\to Y$ be a perfect surjective map between  Tychonoff spaces. A subset $A\subseteq Y$ is universally measurable in $Y$ if its preimage $p^{-1}[A]$ is universally measurable in $X$.
\end{lemma}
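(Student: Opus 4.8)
The plan is to reduce universal measurability of $A$ in $Y$ to universal measurability of $p^{-1}[A]$ in $X$ by transporting an arbitrary probability Radon measure on $Y$ back to one on $X$ via the surjectivity half of Lemma~\ref{l:sur}. Fix a probability Radon measure $\lambda\in\hat PY$. Since $p$ is perfect and surjective, Lemma~\ref{l:sur} tells us that $\hat Pp:\hat PX\to\hat PY$ is surjective, so there exists $\mu\in\hat PX$ with $\hat Pp(\mu)=\lambda$; by the definition of $\hat Pp$ this means $\lambda(B)=\mu(p^{-1}[B])$ for every Borel set $B\subseteq Y$. By hypothesis $p^{-1}[A]$ is $\mu$-measurable, so there are Borel sets $C\subseteq p^{-1}[A]\subseteq C'$ in $X$ with $\mu(C'\setminus C)=0$.

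The first substantial step is to replace $C$ and $C'$ by Borel sets that are themselves preimages under $p$, so that they descend to $Y$. Here I would use the inner regularity of $\mu$: choose an increasing sequence of compact sets $K_n\subseteq C$ with $\mu\big(C\setminus\bigcup_n K_n\big)=0$, and similarly compact sets $L_n\subseteq X\setminus C'$ with $\mu\big((X\setminus C')\setminus\bigcup_n L_n\big)=0$. Then $p[K_n]$ and $p[L_n]$ are compact (hence Borel) in $Y$ because $p$ is continuous. Set $B\defeq\bigcup_n p[K_n]$ and $B'\defeq Y\setminus\bigcup_n p[L_n]$, both Borel in $Y$. One checks $B\subseteq A\subseteq B'$: if $y\in B$ then $y=p(x)$ for some $x\in K_n\subseteq C\subseteq p^{-1}[A]$, so $y\in A$; and if $y\in A$ then every $x\in p^{-1}(y)$ lies in $p^{-1}[A]\subseteq C'$, hence $p^{-1}(y)\cap\bigcup_n L_n=\emptyset$, i.e. $y\notin\bigcup_n p[L_n]$, so $y\in B'$.

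It remains to verify $\lambda(B'\setminus B)=0$. Using $\lambda(E)=\mu(p^{-1}[E])$, it suffices to show $\mu\big(p^{-1}[B'\setminus B]\big)=0$. Now $p^{-1}[B'\setminus B]=p^{-1}[B']\setminus p^{-1}[B]$, and from the inclusions above $\bigcup_n K_n\subseteq p^{-1}[B]$ while $p^{-1}[B']\subseteq X\setminus\bigcup_n L_n$; hence $p^{-1}[B'\setminus B]$ is contained in $\big(X\setminus\bigcup_n L_n\big)\setminus\bigcup_n K_n$. This last set differs from $C'\setminus C$ by subsets of the $\mu$-null sets $C\setminus\bigcup_n K_n$ and $(X\setminus C')\setminus\bigcup_n L_n$, so it has $\mu$-measure $\mu(C'\setminus C)=0$. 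Therefore $\lambda(B'\setminus B)=0$, and since $\lambda\in\hat PY$ was arbitrary, $A$ is universally measurable in $Y$.

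I expect the main obstacle to be the bookkeeping in the second step: one must be careful that the inner-regularity approximations on $C$, $X\setminus C'$ are chosen so that the images $p[K_n]$, $p[L_n]$ sandwich $A$ on the nose, which relies precisely on the surjectivity of $p$ (for $B\subseteq A$) and on the fibrewise nature of the inclusion $p^{-1}[A]\subseteq C'$ (for $A\subseteq B'$). Everything else — Borel measurability of the compact images, the pushforward identity, and the final null-set estimate — is routine once Lemma~\ref{l:sur} is in hand.
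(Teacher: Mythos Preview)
Your proof is correct and follows essentially the same route as the paper: lift the given Radon measure on $Y$ to one on $X$ via Lemma~\ref{l:sur}, sandwich $p^{-1}[A]$ between Borel sets, replace these by $\sigma$-compact inner approximations, and push their images forward to $Y$ to obtain the required Borel sandwich of $A$. The paper's argument is structurally identical (with $K=\bigcup_n K_n$, $K'=\bigcup_n L_n$, $S=p[K]$, $S'=Y\setminus p[K']$), only terser in checking the inclusions $S\subseteq A\subseteq S'$ and the null-set estimate.
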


\begin{proof} Let $A\subseteq Y$ be a subset whose preimage $p^{-1}[A]$ is universally measurable in $X$. To show that $A$ is universally measurable in $Y$, take any probability Radon measure $\mu$ on $Y$. By Lemma~\ref{l:sur}, there exists a probability Radon measure $\lambda$ on $X$ such that $\mu=\hat Pf(\lambda)$. Since the set $p^{-1}[A]$ is universally measurable, there exist Borel sets $B,B'$ in $X$ such that $B\subseteq p^{-1}[A]\subseteq B'$ and $\lambda(B'\setminus B)=0$. Since the measure $\lambda$ is Radon, there exist $\sigma$-compact sets $K\subseteq B$ and $K'\subseteq X\setminus B'$ such that $\lambda(K)=\lambda(B)$ and $\lambda(K')=\lambda(X\setminus B')$. Then $G=X\setminus K'$ is a $G_\delta$-set in $X$ such that $K\subseteq B\subseteq h^{-1}[A]\subseteq B'\subseteq G$ and $\lambda(G\setminus K)=0$. 

Consider the $\sigma$-compact set $S=p[K]$ and the $G_\delta$-set $S'=Y\setminus p[K']$ in $Y$. It follows from $K\subseteq p^{-1}[A] \subseteq X\setminus K'$ that $S\subseteq A\subseteq S'$ and $p^{-1}[S'\setminus S]\subseteq G\setminus K$. Then $\mu(S'\setminus S)=\lambda(p^{-1}[S'\setminus S])\le\lambda(G\setminus K)=0$, witnessing that the set $A$ is $\mu$-measurable and hence universally measurable.
\end{proof}

A function $f:X\to Y$ between topological spaces is {\em universally measurable} if for every open set $U\subseteq Y$ the preimage $f^{-1}[U]$ is universally measurable in $X$.

The following important fact was recently proved by Rosendal \cite{Ros19}.

\begin{lemma}[Rosendal]\label{l:Rosen} Every universally measurable homomorphism between Polish groups is continuous.
\end{lemma}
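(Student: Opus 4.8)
The plan is to reduce continuity to the single assertion that for every neighborhood $U$ of the identity $e_Y$ in $Y$ the preimage $h^{-1}[U]$ is a neighborhood of the identity in $X$; since $h$ is a homomorphism, this gives continuity at $e_X$ and hence everywhere. The tool on a Polish (hence \v Cech-complete, Baire, $K$-analytic) group $X$ that is adapted to \emph{universal} measurability is Christensen's $\sigma$-ideal $\N$ of \emph{Haar-null} sets: a universally measurable set $A\subseteq X$ is Haar-null if there is a Borel probability measure $\mu$ on $X$ with $\mu(gAg')=0$ for all $g,g'\in X$. The two-sided formulation is essential here, since the two-sided Haar-null sets form a left- and right-invariant $\sigma$-ideal with $X\notin\N$, whereas the one-sided variant need not even be an ideal on a non-abelian group.

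First I would establish that $h^{-1}[U]$ is $\N$-positive. Choose a symmetric open neighborhood $V\subseteq Y$ of $e_Y$ with $V^{-1}V\subseteq U$. As $Y$ is separable metrizable it is Lindel\"of, so the relatively open cover $\{yV:y\in h[X]\}$ of $h[X]$ admits a countable subcover $\{y_nV\}_{n\in\w}$ with $y_n\in h[X]$; writing $y_n=h(a_n)$ and using that $h$ is a homomorphism we get $X=\bigcup_{n\in\w}a_n\,h^{-1}[V]$. Each $a_nh^{-1}[V]$ is universally measurable, being a translate (hence a homeomorphic copy) of the universally measurable set $h^{-1}[V]$, and since $\N$ is a translation-invariant $\sigma$-ideal with $X\notin\N$, some translate, and therefore $h^{-1}[V]$ itself, is $\N$-positive.

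The decisive step is a Steinhaus--Weil property for the Haar-null ideal: \emph{every universally measurable $\N$-positive set $A\subseteq X$ has $A^{-1}A$ as a neighborhood of $e_X$}. Granting this for $A=h^{-1}[V]$ yields $A^{-1}A\subseteq h^{-1}[V^{-1}V]\subseteq h^{-1}[U]$, so $h^{-1}[U]$ contains a neighborhood of $e_X$, finishing the argument. To prove the Steinhaus step I would fix a witnessing Borel probability measure, study the convolution-type function $g\mapsto\mu(A\cap gA)$, and show by a Fubini argument, together with the continuity of translation in the weak topology on measures, that this function is positive on a neighborhood of $e_X$; positivity of $\mu(A\cap gA)$ forces $g\in AA^{-1}$, and a symmetric computation handles $A^{-1}A$.

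I expect this last step to be the genuine obstacle, and it is exactly where the classical proofs (valid for locally compact or abelian Polish groups) break down: without local compactness there is no Haar measure to convolve against, and without commutativity one cannot freely interchange left and right translations, so the naive Steinhaus--Weil argument fails and the measure-theoretic bookkeeping must be performed with care on the two-sided ideal. An alternative packaging of the same difficulty, internal to this paper, would be to verify that $\N$ is left-invariant, Steinhaus, and has a functionally Borel (hence functionally coanalytic) base with $\Bo\subseteq\Ba^\pm\!\N$, that every universally measurable set is $\Ba^\pm\!\N$-measurable (so that universally measurable functions are $\A^\curlyvee\!\N$-semimeasurable by Proposition~\ref{p:m=>sm}), and then to invoke Theorem~\ref{t:KA2}; but this route merely relocates the hard Steinhaus property into a hypothesis, and one must note that $\N$ is typically \emph{not} ccc on non-locally-compact groups, so the more elementary Theorem~\ref{t:KA3} does not apply.
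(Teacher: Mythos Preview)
The paper does not prove this lemma at all: it is quoted as Rosendal's theorem and cited to \cite{Ros19}, then used as a black box. So there is no ``paper's own proof'' to compare with.

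Your outline, however, has a genuine gap precisely at the point you flag as the obstacle. The Steinhaus--Weil property for Christensen's (even two-sided) Haar-null ideal is \emph{known to fail} on some non-locally-compact Polish groups; Solecki showed that it holds exactly for groups that are amenable at $1$, and groups such as $S_\infty$ or free groups with suitable Polish topologies are not. Concretely, there exist universally measurable non-Haar-null sets $A$ in such groups with $A^{-1}A$ not a neighborhood of the identity, so the convolution sketch $g\mapsto\mu(A\cap gA)$ cannot be made to work: without a Haar measure there is no translation-invariance to exploit, and the witnessing measure $\mu$ for $A\notin\N$ carries no information about $\mu(A\cap gA)$ for individual $g$. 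This is not a bookkeeping issue but an actual obstruction, and it is exactly why Christensen's problem remained open for decades until Rosendal's 2019 paper, whose argument is of an entirely different nature (a delicate analysis involving sequences of measures and left/right translations simultaneously, not a single Steinhaus step).

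Your alternative route via Theorem~\ref{t:KA2} founders for the same reason: you would need $\N$ to be Steinhaus in the sense of Section~\ref{s:4}, which again fails for the same groups. And, as you correctly note, $\N$ is not ccc on non-locally-compact groups, so Theorem~\ref{t:KA3} is unavailable. In short, any attempt to derive Lemma~\ref{l:Rosen} from the machinery of this paper must already input Rosendal's theorem (or an equivalent hard fact); the paper is right to cite it rather than attempt a proof.
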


For a probability Radon measure $\mu$ on a topological space $X$ and a set $A\subseteq X$ let $$\mu^*(A)\defeq\inf\{\mu(B):A\subseteq B\in \Bo(X)\}$$ be {\em the outer $\mu$-measure} of $A$.

A universally measurable subset $A$ of a topological group group $X$ is called {\em left Haar-null} if there exists a probability Radon measure $\mu$ on $X$ such that  $\mu^*(xA)=0$ for all $x\in X$. 

The following lemma was proved by Rosendal \cite[2.8]{Ros09}. 

\begin{lemma}[Rosendal]\label{l:Ros} If a universally measurable set $A$ in a Polish group $X$ is not left Haar-null, then there exists a finite set $F\subseteq X$ such that $\bigcup_{x\in F}x^{-1}A^{-1}Ax$ is a neighborhood of the identity in $X$.
\end{lemma}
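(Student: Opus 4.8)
The plan is to establish the contrapositive: assuming that $A$ is universally measurable and that $\bigcup_{x\in F}x^{-1}A^{-1}Ax$ is \emph{not} a neighbourhood of the identity $e$ of $X$ for \emph{every} finite $F\subseteq X$, we construct a probability Radon measure $\mu$ on $X$ with $\mu^*(gA)=0$ for all $g\in X$, witnessing that $A$ is left Haar-null and contradicting the hypothesis. Fix a compatible complete metric $d$ on the Polish group $X$ and a compatible left-invariant metric $\partial$ (the latter via the Birkhoff--Kakutani theorem). The measure will be $\mu:=\phi_*\lambda$, the push-forward of the uniform Bernoulli measure $\lambda$ on the Cantor space $2^\w$ under a continuous injection $\phi\colon 2^\w\to X$ built from a Cantor scheme of right products: put $h_\emptyset:=e$ and, having defined $h_s$ for $s\in 2^{<\w}$, choose ``small'' elements $a_{s,0},a_{s,1}$ and set $h_{s{}^\frown i}:=h_sa_{s,i}$, arranging that the $d$- and $\partial$-diameters of $\{h_t:t\supseteq s\}$ tend to $0$. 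Then $\phi(\beta):=\lim_n h_{\beta{\restriction}_n}$ is well defined, $\phi$ is continuous and (once $a_{s,0}\ne a_{s,1}$ always) injective, and $\mu$ is a probability Radon measure carried by the compact set $\phi[2^\w]$.

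The crucial identity is that for distinct $\beta,\gamma\in 2^\w$ with longest common initial segment $s$, say $\beta$ passing through $s{}^\frown 0$ and $\gamma$ through $s{}^\frown 1$, one computes $\phi(\beta)^{-1}\phi(\gamma)=u^{-1}c_sv$, where $c_s:=a_{s,0}^{-1}a_{s,1}$ and $u,v$ are the ``tails'' of $\phi(\beta),\phi(\gamma)$ below level $|s|+1$, lying in arbitrarily small neighbourhoods of $e$ that are prescribable in advance of choosing the deeper $a$'s. Hence $\phi(\beta),\phi(\gamma)\in gA$ would force $\phi(\beta)^{-1}\phi(\gamma)\in A^{-1}g^{-1}gA=A^{-1}A$, i.e.\ $c_s\in u(A^{-1}A)v^{-1}$. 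Since an easy computation gives $\bigcap_{W}\bigcup_{u,v\in W}u(A^{-1}A)v^{-1}=\overline{A^{-1}A}$ with $W$ ranging over neighbourhoods of $e$, if $\overline{A^{-1}A}$ were \emph{not} a neighbourhood of $e$ we could pick at every node $s$ an element $c_s\notin\overline{A^{-1}A}$ with $c_s\to e$ and then pick the future $a$'s small enough that $c_s\notin u(A^{-1}A)v^{-1}$ for all resulting tails $u,v$; this would make $|\phi[2^\w]\cap gA|\le 1$ for every $g$, whence $\mu^*(gA)=0$ (using that $gA$ is $\mu$-measurable by universal measurability of $A$, that $\phi$ is injective, and that $\lambda$ is non-atomic), so $A$ would be left Haar-null. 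Thus a first approximation already yields that $\overline{A^{-1}A}$ is a neighbourhood of $e$.

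The remaining — and genuinely delicate — point is to upgrade this to the stated conclusion: that a \emph{finite} union $\bigcup_{x\in F}x^{-1}A^{-1}Ax$ is a neighbourhood of $e$, with no closure. One cannot pass from $\overline{A^{-1}A}$ to $A^{-1}A$ for free, and, lacking local compactness, one cannot cover a neighbourhood of $e$ by finitely many translates. Instead the construction must be run relative to a fixed countable dense set $D\subseteq X$ with an increasing exhaustion $D=\bigcup_n D_n$ by finite sets: at a node $s$ of length $n$ one must now choose $c_s$ avoiding a neighbourhood-fattening of $\bigcup_{d\in D_n}d^{-1}(A^{-1}A)d$, the conjugates entering through the rewriting $u^{-1}c_sv=(u^{-1}c_su)(u^{-1}v)$, which presents $\phi(\beta)^{-1}\phi(\gamma)$ as a small perturbation of a conjugate of $c_s$ by a tail element. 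If this avoidance were possible at every scale the construction would again produce $\mu$ with all $\mu^*(gA)=0$; so it must fail at some scale, and tracing back the failure — via a Fubini-type argument for the product measure $\mu\times\mu$ (i.e.\ for $\int\mu(gA)\,d\mu(g)$) together with a Baire-category extraction of a finite $F\subseteq D$, both exploiting the universal measurability of $A$ — yields a finite $F$ with $\bigcup_{x\in F}x^{-1}A^{-1}Ax$ having nonempty interior, as required. This last step is where essentially all the difficulty lies; the Cantor-scheme bookkeeping above is routine.
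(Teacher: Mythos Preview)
The paper does not prove this lemma; it merely cites \cite[2.8]{Ros09}. So there is no in-paper argument to compare your attempt against.

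Your first approximation is correct: with a right-product scheme and $c_s\notin\overline{A^{-1}A}$ one does obtain a probability measure $\mu$ with $\mu(gA)=0$ for every $g$, proving that if $A$ is universally measurable and not left Haar-null then $\overline{A^{-1}A}$ is a neighbourhood of $e$. The upgrade to the full lemma, however, has a genuine gap. In your right-product scheme the event $\phi(\beta),\phi(\gamma)\in gA$ is equivalent to $\phi(\beta)^{-1}\phi(\gamma)\in A^{-1}A$, a condition \emph{independent of $g$}; the construction therefore only sees $A^{-1}A$ and its closure, and cannot distinguish the contrapositive hypothesis ``no finite union of conjugates of $A^{-1}A$ is a neighbourhood of $e$'' from the much stronger ``$\overline{A^{-1}A}$ is not a neighbourhood of $e$''. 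The rewriting $u^{-1}c_sv=(u^{-1}c_su)(u^{-1}v)$ conjugates $c_s$ by the tail $u$, which is close to $e$, not by an element of your dense set $D$; the assertion that conjugates by $d\in D_n$ ``enter through this rewriting'' is unsupported. And your last paragraph is not an argument: you switch from the contrapositive to a direct conclusion and invoke Fubini and Baire category without saying what function is integrated or what open cover is examined.

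What is actually needed is a scheme in which the already-constructed nodes $h_s$ themselves serve as the conjugating elements, so that the finite set $\{h_s:s\in 2^n\}$ is precisely what one feeds into the contrapositive hypothesis at stage $n$. Reversing the order of multiplication (left products $h_{s\frown i}=a_{s,i}h_s$) achieves this, giving $\phi(\beta)^{-1}\phi(\gamma)=h_s^{-1}(\text{small})\,h_s$ and an obstruction lying in $h_s(A^{-1}A)h_s^{-1}$. Even then one must argue why avoiding only the \emph{unfattened} union $\bigcup_{s\in 2^n}h_s(A^{-1}A)h_s^{-1}$ --- which is all the hypothesis guarantees --- suffices; this requires replacing the crude ``at most one point of $\phi[2^\w]$ per translate'' estimate by a genuine measure-zero argument. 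None of this mechanism is present in your sketch.
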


Also we shall need the following combinatorial lemma that can be found in \cite[3.31]{Pachl}.

\begin{lemma}[Pachl]\label{l:Pachl} Let $\kappa$ be an infinite cardinal, $A$ be a set of cardinality $<\kappa$ in a group $G$ and $\F$ be a finite family of subsets of $G$ such that $G=\bigcup_{F\in\F}FA$. Then there exists a set $F\in\F$ such that $G=F^{-1}FB$ for some set $B\subseteq G$ of cardinality $<\kappa$.
\end{lemma}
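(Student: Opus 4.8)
The plan is to prove the contrapositive by a maximality argument combined with a pigeonhole over the finite cover $G=\bigcup_{F\in\F}FA$. The key elementary fact, which in particular settles the case $|\F|=1$, is that if a single $F$ satisfies $FA'=G$ then $F^{-1}FA'=F^{-1}(FA')=F^{-1}G=G$, so $B=A'$ works; more generally, since $e\in F^{-1}F$, any point missed by $F^{-1}FA$ can be adjoined at unit cost, so if $F^{-1}FA$ misses fewer than $\kappa$ points of $G$ the conclusion already holds for $F$. Thus the entire difficulty is concentrated in the case where $G\setminus F^{-1}FA$ is large for every $F\in\F$.

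Suppose then, for a contradiction, that $F^{-1}FB\ne G$ for every $F\in\F$ and every $B\subseteq G$ with $|B|<\kappa$. Fix $F\in\F$ and, by the Kuratowski--Zorn Lemma, choose a maximal $B_F\subseteq G$ such that the sets $\{Fb:b\in B_F\}$ are pairwise disjoint. Maximality gives, for each $g\in G$, some $b\in B_F$ with $Fg\cap Fb\ne\emptyset$, i.e.\ $g\in F^{-1}Fb$; hence $F^{-1}FB_F=G$, and our assumption forces $|B_F|\ge\kappa$. Now $B_F\subseteq G=\bigcup_{F'\in\F}F'A$, and there are fewer than $\kappa$ ``cells'' $F'a$ (with $F'\in\F$ and $a\in A$), so they cannot all meet $B_F$ in finitely many points; hence some cell $F'a$ meets $B_F$ in infinitely many points, and in $\kappa$ many when $\kappa$ is regular. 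For two distinct points $b=f_1a$, $b'=f_2a$ of $B_F\cap F'a$ (with $f_1,f_2\in F'$) we get $bb'^{-1}=f_1f_2^{-1}\in F'F'^{-1}$, whereas $Fb\cap Fb'=\emptyset$ forces $bb'^{-1}\notin F^{-1}F$. This yields a map $\sigma:\F\to\F$ with $\sigma(F)\sigma(F)^{-1}\not\subseteq F^{-1}F$ for every $F$; moreover $\sigma(F)$ contains a large subset all of whose pairwise quotients avoid $F^{-1}F$.

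It remains to derive a contradiction from this data, which one does by running around a cycle $F_0\to F_1\to\cdots\to F_{k-1}\to F_0$ of $\sigma$ and playing the resulting chain of non-inclusions among the sets $F_i^{-1}F_i$ and $F_iF_i^{-1}$ against the cardinality estimates --- using, among other things, the bijection $x\mapsto x^{-1}$ from $F^{-1}F$ onto $FF^{-1}$ and the large quotient-free subsets sitting inside successive members of the cycle. I expect this last step to be the main obstacle: because $G$ need not be abelian, $F^{-1}F$ and $FF^{-1}$ genuinely differ, so the bare existence of $\sigma$ is not yet absurd, and one must carefully exploit how the quotient-free subsets nest (for abelian $G$ the difficulty disappears, since there $FF^{-1}=F^{-1}F$ already contradicts the existence of $\sigma$); and for singular $\kappa$ the phrase ``$\kappa$ points in one cell'' has to be replaced by a cofinality argument. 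The statement is Lemma~3.31 of \cite{Pachl}, whose proof can be consulted for the details I have left open here.
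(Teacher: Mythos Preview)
The paper does not prove this lemma at all; it simply quotes it from Pachl's monograph \cite[3.31]{Pachl}. So there is no ``paper's own proof'' to compare against, and your final sentence --- deferring the missing steps to exactly the same reference --- means your proposal reduces to the same citation the paper already gives.

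That said, your sketch has a genuine gap that is worth naming. You construct, for each $F\in\F$, some $\sigma(F)\in\F$ with $\sigma(F)\sigma(F)^{-1}\not\subseteq F^{-1}F$, and then hope that running around a cycle of $\sigma$ yields a contradiction. But a cycle of non-inclusions is not contradictory: ``$\not\subseteq$'' is not transitive, and there is nothing preventing the sets $F^{-1}F$ from being pairwise incomparable. Your parenthetical claim that in the abelian case ``$FF^{-1}=F^{-1}F$ already contradicts the existence of $\sigma$'' is false. For a concrete counterexample take $G=\mathbb Z^2$, $F_0=\{(0,0),(1,0)\}$, $F_1=\{(0,0),(0,1)\}$; then $F_0F_0^{-1}=\{(-1,0),(0,0),(1,0)\}$ and $F_1F_1^{-1}=\{(0,-1),(0,0),(0,1)\}$ are incomparable, so $\sigma(F_0)=F_1$, $\sigma(F_1)=F_0$ gives a perfectly consistent $2$-cycle. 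The additional ``large quotient-free subsets'' you mention do not obviously help either, since they live inside different members of $\F$ and there is no evident way to play them against one another.

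In short, the maximal-packing step producing $F^{-1}FB_F=G$ is fine and standard, but the pigeonhole-and-cycle idea you outline after that is not merely missing details --- it is not clear it can be completed at all, and the one case you assert is easy is not. If you want a self-contained proof you will need a different mechanism (Pachl's argument proceeds by induction on $|\F|$), or else simply cite \cite[3.31]{Pachl} as the paper does.
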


This lemma implies the following characterization of $\w$-narrowness, mentioned in \cite{BGR} and attributed there to Pachl.

\begin{lemma}[Pachl]\label{l:Pachl-w} A topological group $X$ is $\w$-narrow if and only if for any neighborhood $U$ of the identity in $X$ there exist a finite set $A\subseteq G$ and a countable set $B\subseteq G$ such that $G=AUB$.
\end{lemma}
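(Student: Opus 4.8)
The plan is to deduce this equivalence from Lemma~\ref{l:Pachl} applied with $\kappa=\w_1$, together with the definition of $\w$-narrowness. First consider the easy direction: if $X$ is $\w$-narrow and $U$ is a neighborhood of the identity, then by definition there is a countable set $C\subseteq X$ with $X=CU$ (taking the interior of $U$ if necessary to get a genuine open set); one then takes $A=\{e\}$ and $B=C$, so that $X=AUB$ with $A$ finite and $B$ countable. (Here one uses that $CU=UU^{-1}CU\supseteq$\dots actually more simply, $X=CU$ directly gives $X=\{e\}\cdot U\cdot$ with the roles arranged; to match the form $AUB$ exactly, write $X=CU=\bigcup_{c\in C}cU$, and since left-translation is a homeomorphism one may equally well absorb $C$ on the right after passing to $U^{-1}$, but the cleanest statement is simply $A=\{e\}$, $B=C$ after replacing $U$ by an open symmetric subneighborhood and noting $X=CU$ yields $X=UB$ with $B$ countable by a standard symmetry argument. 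I would spell this out carefully but it is routine.)

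For the nontrivial direction, assume that for every neighborhood $U$ of the identity there are a finite set $A\subseteq X$ and a countable set $B\subseteq X$ with $X=AUB$. Fix an open neighborhood $U$ of the identity and choose an open symmetric neighborhood $V$ with $V\cdot V\subseteq U$. Applying the hypothesis to $V$, we get a finite $A$ and a countable $B$ with $X=AVB$. Now I want to push the finite set $A$ through to the other side using Lemma~\ref{l:Pachl}. The idea is to cover $X$ by finitely many right-translates of a small set and invoke Pachl's combinatorial lemma with $\kappa=\w_1$: since $B$ is countable, $B$ has cardinality $<\w_1$, and writing $X=\bigcup_{a\in A}(aV)B$ exhibits $X$ as a union over the finite family $\F=\{aV:a\in A\}$ of sets of the form $FB$ with $|B|<\w_1$. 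Lemma~\ref{l:Pachl} then yields some $a\in A$ with $X=(aV)^{-1}(aV)B'=V^{-1}a^{-1}aVB'=V^{-1}VB'$ for some $B'\subseteq X$ with $|B'|<\w_1$, i.e. $B'$ countable. Since $V$ is symmetric, $V^{-1}V=VV\subseteq U$, so $X\subseteq UB'$ with $B'$ countable.

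Finally I need both $X=CU$ and $X=UC$ for a single countable $C$. From $X=UB'$ with $B'$ countable, taking inverses and using that $U$ may be chosen symmetric gives $X=X^{-1}=B'^{-1}U^{-1}=B'^{-1}U$, so $X=B''U$ with $B''=B'^{-1}$ countable. Then $C=B'\cup B''$ (or a single countable set obtained by a symmetric choice) works: more carefully, one runs the argument for a symmetric $U$ to get a countable $C$ with $X=UC$, and since $X=X^{-1}$ and $U=U^{-1}$ this also gives $X=C^{-1}U$; enlarging $C$ to $C\cup C^{-1}$, still countable, we obtain $X=UC=CU$, and for an arbitrary (not necessarily symmetric) neighborhood of the identity one first shrinks to a symmetric one. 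This establishes $\w$-narrowness. The main obstacle is the middle step — correctly setting up the application of Lemma~\ref{l:Pachl} so that the finite set is the one being "doubled" (it must be the finite family $\F$, not the small set $B$, that produces the $F^{-1}F$ on the right), and keeping track of the symmetry of $V$ so that $V^{-1}V$ lands inside $U$.
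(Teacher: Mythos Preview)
Your argument is correct and follows essentially the same route as the paper: apply the hypothesis to a neighborhood $V$ with $V^{-1}V\subseteq U$, invoke Lemma~\ref{l:Pachl} with $\kappa=\w_1$ and $\F=\{aV:a\in A\}$ to obtain $X=V^{-1}VC$ for some countable $C$, and conclude $X=C^{-1}U$. Your exposition of the trivial direction and the final symmetry bookkeeping is more hesitant than it needs to be (the paper dispatches ``only if'' in one word and finishes with $G=G^{-1}=C^{-1}V^{-1}V\subseteq C^{-1}U$), but the mathematics is the same.
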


\begin{proof} The ``only if'' part is trivial. To prove the ``if'' part, take any  neighborhood $U$ of the identity $e$ in $G$ and find a neighborhood $V$ of $e$ such that $V^{-1}V\subseteq U$. By our assumption, there exist a finite set $A$ and a countable set $B$ such that $G=AVB$. By Lemma~\ref{l:Pachl}, there exists an element $a\in A$ and a countable set $C$ in $G$ such that $G=(aV)^{-1}aVC=V^{-1}VC$ and hence $G=G^{-1}=C^{-1}V^{-1}V\subseteq C^{-1}U$, witnessing that the topological group $G$ is $\w$-narrow.
\end{proof}

\begin{lemma}\label{l:Haar} If $h:X\to Y$ is a universally measurable homomorphism from a Polish group $X$ to a topological group $Y$, then for every neighborhood $U$ of the identity in $Y$, the preimage $h^{-1}[U]$ is not left Haar-null in $X$.
\end{lemma}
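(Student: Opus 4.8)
The plan is to argue by contradiction, mirroring the structure of Propositions~\ref{p:a} and \ref{p:KA}. Suppose $h^{-1}[U]$ is left Haar-null in $X$ for some open neighborhood $U$ of the identity $e_Y$ in $Y$; shrinking $U$, I may assume $U=\{y\in Y:\rho(y,e_Y)<1\}$ for a left-invariant continuous pseudometric $\rho$ on $Y$, obtained from Markov's Theorem \cite[3.9]{AT}. Passing to the metric quotient $q:Y\to\tilde Y=Y/_\sim$ as in the proof of Proposition~\ref{p:a} and using the paracompactness of the metric space $(\tilde Y,\tilde\rho)$, I obtain a $\sigma$-discrete open cover $\V=\bigcup_{n\in\w}\V_n$ of $Y$ by sets of $\rho$-diameter $<1$, where each $\V_n$ is discrete. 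For each $V\in\V$, picking $v\in V$ and $x_V\in h^{-1}(v)$ gives $h^{-1}[V]=x_V\,h^{-1}[v^{-1}V]\subseteq x_V\,h^{-1}[U]$, so by left-invariance of the notion of a left Haar-null set (the witnessing measure for $h^{-1}[U]$ transports via left translation), each $h^{-1}[V]$ is left Haar-null in $X$.

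Next I would assemble a single witnessing measure for the whole family within one discrete layer $\V_n$. Since $X=\bigcup_{n\in\w}h^{-1}[\bigcup\V_n]$ and $X$ is not left Haar-null (being Polish and itself), some layer $h^{-1}[\bigcup\V_n]=\bigsqcup_{V\in\V_n}h^{-1}[V]$ is not left Haar-null. The key point is that a \emph{countable} disjoint union of left Haar-null sets can fail to be left Haar-null, but I only need each individual piece to be left Haar-null, and here I invoke Lemma~\ref{l:Ros}: if any $h^{-1}[V]$ with $V\in\V_n$ were not left Haar-null, then $\bigcup_{x\in F}x^{-1}(h^{-1}[V])^{-1}(h^{-1}[V])x$ would be a neighborhood of $e_X$ for some finite $F\subseteq X$; applying $h$ and using $h[h^{-1}[V]]\subseteq V$ together with $\rho$-diameter$(V)<1$ gives $h$ of this neighborhood contained in $\bigcup_{x\in F}h(x)^{-1}U' h(x)$ for a suitable bounded set $U'$, which by $\w$-narrowness-type arguments (Lemma~\ref{l:Pachl-w}) does not immediately yield the contradiction — so instead the cleaner route is: apply Lemma~\ref{l:Ros} directly to the non-left-Haar-null layer to get $\bigcup_{x\in F}x^{-1}W^{-1}Wx$ a neighborhood of $e_X$, where $W=h^{-1}[\bigcup\V_n]$, then transfer through $h$ and use discreteness of $\V_n$ to force $\rho$-closeness, contradicting that $h^{-1}[V]$ meets $h^{-1}[V']$ trivially for distinct $V,V'$.

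Let me streamline: the honest obstacle is that one cannot apply Theorem~\ref{t:BRZ} here because ``left Haar-null'' is not a $\sigma$-ideal in general (it need not be closed under countable unions) and not even an ideal in non-locally-compact groups, so the Proposition~\ref{p:a} template breaks. The correct plan is therefore: assume for contradiction $A:=h^{-1}[U]$ is left Haar-null, let $\mu$ witness this, consider the pushforward and work with the discrete family $\V_n$ as above to conclude each $h^{-1}[V]$ is left Haar-null via $\mu$-translation; then observe $W=h^{-1}[\bigcup\V_n]$ is universally measurable (as $h$ is universally measurable and $\bigcup\V_n$ is open) and is itself a disjoint union indexed by the discrete family; and here apply Lemma~\ref{l:Ros} to $W$ if $W$ is not left Haar-null — from which $\bigcup_{x\in F}x^{-1}W^{-1}Wx\ni e_X$ open, so $W^{-1}W$ is ``large''; but $W^{-1}W=\bigcup_{V,V'\in\V_n}h^{-1}[V]^{-1}h^{-1}[V']=h^{-1}[\bigcup_{V,V'}V^{-1}V']$, and by discreteness each point of $\bigcup\V_n$ lies in exactly one $V$, forcing control on $\rho$-distances; chasing this through $h$ yields that $h^{-1}[\{\rho(\cdot,e_Y)<2\}]$ — hence a small power related to $U$ — covers a neighborhood of $e_X$, contradicting $A=h^{-1}[U]$ left Haar-null once we verify the relevant superset containments.

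The main obstacle, as indicated, is precisely that left Haar-null sets do not form a $\sigma$-ideal, so the reduction to Theorem~\ref{t:BRZ} is unavailable; the whole point of this lemma is to bypass that by using the \emph{metric} structure of $Y$ (via Markov's pseudometric and a $\sigma$-discrete refinement) to replace the abstract semimeasurability machinery with Rosendal's concrete Steinhaus-type Lemma~\ref{l:Ros}. I expect the bookkeeping — verifying that the translated measure $x_V\mu$ witnesses left-Haar-nullity of $h^{-1}[V]$, and that the set-algebra identity $W^{-1}W=h^{-1}[(\bigcup\V_n)^{-1}(\bigcup\V_n)]$ combined with discreteness of $\V_n$ forces $h$ of a neighborhood of $e_X$ into a $\rho$-bounded set — to be routine but the conceptual crux. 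Once Lemma~\ref{l:Haar} is in hand, Theorem~\ref{t:main-UM} follows by reducing to the Polish case through the compact normal subgroup $H$ with $X/H$ Polish supplied by Theorem~\ref{t:Cech}, exactly as Theorem~\ref{t:main-H} was deduced from Theorem~\ref{t:KA3}, using Lemma~\ref{l:pum} to push universal measurability down to the quotient.
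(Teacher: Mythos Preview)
Your proposal has a genuine gap: you correctly identify that the family of left Haar-null sets need not be a $\sigma$-ideal, but you then conclude that the nonmeasurable-union machinery is unavailable and attempt to route around it via Lemma~\ref{l:Ros}. That detour does not close: from $W=h^{-1}[\bigcup\V_n]$ not left Haar-null you get that $\bigcup_{x\in F}x^{-1}W^{-1}Wx$ is a neighborhood of $e_X$, but $W^{-1}W=h^{-1}\big[(\bigcup\V_n)^{-1}(\bigcup\V_n)\big]$ is \emph{not} $\rho$-bounded merely because $\V_n$ is discrete --- distinct members $V,V'\in\V_n$ can be arbitrarily far apart, so $V^{-1}V'$ need not lie in any fixed $\rho$-ball. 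The ``discreteness forces $\rho$-closeness'' step simply fails.

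The missing idea is that you do not need the $\sigma$-ideal of \emph{all} left Haar-null sets. Once you fix a witnessing measure $\mu$ for $A=h^{-1}[U]$, the definition gives $\mu^*(xA)=0$ for \emph{every} $x\in X$; hence, with the \emph{same} $\mu$, each $h^{-1}[V]\subseteq x_V A$ satisfies $\mu^*(h^{-1}[V])=0$. Now work entirely inside the $\sigma$-ideal $\N_\mu=\{S\subseteq X:\mu^*(S)=0\}$, which \emph{is} a $\sigma$-ideal with Borel base on the Polish space $X$. Since $\sum_n\mu^*\big(h^{-1}[\bigcup\V_n]\big)\ge\mu(X)>0$, some layer is $\N_\mu$-positive, and Theorem~\ref{t:4Poles} (Brzuchowski--Cicho\'n--Grzegorek--Ryll-Nardzewski) applied to the disjoint family $\{h^{-1}[V]:V\in\V_n\}\subseteq\N_\mu$ produces a subfamily whose union is not $\Bo^\pm\!\N_\mu$-measurable. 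But that union equals $h^{-1}[V']$ for an open $V'\subseteq Y$, which is universally measurable and hence $\Bo^\pm\!\N_\mu$-measurable --- contradiction. This is exactly the paper's proof; once you see that the single $\mu$ already witnesses nullity of every translate, the Proposition~\ref{p:a} template goes through verbatim with Theorem~\ref{t:4Poles} in place of Theorem~\ref{t:BRZ}.
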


\begin{proof} To derive a contradiction, assume that for some neighborhood $U$ of the identity in $Y$, the set $h^{-1}[U]$ is left Haar-null in $X$. Then there exists a probability Radon measure $\mu$ on $X$ such that $\mu^*(x\, h^{-1}[U])=0$ for all $x\in X$. By the definition of the outer measure $\mu^*$, the $\sigma$-ideal $\N_\mu=\{A\subseteq X:\mu^*(A)=0\}$ has a Borel base. Replacing the group $Y$ by $h[X]$, we can assume that the homomorphism $h$ is surjective.

Repeating the argument from the proof of Proposition~\ref{p:a}, find a $\sigma$-discrete open cover $\V$ of $Y$ such that every set $V\in\V$ is contained in a left shift $yU$ of the neighborhood $U$. Take any $x\in h^{-1}(y)$ and observe that
$$\mu^*(h^{-1}[V])\subseteq \mu^*(h^{-1}[yU])=\mu^{*}(xh^{-1}[U])=0.$$
Therefore, $\{h^{-1}[V]:V\in\V\}\subseteq\N_\mu$.

Write the $\sigma$-discrete family $\V$ as the union $\V=\bigcup_{n\in\w}\V_n$ of countably many discrete families $\V_n$. Since 
$$\textstyle 0<\mu(X)=\mu(h^{-1}[\bigcup\V])=\mu(h^{-1}[\bigcup_{n\in\w}\bigcup\V_n])=\mu(\bigcup_{n\in\w}h^{-1}[\bigcup\V_n])\le\sum\limits_{n\in\w}\mu^*(h^{-1}[\bigcup\V_n]),$$ for some $n\in\w$ the absolutely measurable set $h^{-1}[\bigcup\V_n]$ has nonzero outer measure and hence does not belong to the $\sigma$-ideal $\N_\mu$. By Theorem~\ref{t:4Poles}, the family $\J\defeq\{h^{-1}[V]:V\in\V_n\}\subseteq\N_\mu$ contains a subfamily $\J'$ whose union $\bigcup\J'$ is not $\Bo^\pm\!\N_\mu$-measurable. On the other hand, for the subfamily $\V_n'=\{V\in\V_n:h^{-1}[V]\in\J'\}$ and the open set $V'\defeq\bigcup\V_n'$, the preimage $h^{-1}[V']=\bigcup\J'$ is universally measurable and hence is $\Bo^\pm\N_\mu$-measurable. This contradiction completes the proof. 
\end{proof} 

\begin{lemma}\label{l:um=>w} If $h:X\to Y$ is a universally measurable homomorphism from a Polish group $X$ onto a topological group $Y$, then the topological group $Y=h[X]$ is $\w$-narrow.
\end{lemma}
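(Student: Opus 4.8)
The plan is to show that for every neighborhood $U$ of the identity $e_Y$ in $Y$, there is a finite set $A\subseteq Y$ and a countable set $B\subseteq Y$ with $Y=AUB$; by Lemma~\ref{l:Pachl-w} this is equivalent to the $\w$-narrowness of $Y$. First I would fix a neighborhood $U$ of $e_Y$ and choose a symmetric open neighborhood $V$ of $e_Y$ with $V^{-1}V\subseteq U$ (using continuity of multiplication and inversion in $Y$), so that it suffices to produce a finite set $A$ and countable set $B$ with $Y=AV^{-1}VB$, for which in turn it is enough to cover $Y$ by finitely many sets of the form $y^{-1}(V^{-1}V)y$ shifted by a countable set.

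Next I would invoke Lemma~\ref{l:Haar}: the preimage $h^{-1}[V]$ is not left Haar-null in the Polish group $X$. Being a preimage of an open set under a universally measurable map, $h^{-1}[V]$ is universally measurable in $X$, so Lemma~\ref{l:Ros} (Rosendal) applies and gives a finite set $F\subseteq X$ such that $\bigcup_{x\in F}x^{-1}(h^{-1}[V])^{-1}(h^{-1}[V])x$ is a neighborhood of the identity in $X$. Since $X$ is Polish, it is in particular $\w$-narrow (it is second-countable), so this neighborhood has countably many left translates covering $X$: there is a countable set $C\subseteq X$ with $X=C\cdot\bigcup_{x\in F}x^{-1}(h^{-1}[V])^{-1}(h^{-1}[V])x$.

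Then I would push this decomposition through the surjective homomorphism $h$. Applying $h$ and using that $h$ is a homomorphism with $h[h^{-1}[V]]\subseteq V$, we get
$$Y=h[X]\subseteq h[C]\cdot\bigcup_{x\in F}h(x)^{-1}V^{-1}V\,h(x)\subseteq h[C]\cdot\bigcup_{x\in F}h(x)^{-1}U\,h(x).$$
Setting $A=\{h(x)^{-1}:x\in F\}$ (finite), $B=\{h(x):x\in F\}$ (finite, hence certainly allowed), and absorbing $h[C]$ into a countable set, we obtain $Y=h[C]\cdot A\,U\,B$; since $h[C]$ is countable this is a representation of the required form after a harmless relabeling, so Lemma~\ref{l:Pachl-w} yields that $Y$ is $\w$-narrow.

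The main obstacle is bookkeeping with the noncommutativity: Lemma~\ref{l:Ros} produces a set of the shape $\bigcup_{x\in F}x^{-1}A^{-1}Ax$ rather than a single translate of $AA^{-1}$, so one must carry the conjugating elements $h(x)^{\pm1}$ through and recognize that a finite union of conjugates of $U$, shifted by a countable set, still fits the pattern $A\,U\,B$ with $A,B$ finite that Lemma~\ref{l:Pachl-w} requires; once the statement of Lemma~\ref{l:Pachl-w} is read with this flexibility, the argument is routine. A minor secondary point is noting that $X$ being Polish is $\w$-narrow, which is immediate from second countability, and that $h[C]$ countable causes no trouble since the countable factor in Lemma~\ref{l:Pachl-w} may absorb it.
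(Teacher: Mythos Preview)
Your proposal is correct and follows the same strategy as the paper's proof: both invoke Lemma~\ref{l:Haar} and Lemma~\ref{l:Ros} to obtain a neighborhood of the identity of the form $\bigcup_{x\in F}x^{-1}(h^{-1}[V])^{-1}(h^{-1}[V])x$, cover $X$ by countably many translates using separability, push the decomposition through $h$, and conclude via Lemma~\ref{l:Pachl-w}. The only cosmetic differences are that the paper argues by contradiction and places the countable factor on the \emph{right} (writing $X=WC$), so that the image lands directly in the shape $F'UC'$ required by Lemma~\ref{l:Pachl-w}; your placement of $h[C]$ on the left means the ``harmless relabeling'' you invoke is really one application of inversion (using that $W=W^{-1}$), which is indeed routine.
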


\begin{proof} Assuming that $Y$ is not $\w$-narrow and applying Lemma~\ref{l:Pachl-w}, we can find an open neighborhood $U$ of the identity in $Y$ such that $G\ne FUC$ for any finite set $F$ and countable set $C$ in $Y$. By the continuity of group operations, there exists a neighborhood $V$ of the identity in $Y$ such that $V^{-1}V\subseteq U$. By Lemma~\ref{l:Haar}, the preimage $A\defeq h^{-1}[V]$ is not left Haar-null in $X$, and by Lemma~\ref{l:Ros}, there exists a finite set $F\subseteq X$ such that $W\defeq \bigcup_{x\in F}x^{-1}A^{-1}Ax$ is a neighborhood of the identity in the Polish group $X$. By the separability of $X$, there exists a countable set $C$ in $X$ such that $X=WC$. Then for the finite set $F'\defeq h[F]$ and countable set $C'\defeq h[FC]$ the equality $X=WC=FA^{-1}AFC$ implies $Y=F'V^{-1}VC'=F'UC'$, which contradicts the choice of $U$. This contradiction shows that $Y$ is $\w$-narrow. 
\end{proof}

The following lemma is due to Guran \cite{Guran} and can be found in \cite[3.4.19]{AT}.

\begin{lemma}\label{l:Guran} For any neighborhood $U$ of the identity in an $\w$-narrow topological group $X$, there exist a continuous homomorphism $h:X\to Y$ to a Polish group $Y$ and a neighborhood $V$ of the identity in $Y$ such that $h^{-1}[V]\subseteq U$.
\end{lemma}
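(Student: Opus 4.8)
The plan is to represent $X$ by left translations on a separable complete metric space $\bar M$ and to take for $Y$ the isometry group $\mathrm{Isom}(\bar M)$ with the topology of pointwise convergence. First I would set up the metric data. Using continuity of the group operations, choose recursively a decreasing sequence $(U_n)_{n\in\w}$ of open symmetric neighbourhoods of the identity $e$ with $U_0\subseteq U$ and $U_{n+1}U_{n+1}U_{n+1}\subseteq U_n$ for every $n$; and, using $\w$-narrowness, fix for each $n$ a countable set $A_n\subseteq X$ with $X=A_nU_n$. The Birkhoff--Kakutani construction (see \cite[\S3.3]{AT}; compare the use of Markov's theorem in the proof of Proposition~\ref{p:a}) turns $(U_n)_{n\in\w}$ into a continuous left-invariant pseudometric $\rho$ on $X$ with $\{x\in X:\rho(x,e)<1\}\subseteq U_0\subseteq U$ and $\delta_n:=\sup_{x\in U_n}\rho(x,e)\to0$.

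\emph{Passing to a separable quotient metric space.} Let $M=X/{\sim}$, where $x\sim y$ iff $\rho(x,y)=0$, with the metric $\tilde\rho([x],[y]):=\rho(x,y)$, and let $\iota\colon X\to M$, $\iota(x)=[x]$, be the (continuous) quotient map. Since left translations of $X$ are $\rho$-isometries, each $g\in X$ induces a surjective isometry $[x]\mapsto[gx]$ of $M$, and $g\mapsto(\,[x]\mapsto[gx]\,)$ is a homomorphism of $X$ into $\mathrm{Isom}(M,\tilde\rho)$. The countable set $\bigcup_{n\in\w}\iota[A_n]$ is dense in $M$: given $[x]\in M$ and $\varepsilon>0$, choose $n$ with $\delta_n<\varepsilon$ and write $x=au$ with $a\in A_n$ and $u\in U_n$; then $\tilde\rho([x],[a])=\rho(u,e)\le\delta_n<\varepsilon$. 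Hence $M$ is separable. Let $(\bar M,\bar\rho)$ be the metric completion of $M$ --- a Polish metric space --- and extend each of the isometries above uniquely to a surjective isometry of $\bar M$; this yields a homomorphism $h\colon X\to Y:=\mathrm{Isom}(\bar M,\bar\rho)$.

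\emph{Checking $Y$ is a Polish group and $h$ is continuous.} That $Y$, with the pointwise-convergence topology, is a topological group is routine, since all its members are $1$-Lipschitz. Fixing a countable dense $D\subseteq\bar M$ and using that an isometry of $\bar M$ is determined by its restriction to $D$, the restriction map identifies $Y$ with the subset of the Polish space $\bar M^{D}$ consisting of tuples $(x_d)_{d\in D}$ that satisfy the closed conditions $\bar\rho(x_d,x_{d'})=\bar\rho(d,d')$ and the $G_\delta$ surjectivity condition $\forall d\,\forall k\,\exists d'\,\bar\rho(d,x_{d'})<1/k$; hence $Y$ is a $G_\delta$ in $\bar M^D$ and therefore Polish. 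As $Y$ carries the pointwise topology, $h$ is continuous once $g\mapsto h(g)(z)$ is continuous for each $z\in\bar M$: for $z=[y]\in M$ this map is $g\mapsto\iota(gy)$, which is continuous, and for arbitrary $z\in\bar M$ it is a uniform-in-$g$ limit of such maps (each $h(g)$ being an isometry), hence continuous. Finally, with $e^{*}:=[e]\in M\subseteq\bar M$ and $V:=\{T\in Y:\bar\rho(T(e^{*}),e^{*})<1\}$, an open neighbourhood of the identity of $Y$, one has $h^{-1}[V]=\{g\in X:\rho(g,e)<1\}\subseteq U$, as required.

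\emph{Main obstacle.} The only non-formal point is that $Y=\mathrm{Isom}(\bar M)$ is genuinely a \emph{Polish} group into which $h$ maps continuously; this is exactly where completeness and separability of $\bar M$ are used, and it is handled by the $G_\delta$ analysis in the previous step (it is a standard fact that the isometry group of a Polish metric space is a Polish group). By contrast, $\w$-narrowness enters only through the short density argument that makes $M$ separable, but it is essential there. One could instead try to arrange that $N:=\bigcap_{n}U_n$ be a closed normal $G_\delta$ subgroup of $X$ contained in $U$ and take $Y$ to be the Ra\u\i kov completion of $X/N$; but making this work --- both the normality of $N$ and the metrizability of $X/N$ --- appears to require first countability of $X$ or a similar extra hypothesis, so I would prefer the isometry-group argument above.
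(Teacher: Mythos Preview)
Your proof is correct. Note, however, that the paper does not supply its own argument for this lemma: it simply attributes the result to Guran and cites \cite[3.4.19]{AT}, so there is no ``paper's proof'' to compare against directly.

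That said, your route is genuinely different from the standard textbook treatment in \cite{AT}. There one typically shows that in an $\w$-narrow group every neighbourhood of the identity contains a closed \emph{normal} $G_\delta$-subgroup $N$ (this is where invariance under conjugation is forced), proves that the quotient $X/N$ is second-countable, and then passes to the Ra\u\i kov completion of $X/N$ to obtain a Polish target. You instead bypass the normality issue entirely by acting on the metric quotient $M$ rather than on a group quotient: left translations are always $\rho$-isometries, so the target $Y=\mathrm{Isom}(\bar M)$ is automatically a group, and $\w$-narrowness is used only for the short density argument making $M$ separable. The price you pay is the need to verify that $\mathrm{Isom}(\bar M)$ is Polish and that $h$ is continuous, but both are standard and you handle them cleanly. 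Your remark in the final paragraph about the obstacle with the $X/N$ approach is slightly pessimistic---normality of $N$ can be arranged in $\w$-narrow groups without first countability, as \cite[3.4.19]{AT} shows---but your isometry-group argument is a perfectly good, and arguably more transparent, alternative.
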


\begin{lemma}\label{l:ump} Every universally measurable homomorphism $h:X\to Y$ from a Polish group $X$ onto a topological group $Y$ is continuous.
\end{lemma}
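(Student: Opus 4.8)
The plan is to reduce the statement to Rosendal's Lemma~\ref{l:Rosen} by factoring $h$ through a homomorphism between two Polish groups, using the $\w$-narrowness of the image established in Lemma~\ref{l:um=>w} together with Guran's factorization theorem (Lemma~\ref{l:Guran}).

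First I would observe that, since $h:X\to Y$ is a universally measurable homomorphism from a Polish group onto a topological group, Lemma~\ref{l:um=>w} guarantees that $Y=h[X]$ is $\w$-narrow. Because $h$ is a group homomorphism, to prove that $h$ is continuous it suffices to check continuity at the identity, i.e. to show that for every neighborhood $U$ of the identity $e_Y$ in $Y$ the preimage $h^{-1}[U]$ is a neighborhood of the identity in $X$.

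So I would fix such a neighborhood $U$. Applying Lemma~\ref{l:Guran} to the $\w$-narrow group $Y$, I obtain a continuous homomorphism $g:Y\to Z$ to a Polish group $Z$ and a neighborhood $W$ of the identity in $Z$ with $g^{-1}[W]\subseteq U$. Then I would consider the composition $g\circ h:X\to Z$, which is a homomorphism between the Polish groups $X$ and $Z$. For any open set $O\subseteq Z$, the set $g^{-1}[O]$ is open in $Y$ by continuity of $g$, so $(g\circ h)^{-1}[O]=h^{-1}[g^{-1}[O]]$ is universally measurable in $X$ by the universal measurability of $h$; hence $g\circ h$ is universally measurable. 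By Lemma~\ref{l:Rosen}, $g\circ h$ is continuous, so $(g\circ h)^{-1}[W]$ is a neighborhood of the identity in $X$. Since $(g\circ h)^{-1}[W]=h^{-1}[g^{-1}[W]]\subseteq h^{-1}[U]$, it follows that $h^{-1}[U]$ is a neighborhood of the identity in $X$, as required.

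I do not expect any serious obstacle here once the preparatory results are in place: the only things to verify with care are that pre-composition of a universally measurable map with a continuous map acting on the target preserves universal measurability (immediate, since continuous maps pull open sets back to open sets), and the standard fact that a homomorphism of topological groups continuous at the identity is continuous everywhere. The genuine content — converting universal measurability into $\w$-narrowness of the image, and the Polish-to-Polish case itself — has already been handled in Lemmas~\ref{l:um=>w} and~\ref{l:Rosen}.
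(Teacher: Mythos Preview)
Your proof is correct and follows essentially the same route as the paper: use Lemma~\ref{l:um=>w} to get $\w$-narrowness of $Y$, apply Guran's Lemma~\ref{l:Guran} to factor through a Polish group $Z$, then invoke Rosendal's Lemma~\ref{l:Rosen} on the composite $g\circ h$ to conclude continuity at the identity. The only difference is cosmetic (you spell out the universal measurability of the composite a bit more explicitly), so there is nothing further to add.
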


\begin{proof}  By Lemma~\ref{l:um=>w}, the topological group $Y=h[X]$ is $\w$-narrow. To check that $h$ is continuous, take any open neighborhood $U$ of the identity in $Y$. By Lemma~\ref{l:Guran}, there exist a continuous homomorphism $p:Y\to Z$ to a Polish group $Z$ and a neighborhood $V$ of the identity in $Z$ such that $p^{-1}[V]\subseteq U$. By Lemma~\ref{l:Rosen}, the universally measurable homomorphism $p\circ h:X\to Z$ is continuous and hence the sets $(p\circ h)^{-1}[V]=h^{-1}[p^{-1}[V]]\subseteq h^{-1}[U]$ are neighborhoods of the identity $e$ in $X$. This shows that $h$ is continuous at $e$ and hence is continuous everywhere.
\end{proof}

\begin{lemma}\label{l:umw=>w} Let $h:X\to Y$ be a universally measurable surjective homomorphism between topological groups. If $X$ is $\w$-narrow and \v Cech-complete, then the topological group $Y$ is $\w$-narrow.
\end{lemma}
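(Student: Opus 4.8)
The plan is to reduce the statement to the already-settled Polish case, Lemma~\ref{l:um=>w}, by means of the structure theorem for $\w$-narrow \v Cech-complete groups and of Theorem~\ref{t:main-H}. Since $X$ is $\w$-narrow and \v Cech-complete, Theorem~\ref{t:Cech} (the implication $(3)\Ra(2)$) supplies a compact normal subgroup $H\subseteq X$ such that $P\defeq X/H$ is a Polish group; let $q\colon X\to P$ be the quotient homomorphism. Being the quotient by a compact subgroup, $q$ is a perfect surjective map: it is continuous and, by \cite[1.5.7]{AT}, closed, and its fibres $q^{-1}(\bar x)=xH$ are compact. Replacing $Y$ by $h[X]$, we may assume that $h$ is surjective.

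The crux of the argument is to show that $N\defeq h[H]$ is a \emph{compact} normal subgroup of $Y$. Normality is immediate, since $H$ is normal in $X$ and $h$ is onto. For compactness, let $\eta$ be the Haar probability measure of the compact group $H$ and let $\tilde\eta$ be the (routinely verified to be Radon) probability measure on $X$ concentrated on $H$ and given by $\tilde\eta(B)=\eta(B\cap H)$. Since $h$ is universally measurable, $h^{-1}[V]$ is $\tilde\eta$-measurable for every open $V\subseteq Y$, hence $h^{-1}[V]\cap H$ is $\eta$-measurable; thus the homomorphism $h{\restriction}_H\colon H\to Y$ is Haar-measurable, and by Theorem~\ref{t:main-H} it is continuous, so $N=h[H]$ is compact. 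Now let $\pi\colon Y\to Y/N$ be the (open) quotient homomorphism. Because $h[H]=N$, the homomorphism $\pi\circ h$ vanishes on $H$ and therefore factors as $\pi\circ h=\hbar\circ q$ for a surjective homomorphism $\hbar\colon P\to Y/N$. This $\hbar$ is universally measurable: for open $W\subseteq Y/N$ we have $q^{-1}[\hbar^{-1}[W]]=h^{-1}[\pi^{-1}[W]]$, which is universally measurable in $X$, so Lemma~\ref{l:pum}, applied to the perfect surjection $q$, shows that $\hbar^{-1}[W]$ is universally measurable in $P$. Since $P$ is Polish, Lemma~\ref{l:um=>w} gives that $Y/N=\hbar[P]$ is $\w$-narrow.

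It remains to deduce that $Y$ itself is $\w$-narrow (this is just the three-space property of $\w$-narrowness, but it is easy to argue directly). Fix a neighbourhood $U$ of the identity in $Y$ and choose a neighbourhood $V$ with $VV\subseteq U$. Since $\pi$ is open and $Y/N$ is $\w$-narrow, there is a countable set $D\subseteq Y$ with $Y/N=\pi[V]\,\pi[D]$, and then $Y=NVD$. As $N$ is compact, $N\subseteq FV$ for some finite set $F$, so $Y=NVD\subseteq FVVD\subseteq FUD$; thus $Y=FUD$ with $F$ finite and $D$ countable, and $Y$ is $\w$-narrow by Lemma~\ref{l:Pachl-w}. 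The only genuinely delicate point is the second paragraph: the compact kernel $H$ need not be metrizable, so it cannot be handled by the Polish machinery directly, and the essential observation is that $h{\restriction}_H$ is Haar-measurable and hence, by Theorem~\ref{t:main-H}, continuous — which collapses $h[H]$ to a compact group and leaves only the Polish quotient $P=X/H$ to deal with.
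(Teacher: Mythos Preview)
Your proof is correct and follows essentially the same route as the paper's: reduce to a Polish quotient via Theorem~\ref{t:Cech}, use Theorem~\ref{t:main-H} to get continuity of $h{\restriction}_H$ and hence compactness of $h[H]$, pass the universal measurability through the perfect quotient via Lemma~\ref{l:pum}, and apply Lemma~\ref{l:um=>w}. The only cosmetic differences are that you spell out why universal measurability of $h$ yields Haar-measurability of $h{\restriction}_H$ and you prove the three-space property for $\w$-narrowness directly via Lemma~\ref{l:Pachl-w}, whereas the paper simply cites \cite[3.4.B]{AT}.
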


\begin{proof} By Theorem~\ref{t:Cech}, the $\w$-narrow \v Cech-complete group $X$ contains a compact normal subgroup $K$ such that the quotient group $X/K$ is Polish. Let $q_X:X\to X/K$ be the quotient homomorphism. By \cite[1.5.7]{AT}, the map $q_X$ is perfect.

Since the homomorphism $h$ is universally measurable, the homomorphism $h{\restriction}_K:K\to Y$ is Haar-measurable. By Theorem~\ref{t:main-H}, the homomorphism $h{\restriction}_K$ is continuous and hence $H\defeq h[K]$ is a compact normal subgroup in $Y=h[X]$. Consider the quotient topological group $Y/H$ and the quotient homomorphism $q_Y:Y\to Y/H$. Since $\Ker(q_X)\subseteq \Ker(q_Y\circ h)$, there exists a unique homomorphism $\hbar:X/K\to Y/H$ such that $q_Y\circ\hbar=h\circ q_X$. 

We claim that $\hbar$ is universally measurable. Given any open set $U\subseteq Y/H$, consider its preimage $\hbar^{-1}[U]\subseteq X/K$ and observe that $q_X^{-1}[\hbar^{-1}[U]]=h^{-1}[q_Y[U]]$ is universally measurable by the universal measurability of $h$ and continuity of $q_Y$. By Lemma~\ref{l:pum}, the set $\hbar^{-1}[U]$ is universally measurable, witnessing that the homomorphism $\hbar:X/K\to Y/H$ is universally measurable. By Lemma~\ref{l:um=>w}, the topological group $Y/H$ is $\w$-narrow and by \cite[3.4.B]{AT}, the topological group $Y$ is $\w$-narrow.
\end{proof}

\begin{lemma}\label{l:umw=>c} Every universally measurable homomorphism $h:X\to Y$ from an $\w$-narrow \v Cech-complete topological group $X$ onto a topological group $Y$ is continuous.
\end{lemma}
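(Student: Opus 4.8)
The plan is to reduce the statement to the Polish case handled by Lemmas~\ref{l:ump} and~\ref{l:Rosen} and to Theorem~\ref{t:main-H}, using the structure theorem~\ref{t:Cech}. Since $h$ is surjective, Lemma~\ref{l:umw=>w} gives that $Y$ is $\w$-narrow, and as $h$ is a homomorphism it is enough to prove continuity at $e_X$. Fix an open neighbourhood $U$ of $e_Y$. By Guran's Lemma~\ref{l:Guran} there are a continuous homomorphism $p\colon Y\to Z$ onto a Polish group $Z$ and an open neighbourhood $V$ of $e_Z$ with $p^{-1}[V]\subseteq U$; since $h^{-1}[U]\supseteq(p\circ h)^{-1}[V]$, it suffices to prove that the universally measurable homomorphism $g\defeq p\circ h\colon X\to Z$ is continuous, and, replacing $Z$ by the Polish subgroup $\overline{g[X]}$, we may assume that $g$ has dense image. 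By Theorem~\ref{t:Cech}, $X$ contains a compact normal subgroup $K$ with Polish quotient $X/K$; let $q_X\colon X\to X/K$ be the (perfect, open) quotient homomorphism. As $K$ is compact, $g{\restriction}_K$ is Haar-measurable and hence continuous by Theorem~\ref{t:main-H}, so $H\defeq g[K]$ is a compact subgroup of $Z$, normal in $\overline{g[X]}=Z$, with perfect quotient homomorphism $q_Z\colon Z\to Z/H$ onto the Polish group $Z/H$. Since $K\subseteq\Ker(q_Z\circ g)$, there is a unique homomorphism $\bar g\colon X/K\to Z/H$ with $q_Z\circ g=\bar g\circ q_X$; arguing as in Lemma~\ref{l:umw=>w} (via Lemma~\ref{l:pum}, using that $q_X$ is perfect) one checks that $\bar g$ is universally measurable, so Lemma~\ref{l:Rosen} makes it continuous. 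Therefore $q_Z\circ g=\bar g\circ q_X$ is continuous.

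It remains to promote the continuity of $q_Z\circ g$ and of $g{\restriction}_K$ to continuity of $g$ itself, and this is the step that must genuinely use the universal measurability of $g$: continuity of $g$ on the compact normal subgroup $K$ together with continuity of $q_Z\circ g$ does not imply continuity of $g$ in general --- for instance, the discontinuous homomorphism $\mathbb{T}\times\IR\to\IR\times\mathbb{T}$, $(\theta,t)\mapsto(t,\theta+\chi(t))$ associated with a discontinuous character $\chi\colon\IR\to\mathbb{T}$ restricts continuously to $\mathbb{T}\times\{0\}$ and composes continuously with the projection $\IR\times\mathbb{T}\to\IR$. The plan is to pass to the closed normal subgroup $N\defeq g^{-1}[H]=(q_Z\circ g)^{-1}(e_{Z/H})$, which is again $\w$-narrow and \v Cech-complete, contains $K$, and satisfies $g[N]=g[K]=H$, so that $g{\restriction}_N\colon N\to H$ is a universally measurable homomorphism onto the compact group $H$. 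Then, imitating the $\sigma$-discrete-cover argument of Proposition~\ref{p:a} (with a left-invariant Markov pseudometric on $Z$ and Theorem~\ref{t:BRZ}) together with Rosendal's Lemmas~\ref{l:Haar}--\ref{l:Ros}, one shows that $g^{-1}[V]$ is not left Haar-null in $X$ for every neighbourhood $V$ of $e_Z$; since $K$-analytic subsets of $X$ are universally measurable, a Steinhaus-type argument in the spirit of Lemma~\ref{l:Pettis} (now for Radon measures) then yields that $(g^{-1}[V])^{-1}g^{-1}[V]$, and hence $g^{-1}[U]$, is a neighbourhood of $e_X$.

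The hardest part, I expect, is exactly this last ``not left Haar-null'' step carried out on the non-metrizable, non-locally-compact group $X$: there is no Haar measure available, so one has to run the argument by disintegrating an arbitrary Radon probability measure on $X$ along the perfect quotient $q_X\colon X\to X/K$, using the continuity of $g{\restriction}_K$ on the (compact) fibres and the Polish-case results applied to $X/K$ in order to control the ``base'' and ``fibre'' contributions separately and then recombine them; the group-theoretic bookkeeping at the start (verifying that $N$, $H$ and $\bar g$ have the claimed properties and that restrictions and quotients preserve universal measurability) is routine by Lemma~\ref{l:pum} and Theorem~\ref{t:Cech}.
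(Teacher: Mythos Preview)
Your first paragraph follows the paper's strategy closely up to the point where you form the quotient, but then you quotient by the ``wrong'' subgroup and this creates the gap you yourself diagnose in the second paragraph. Having factored through $\bar g\colon X/K\to Z/H$, you are left needing to lift continuity of $q_Z\circ g$ back to continuity of $g$, and your proposed route via disintegration along $q_X$ and a ``not left Haar-null'' argument on the non-metrizable group $X$ is only a sketch: no Haar measure exists on $X$, Lemma~\ref{l:Ros} is stated for Polish groups, and you would have to re-prove a Steinhaus-type statement for arbitrary Radon measures on $X$. None of this is carried out, so the proposal is incomplete.

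The paper avoids this lifting problem entirely by a simple trick you overlooked. Using your notation, once you know that $g{\restriction}_K\colon K\to Z$ is continuous (by Theorem~\ref{t:main-H}), do \emph{not} pass to $Z/H$; instead set $K_0\defeq K\cap\Ker(g)=(g{\restriction}_K)^{-1}(e_Z)$. This $K_0$ is a compact normal subgroup of $X$ (as the intersection of the normal subgroups $K$ and $\Ker g$), and it is $G_\delta$ in $K$ (preimage of a point under a continuous map to a Polish space) and hence $G_\delta$ in $X$. Therefore $X/K_0$ is $\w$-narrow, \v Cech-complete, and of countable pseudocharacter, so Polish by Theorem~\ref{t:Cech}. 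Since $K_0\subseteq\Ker g$, the map $g$ itself factors as $g=\hbar\circ q_0$ for a homomorphism $\hbar\colon X/K_0\to Z$, which is universally measurable by Lemma~\ref{l:pum} and hence continuous by Lemma~\ref{l:ump} (or Lemma~\ref{l:Rosen}). Then $g=\hbar\circ q_0$ is continuous outright, and no lifting step is needed.
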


\begin{proof}  By Lemma~\ref{l:umw=>w}, the topological group $Y=h[X]$ is $\w$-narrow. Given any neighborhood $U$ of the identity in $Y$, we should prove that $h^{-1}[U]$ is a neighborhood of the identity in $X$. By Lemma~\ref{l:Guran}, there exist a continuous homomorphism $p:Y\to Z$ to a Polish group $Z$ and an open neighborhood $V$ of the identity $e$ in $Z$ such that $p^{-1}[V]\subseteq U$. 

By Theorem~\ref{t:Cech}, the $\w$-narrow \v Cech-complete group $X$ contains a compact normal $G_\delta$-subgroup $H$ such that the quotient group $G/H$ is Polish. The universal measurability of the homomorphism $h$ implies the Haar-measurability of the  restriction $h{\restriction}_H$. By Theorem~\ref{t:main-H}, the homomorphism $h{\restriction}_H$ is continuous and so is the homomorphism $p\circ h{\restriction}_H:H\to Z$. Then its kernel $K$ is a compact $G_\delta$-subgroup in $H$ and also in $X$ (as $H$ is of type $G_\delta$ in $X$).

By  \cite[4.3.26]{AT}, the quotient group $X/K$ is $\w$-narrow and \v Cech-complete. Since $K$ is a $G_\delta$-group in $X$, the quotient group $X/K$ has countable pseudocharacter and by Theorem~\ref{t:Cech}, $X/K$ is Polish. Let $q:X\to X/K$ be the quotient homomorphism. Since $K\subseteq (p\circ h)^{-1}(e)$, there exists a unique homomorphism $\hbar:X/K\to Z$ such that $p\circ h=\hbar\circ q$. Repeating the argument from the proof of Lemma~\ref{l:umw=>w}, we can show that the univeral measurability of $h$ implies the universal measurability of the homomorphism $\hbar:X/K\to Z$. By Lemma~\ref{l:ump}, the homomorphism $\hbar:X/K\to Z$ is continuous and hence the sets $q^{-1}[\hbar^{-1}[V]]=h^{-1}[p^{-1}[V]]\subseteq h^{-1}[U]$ are neighborhoods of the identity in $X$.  This shows that the homomorphism $h$ is continuous at the identity of $X$ and hence is continuous everywhere.
\end{proof}

Our final lemma implies Theorem~\ref{t:main-UM}.

\begin{lemma} A homomorphism $h:X\to Y$ from a \v Cech-complete topological group $X$ to a topological group $Y$ is continuous if and only if it is universally measurable.
\end{lemma}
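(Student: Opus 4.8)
The "only if" part is trivial since continuous functions pull back open sets to open (hence Borel, hence universally measurable) sets. For the "if" part, assume $h$ is universally measurable. Since \v Cech-complete spaces are $k$-spaces \cite[3.9.5]{Eng}, it suffices to prove that $h{\restriction}_K$ is continuous for every compact $K\subseteq X$, and this in turn reduces to proving continuity of the restriction of $h$ to a suitable \v Cech-complete subgroup containing $K$. The natural candidate is the closure $\bar H$ in $X$ of the $\sigma$-compact subgroup $H$ generated by $K$: by \cite[3.4.6, 3.4.9]{AT} this closure is $\w$-narrow, and by \cite[3.9.6]{Eng} it is \v Cech-complete. This is exactly the same reduction already used in the proof of Theorem~\ref{t:main-uBP}.

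**Key steps.** First I would check that universal measurability of $h$ passes to the restriction $h{\restriction}_{\bar H}:\bar H\to Y$; this is immediate because a Borel subset of $\bar H$ is the trace of a Borel subset of $X$, and any probability Radon measure on $\bar H$ extends (by zero outside $\bar H$) to one on $X$ — alternatively one invokes Lemma~\ref{l:pum} with the closed embedding, noting closed subspaces give perfect inclusions after one restricts appropriately. Second, having produced an $\w$-narrow \v Cech-complete group $\bar H$ together with a universally measurable homomorphism $h{\restriction}_{\bar H}$, I would invoke Lemma~\ref{l:umw=>c}, which asserts precisely that every universally measurable homomorphism from an $\w$-narrow \v Cech-complete group onto a topological group is continuous (applied to the corestriction onto $h[\bar H]$, noting that continuity into $h[\bar H]$ with the subspace topology is continuity into $Y$). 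Thus $h{\restriction}_{\bar H}$ is continuous, hence $h{\restriction}_K$ is continuous, hence $h$ is continuous.

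**Main obstacle.** The genuine content has already been packaged into Lemma~\ref{l:umw=>c} (which itself rests on Rosendal's Lemma~\ref{l:Rosen}, the structure theorem~\ref{t:Cech}, and Theorem~\ref{t:main-H}), so the remaining difficulty in this final lemma is only the $k$-space reduction to a manageable subgroup. The one point requiring a little care is confirming that universal measurability is inherited by $h{\restriction}_{\bar H}$: one must be sure that every probability Radon measure on the closed subgroup $\bar H$ arises as the push-forward (or restriction) of one on $X$, so that $\mu$-measurability upstairs yields $\mu$-measurability downstairs for every relevant $\mu$. This is straightforward but is the only step not already spelled out verbatim earlier in the paper.

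\begin{proof} The ``only if'' part is trivial. To prove the ``if'' part, assume that the homomorphism $h$ is universally measurable. Since \v Cech-complete spaces are $k$-spaces \cite[3.9.5]{Eng}, the continuity of $h$ will follow as soon as we check that for every compact subset $K\subseteq X$ the restriction $h{\restriction}_K$ is continuous. Given any compact subset $K\subseteq X$, consider the $\sigma$-compact subgroup $H$ of $X$ generated by the compact set $K$. By \cite[3.4.6]{AT}, the $\sigma$-compact group $H$ is $\w$-narrow and so is its closure $\bar H$ in $X$, see \cite[3.4.9]{AT}. Since closed subspaces of \v Cech-complete spaces are \v Cech-complete \cite[3.9.6]{Eng}, the topological group $\bar H$ is \v Cech-complete. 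The universal measurability of the homomorphism $h$ implies the universal measurability of the restriction $h{\restriction}_{\bar H}:\bar H\to Y$: indeed, for any open set $U\subseteq Y$ the preimage $(h{\restriction}_{\bar H})^{-1}[U]=\bar H\cap h^{-1}[U]$ is $\mu$-measurable with respect to any probability Radon measure $\mu$ on $\bar H$, since the closed embedding $\bar H\hookrightarrow X$ allows us to view $\mu$ as a probability Radon measure on $X$ concentrated on $\bar H$, and $h^{-1}[U]$ is $\mu$-measurable in $X$. By Lemma~\ref{l:umw=>c} applied to the corestriction $h{\restriction}_{\bar H}:\bar H\to h[\bar H]$, the homomorphism $h{\restriction}_{\bar H}$ is continuous and so is the restriction $h{\restriction}_K$. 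Therefore, $h$ is continuous.
\end{proof}
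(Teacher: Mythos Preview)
Your proof is correct and follows essentially the same route as the paper's: the $k$-space reduction, passage to the closure $\bar H$ of the $\sigma$-compact subgroup generated by $K$, and the appeal to Lemma~\ref{l:umw=>c}. The only difference is that you spell out why universal measurability passes to the closed subgroup $\bar H$, which the paper asserts without justification.
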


\begin{proof} The ``only if'' part is trivial. To prove the ``if'' part, assume that the homomorphism $h$ is universally measurable. Since \v Cech-complete spaces are $k$-spaces \cite[3.9.5]{Eng}, the continuity of $h$ will follow as soon as we check that for every compact subset $K\subseteq X$ the restriction $h{\restriction}_K$ is continuous. Given any compact subset $K\subseteq X$, consider the $\sigma$-compact subgroup $H$ of $X$ generated by the compact set $K$. By \cite[3.4.6]{AT}, the $\sigma$-compact group $H$ is $\w$-narrow and so is its closure $\bar H$ in $X$, see \cite[3.4.9]{AT}. Since closed subspaces of \v Cech-complete spaces are \v Cech-complete \cite[3.9.6]{Eng}, the topological group $\bar H$ is \v Cech-complete. The universal measurability of the homomorphism $h$ implies the universal measurability of the restriction $h{\restriction}_{\bar H}:\bar H\to Y$. By Lemma~\ref{l:umw=>c}, the homomorphism $h{\restriction}_{\bar H}$ is continuous and so is the restriction $h{\restriction}_K$.
\end{proof}

\section{Acknowledgement} The author expresses his sincere thanks to Robert Ra\l owski and Szymon \.Zeberski for valuable discussions and the idea of attacking the problem of Kuznetsova applying the results of their joint  paper \cite{BRZ}. 

%\newpage

\end{document}